\documentclass[11pt, reqno]{amsart}

\usepackage{amsthm,amssymb,amstext,amscd,amsfonts,amsbsy,amsxtra,latexsym,amsmath,xcolor,mathrsfs,fancybox,upgreek, soul}
\usepackage[english]{babel}
\usepackage[all,cmtip]{xy}
\usepackage[latin1]{inputenc}
\usepackage{cancel}
\usepackage[draft]{hyperref}
\usepackage{comment,enumitem}
\usepackage{mdframed}
\allowdisplaybreaks

\oddsidemargin = 0cm \evensidemargin = 0cm \textwidth = 6.5in

\newtheorem{theorem}{Theorem}
\newtheorem{lemma}[theorem]{Lemma}

\newtheorem{proposition}[theorem]{Proposition}

\theoremstyle{definition}
\newtheorem{definition}{Definition}

\theoremstyle{remark}

\newtheorem*{remark}{Remark}
\newtheorem*{example}{Example}
\numberwithin{theorem}{section}
\numberwithin{equation}{section}

\newcommand{\N}{\mathbb{N}}
\newcommand{\Z}{\mathbb{Z}}
\newcommand{\R}{\mathbb{R}}
\newcommand{\C}{\mathbb{C}}

\newcommand{\Q}{\mathbb{Q}}

\newcommand{\SL}{{\text {\rm SL}}}

\newcommand{\sgn}{\operatorname{sgn}}

\newcommand{\im}{\textnormal{Im}}

\def\H{\mathbb{H}}
\renewcommand{\pmod}[1]{\  \,  \left( \mathrm{mod} \,  #1 \right)}

\begin{document}

\title[Higher depth quantum modular forms]{Higher depth quantum modular forms, multiple Eichler integrals, and $\frak{sl}_3$ false theta functions}

\author{Kathrin Bringmann, Jonas Kaszian, Antun Milas}

\address{Mathematical Institute, University of Cologne, Weyertal 86-90, 50931 Cologne, Germany}
\email{kbringma@math.uni-koeln.de}

\address{Mathematical Institute, University of Cologne, Weyertal 86-90, 50931 Cologne, Germany}
\email{jkaszian@math.uni-koeln.de}

\address{Department of Mathematics and Statistics, SUNY-Albany, Albany, NY 12222, U.S.A.}
\email{amilas@albany.edu}

\begin{abstract} We introduce and study higher depth quantum modular forms. We construct two families of examples coming from rank two false theta functions, whose ``companions'' in the lower half-plane can be also realized both as double Eichler integrals and as non-holomorphic theta series having values of  ``double error'' functions as coefficients. In particular, we prove that the false theta functions of $\frak{sl}_3$, appearing in the character of the vertex algebra $W^0(p)_{A_2}$, can be written as the sum of two depth two quantum modular forms of positive integral weight.

\end{abstract}

\maketitle

\section{Introduction and statement of results}

In this paper, we study higher depth quantum modular forms which occur as rank two false theta functions coming from characters of the vertex algebra $W^0(p)_{A_2}$ for $p \geq 2$. Via asymptotic expansions we relate these to double Eichler integrals which may be viewed as purely non-holomorphic parts of indefinite theta functions.\\
\indent Let us first recall the classical rank one case. Note that the derivative of a modular form is typically not a modular form (only a so-called quasi-modular form). However, thanks to Bol's identity, differentiating a weight $2-k \in -\N$ modular form $k-1$ times returns a modular form of weight $k$. Thus it is natural to consider holomorphic Eichler integrals. That is, if $f(\tau)=\sum_{m\geq 1} c_f(m)q^m$ ($q:=e^{2\pi i\tau}$ with $\tau \in \mathbb{H}$ throughout) is a modular form of weight $k$, then set
\begin{equation}\label{Eichler1}
\widetilde f(\tau):= \sum_{m\geq 1} \frac{c_f(m)}{m^{k-1}}q^m.
\end{equation}
It easily follows, by Bol's identity and the modularity of $f$, that the following function is annihilated by differentiating $k-1$ times
\begin{equation}\label{PP}
R_f(\tau):=\widetilde f(\tau)-\tau^{k-2} \widetilde f\left(-\frac1{\tau}\right).
\end{equation}
This yields that $R_f$ is a polynomial of degree $k-2$ ($R_f$ is the so called \textit{period polynomial} of $f$). So in particular $R_f$ is much simpler than the starting function $\widetilde f$. Note that $\widetilde f$ may also be written as an integral, namely, up to constants it equals
\begin{equation}\label{holE}
\int_\tau^{i\infty}f(w) (w-\tau)^{k-2}dw.
\end{equation}
Similarly $R_f$ has an integral representation, namely up to constants it equals
$$
\int_0^{i\infty} f(w)(w-\tau)^{k-2}dw.
$$
A similar construction works for \textit{weakly holomorphic modular forms}, i.e., those meromorphic modular forms which may only grow as $v:=\im(\tau)\to\infty$. In this situation, \eqref{holE} needs to be regularized. Moreover, there is a ``companion integral'' (again regularized)

\begin{equation}\label{nonE}
\int_{-\overline{\tau}}^{i\infty} g(w)(w+\tau)^{k-2} dw,
\end{equation}
where $g$ is a certain weakly holomorphic modular form related to $f$
in the sense that the corresponding period polynomial, defined analogously to \eqref{PP},  basically agrees with $R_f$.\\
 \indent In contrast, for half-integral weight modular forms there is no half-derivative and thus Bol's identity does not apply. However, one can formally define the analogue of \eqref{Eichler1}. This was first investigated by Zagier \cite{ZagierVass,ZagierQuantum} in connection to Kontsevich's ``strange'' function
\begin{equation*}\label{kontstrange}K(q):=\sum_{m\geq0}(q;q)_m,\end{equation*}
where for $m\in\N_0\cup\{\infty\}$, $(a;q)_m:=\prod_{j=0}^{m-1}(1-aq^j)$ denotes the usual {\it $q$-Pochhammer symbol}. The function $K(q)$ does not converge on any open subset of $\C$, but converges as a finite sum for $q$ a root of unity. Zagier's study of $K$ depends on the  identity
\begin{equation}
\label{sumoftails}
\displaystyle\sum_{m\geq0}\left(\eta(\tau)-q^{\frac1{24}}\left(q;q\right)_m\right)=\eta(\tau)D\left(\tau\right)+\frac12\widetilde{\eta}(\tau),
\end{equation}
\noindent
with $\eta(\tau):=q^{\frac{1}{24}}(q;q)_{\infty}=\sum_{m\geq 1}(\frac{12}{m})q^{\frac{m^2}{24}}$,  $D(\tau):=-\frac12+\sum_{m\geq1}\frac{q^m}{1-q^m}$ and $\widetilde{\eta}(\tau):=\sum_{m \geq 1} \left(\frac{12}{m}\right)m q^{\frac{m^2}{24}}$,
where $\left(\frac{\,\cdot\,}{\,\cdot\,}\right)$ denotes the extended Jacobi symbol.
The key observation of Zagier is that in (\ref{sumoftails}), the functions $\eta(\tau)$ and $\eta(\tau)D(\tau)$ vanish of infinite order as $\tau\rightarrow\frac hk \in \mathbb{Q}$. So at a root of unity $\zeta$, $K(\zeta)$ is essentially the limiting value of the Eichler integral of $\eta$, which Zagier showed has quantum modular properties. Roughly speaking, Zagier defined ``quantum modular forms'' to be functions $f:\mathcal{Q}\rightarrow\C$ $(\mathcal{Q}\subseteq \Q)$, such that the error of modularity ($M = (\begin{smallmatrix}
a&b\\c&d
\end{smallmatrix})\in\SL_2(\Z)$)
\begin{equation}\label{errormod}
f(\tau)-(c\tau+d)^{-k} f(M\tau)
\end{equation}
is ``nice''. The definition is intentionally vague to include many examples; in this paper we require \eqref{errormod} to be real-analytic. For example, $\widetilde f$ (recall $k\in\mathbb Z$ in this case) is a quantum modular form, since $R_f$ is a polynomial and thus real-analytic. Additional examples appear in the study of limits of quantum invariants of $3$-manifolds and knots \cite{ZagierQuantum}, Kashaev invariants of torus knots/links \cite{Hikami,Hikami2},
and partial theta functions \cite{FOR}.

Motivated in part by vertex operator algebra theory, further (but similar) examples of quantum modular forms
were investigated in the setup of characters of vertex algebra modules in \cite{BM15} and \cite{CMW}. These examples are given by characters of $M_{r,s}$, the atypical irreducible modules of the $(1,p)$-singlet algebra  for $p \geq 2$ \cite{BM15,CM2014}.  For $r=1$
and $1 \leq s \leq p-1$, they take the particularly nice shape
$$
\text{ch}_{M_{1,s}}(\tau)=\frac{F_{p-s,p}(p\tau)}{\eta(\tau)},
$$
where 
$$
F_{j,p}(\tau):=\sum_{m\in\Z}\text{sgn}\left(m+\frac{j}{2p}\right)q^{\left(m+\frac{j}{2p}\right)^2}
$$
is a {\it false theta function}. The function $F_{j,p}$ is called ``false theta'' since getting rid of the $\sgn$-factor yields the theta function $\sum_{m\in\Z}q^{(m+\frac{j}{2p})^2}$, which is a modular form of weight $\frac12$.
The quantum modularity of $F_{j,p}$ is now given by relating it to a non-holomorphic Eichler integral, as in \eqref{nonE}. To be more precise, set
(correcting a typographical error in \cite{BM15})
$$
F_{j,p}^*(\tau):=- \sqrt{2} i \int_{-\overline{\tau}}^{i\infty}\frac{f_{j,p}(w)}{\left(-i(w+\tau)\right)^{\frac12}}dw,
$$
where $f_{j,p}$ is the cuspidal theta function of weight $\frac32$
$$
f_{j,p}(\tau):=\sum_{m\in\Z}\left(m+\frac{j}{2p}\right)q^{\left(m+\frac{j}{2p}\right)^2}.
$$
One can show that $F_{j,p}(\tau)$ agrees for $\tau=\frac{h}{k}$ with $F_{j,p}^*(\tau)$ up to infinite order \cite{BM15}. Quantum modularity then follows by the (mock) modular transformation of $F^\ast_{j, p}$ which we recall in Lemma \ref{lquant} below. By ``mock-modular'', we mean that the occurence of the extra term $r_{f,\frac{d}{c}}$ in Lemma \ref{lquant} prevents the function from being modular. However, there exists a ``modular completion'' in the sense that after multiplying it with a theta function, $F_{j,p}^*$ is the ``purely non-holomorphic part'' of a non-holomorphic theta function corresponding to an indefinite quadratic form (of signature $(1,1)$).
Its modularity now can be proven by using results of Zwegers \cite[Section 2.2]{Zw}.  The functions $\tau\mapsto F_{j,p}(p\tau)$, especially for $p=2$, have appeared in several studies of vertex algebras from different standpoints \cite{Adamovic-Milas,CM2014,GR,KW}.

In this paper we investigate higher-dimensional analogues. For this we consider certain $q$-series appearing in representation theory
of vertex algebras and $W$-algebras. They are sometimes called  {\it higher rank false theta functions} and are thoroughly studied in \cite{BM15,CM}. They appear from extracting the constant term of certain multivariable Jacobi forms \cite{BM15}. The constant term can be interpreted as the character of  the zero weight space of the corresponding Lie algebra representation. In the case of the simple Lie algebra $\frak{sl}_3$, the false theta function takes the following shape ($p \in \mathbb{N}$, $p \geq 2$) 
\begin{equation}\label{thetaLie}
F(q):= \!\!\!\! \sum_{m_1,m_2 \geq 1 \atop{ m_1 \equiv m_2 \pmod{3}}} \!\!\!\!\!\!\!\! {\rm min}(m_1,m_2) q^{\frac{p}{3}\left(m_1^2+m_2^2+m_1m_2\right)-m_1-m_2+\frac1p} \left(1-q^{m_1}\right)\left(1-q^{m_2}\right)\left(1-q^{m_1 + m_2}\right).
\end{equation}
Below we decompose this function as $F(q)=\frac{2}{p}F_1(q^p)+2F_2(q^p)$ with $F_1$ and $F_2$ defined in \eqref{defineF1} and \eqref{defineF2}, respectively.  The function $F_1$ and $F_2$ turn out to have generalized quantum modular properties. This connection goes via an analouge of \eqref{Eichler1}. For instance, we show that $F_1$ asymptotically agrees with an integral of the shape 
$$
\int_{-\overline{\tau}}^{i\infty}\int_{w_1}^{i\infty} \frac{f(w_1,w_2)}{\sqrt{-i(w_1+\tau)}\sqrt{-i(w_2+\tau)}}dw_2dw_1
$$
where $f\in S_{\frac{3}{2}}(\chi_1,\Gamma)\otimes S_{\frac32}(\chi_2,\Gamma)$ ($\chi_j$ are multipliers and $\Gamma\subset\SL_2(\Z)$).
Modular properties follow from the modularity of $f$ which in turn gives quantum modular properties of $F_1$.
The idea is that here the error of modularity \eqref{errormod} is less complicated than the original function.  We call the resulting functions higher depth
quantum modular forms (see Definition \ref{defgen} for a precise definition). Roughly speaking  (see Definition \ref{defgen} for a precise definition), depth two quantum modular forms of weight $k\in \frac12\Z$ satisfy, in the simplest case, the modular transformation property $(M=\left(\begin{smallmatrix} a & b \\ c & d
\end{smallmatrix}\right) \in \SL_2(\Z))$
$$
f(\tau)-(c\tau+d)^{-k}f(M\tau) \in  \mathcal{Q}_{\kappa} (\Gamma)\mathcal O(R) + \mathcal O(R)
$$
for some $\kappa\in\frac12\mathbb Z$, where  $\mathcal{Q}_{\kappa} (\Gamma)$ is the space of quantum modular forms of weight $\kappa$ and $\mathcal O(R)$ the space of real analytic functions on $R\subset\R$.
Clearly, we can construct examples of depth two simply by multiplying two (depth one) quantum modular forms.
Non-trivial examples arise from $F$ (see Theorem \ref{maintheorem} for precise statement). 

\begin{theorem}\label{maintheorem}	
	For $p \geq 2$, the higher rank false theta function $F$ can be written as the sum of two depth two quantum modular forms (with quantum set $\mathbb{Q}$) of weight one and two.
\end{theorem}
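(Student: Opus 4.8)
The plan is to exploit the decomposition $F(q) = \frac{2}{p}F_1(q^p) + 2F_2(q^p)$ recorded above, which reduces Theorem \ref{maintheorem} to proving that $\tau \mapsto F_1(p\tau)$ and $\tau \mapsto F_2(p\tau)$ are depth two quantum modular forms (with quantum set $\mathbb{Q}$) of weights one and two respectively, in the sense of Definition \ref{defgen}. Since the quantum set is all of $\mathbb{Q}$ and depth two quantum modularity is governed by the error of modularity \eqref{errormod} lying in $\mathcal{Q}_{\kappa}(\Gamma)\mathcal{O}(R) + \mathcal{O}(R)$, the argument splits into two independent tasks for each $F_i$: first, produce a smooth companion on $\mathbb{H}^-$ agreeing with $F_i(p\tau)$ to infinite order at every rational; second, control the modular error of that companion.

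First I would construct, for each $i \in \{1,2\}$, a double Eichler integral companion of the shape
\begin{equation*}
\widehat{F_i}(\tau) = \int_{-\overline{\tau}}^{i\infty}\int_{w_1}^{i\infty}\frac{f_i(w_1,w_2)}{\sqrt{-i(w_1+\tau)}\sqrt{-i(w_2+\tau)}}\,dw_2\,dw_1,
\end{equation*}
where $f_i \in S_{\frac{3}{2}}(\chi_1,\Gamma)\otimes S_{\frac{3}{2}}(\chi_2,\Gamma)$ is read off from the quadratic form $\frac{p}{3}(m_1^2+m_2^2+m_1m_2)$ together with the factor $\min(m_1,m_2)(1-q^{m_1})(1-q^{m_2})(1-q^{m_1+m_2})$ defining $F$. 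The two factors $\sqrt{-i(w_j+\tau)}$ each contribute weight $\frac12$, so the symmetric double integral naturally carries weight one, matching $F_1$; the weight two piece $F_2$ is matched by a closely related companion whose total homogeneity adds up to two (combining an Eichler integral with an unintegrated weight $\frac32$ factor), consistent with the $\frac1p$ versus $1$ normalisation in the decomposition. The key analytic input is a two-dimensional Euler--Maclaurin asymptotic expansion, generalising Zagier's infinite-order vanishing behind \eqref{sumoftails}, establishing that $F_i(p\tau)$ and $\widehat{F_i}(\tau)$ share identical asymptotic series as $\tau \to \frac{h}{k} \in \mathbb{Q}$ radially.

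Second, I would compute the modular transformation of $\widehat{F_i}$ under $M = \left(\begin{smallmatrix} a & b \\ c & d\end{smallmatrix}\right) \in \SL_2(\mathbb{Z})$. Forming $\widehat{F_i}(\tau) - (c\tau+d)^{-k}\widehat{F_i}(M\tau)$ and inserting the modularity of the tensor product $f_i$, one reorganises the error by splitting the iterated integral at the cusp $\frac{d}{c}$. The crucial mechanism is that the outer integration produces a period integral of a single weight $\frac32$ form---precisely a depth one object transforming as in Lemma \ref{lquant}, hence an element of $\mathcal{Q}_{\kappa}(\Gamma)$---multiplied by the inner single Eichler integral, which is real-analytic on the relevant interval $R \subset \mathbb{R}$ and thus lies in $\mathcal{O}(R)$; the remaining doubly-cuspidal contribution is smooth and also lands in $\mathcal{O}(R)$. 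This yields exactly the membership required by Definition \ref{defgen}.

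The main obstacle is the bookkeeping in this last step: unlike the rank one case, the double integral transforms into a sum of terms indexed by how each of the two integration limits is moved under $M$, and one must verify that every cross term either factorises cleanly as (depth one quantum modular form) $\times$ (real-analytic function) or is outright smooth, with no genuinely new non-real-analytic contribution surviving. A secondary difficulty is making the two-variable Euler--Maclaurin expansion rigorous for the doubly-indexed sum defining $F_i$, in particular handling the $\min(m_1,m_2)$ factor, which breaks the sum into chambers and demands separate asymptotic treatment along the diagonal $m_1 = m_2$.
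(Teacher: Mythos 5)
Your proposal follows essentially the same route as the paper: the same decomposition $F(q)=\frac{2}{p}F_1(q^p)+2F_2(q^p)$, double Eichler integral companions (the paper's $\mathcal{E}_1$ and $\mathcal{E}_2$), asymptotic agreement at every rational via two-dimensional Euler--Maclaurin summation, and the modular error obtained by splitting the iterated integral at the cusp $\frac{d}{c}$, which yields precisely $r_{f_1,f_2,\frac{d}{c}}(\tau)+I_{f_1}(\tau)\,r_{f_2,\frac{d}{c}}(\tau)$, i.e.\ (real-analytic) $+$ (depth one quantum modular form)$\times$(real-analytic), as required by Definition \ref{defgen}. The one implementation point your sketch leaves implicit (inside the ``rigorous Euler--Maclaurin'' difficulty you flag) is that the paper first rewrites the companions as non-holomorphic theta series whose coefficients are double error functions $M_2$ (Section 6) before the asymptotic comparison can be carried out.
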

It is worth noting that all of our examples of quantum modular forms, including those studied in \cite{BM15}, have  $\mathbb{Q}$ as quantum set. Even though this feature is rare, a possible explanation is that vertex algebra characters are
generally better behaved functions and are expected to combine into vector-valued families under the full modular group. Thus in our future work \cite{BKM} we explore a vector-valued generalization of this theorem and its consequences to representation theory.

Zwegers \cite{Zw} found an important connection between the error term of the Eichler integral (as in Lemma \ref{lquant}) and classical Mordell integrals. This result applied to the case of $F^*_{j,p}$ leads to an elegant expression for the error term as a Mordell integral
$$\int_{\R} \cot\left(\pi i w+\frac{\pi j}{2p}\right)e^{2\pi i p w^2\tau}dw.$$
In this work we encounter error terms for iterated (double) Eichler integrals, so it is natural  to attempt to extend Zwegers' result to two dimensions. In \cite{BKM} we solve this problem in several special cases. In particular, we find that relevant integrals for the weight one component $\mathcal{E}_1$ (cf. Lemma \ref{quantE}) take the form
$$\int_{\R^2}
\cot\left(\pi iw_1+\pi\alpha_1\right)
\cot\left(\pi iw_2+\pi\alpha_2\right)e^{2\pi i\left(3w_1^2+3w_1 w_2 +w_2^2\right) \tau}
dw_1dw_2,$$
for some scalars $\alpha_1, \alpha_2$. This is what we call  a {\it double Mordell} integral.
We next turn to the modular completion of these Eichler integrals (see Propostiton \ref{Prop:CompletionOne} for a more precise version). For theta functions associated to indefinite quadratic forms, the reader is referred to \cite{ABMP, Ku, Na, WR}.
\begin{theorem}\label{indef}
	There exists an indefinite theta function, defined via \eqref{defineTheta}, of signature $(2,2)$ with ``purely non-holomorphic'' part $\Theta(\tau)\mathcal E_{1}(\tau)$ where $\Theta$ is a theta function of signature $(2,0)$ and the Eichler integral $\mathcal E_1$ is defined in \eqref{defineE1}.
\end{theorem}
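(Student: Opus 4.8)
The plan is to follow the signature $(1,1)$ construction of Zwegers \cite{Zw}, as generalized to higher depth through the generalized (double) error functions of \cite{ABMP}, and then to isolate the genuinely non-holomorphic part of the resulting completion. First I would fix the quadratic form $Q$ of signature $(2,2)$ underlying \eqref{defineTheta}: its positive-definite $(2,0)$ block is the $A_2$-type form governing $\Theta$ (essentially the $3w_1^2+3w_1w_2+w_2^2$ appearing in the double Mordell integral), paired against the two negative directions along which the double Eichler integral $\mathcal E_1$ is taken. With $Q$ and the associated lattice in hand, I would write the completed theta function $\widehat\Theta(\tau)$ as in \eqref{defineTheta}, inserting as summation kernel a double error function $E_2$ built from two characteristic vectors in the negative cone. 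Modularity of $\widehat\Theta$ would then follow from Vign\'eras' criterion: it suffices to verify that the kernel $E_2(\sqrt v\,\cdot\,)$ satisfies the relevant second-order differential equation, which for the generalized error functions of \cite{ABMP} is known, so that $\widehat\Theta$ transforms as a non-holomorphic modular object of the correct weight, matching Proposition \ref{Prop:CompletionOne}.

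The heart of the argument is the decomposition of the double error kernel into its boundary (sign) behavior plus corrections. I would write $E_2$ as a sum of a $\sgn\times\sgn$ term, which produces the holomorphic/quantum piece, a single-error correction, and a genuinely two-fold non-holomorphic correction built from the complementary error functions $M_1,M_2$. The purely non-holomorphic part of $\widehat\Theta$ is then precisely the contribution in which the positive-definite directions are summed freely while the two indefinite directions are weighted by these complementary kernels. The key claim is that this contribution factors: the sum over the $(2,0)$ block collapses to the positive-definite theta function $\Theta$, while the remaining sum over the complementary directions, after the complementary error function is replaced by its iterated integral representation, reproduces exactly the double Eichler integral $\mathcal E_1$ of \eqref{defineE1}.

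The main obstacle I expect is this last factorization together with the integral matching. On the analytic side, one must establish the two-dimensional analogue of Zwegers' identity expressing $M_2$ as an iterated integral of a product of theta kernels against $(w_j+\tau)^{-1/2}$-type weights; this is the precise source of the double Eichler integral and requires care with the order of integration and with convergence of the resulting iterated integral. On the combinatorial side, one must verify that the cross terms, namely the $\sgn\times\sgn$ term and the single-error corrections, contribute only to the holomorphic/quantum part and do not leak into the non-holomorphic factor, so that the non-holomorphic part is genuinely $\Theta(\tau)\mathcal E_1(\tau)$ with no residual corrections. Once these two points are settled, the theorem follows by comparing the resulting expression with the definition \eqref{defineE1} and the completion recorded in Proposition \ref{Prop:CompletionOne}.
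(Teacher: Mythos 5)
Your plan follows the same route as the paper: build a signature $(2,2)$ theta function with a kernel made from the generalized error functions of \cite{ABMP}, prove modularity via Vign\'eras' theorem (Theorem \ref{Vigneras}), decompose the kernel via \eqref{M2E2}, and obtain the purely non-holomorphic part by letting the positive-definite sub-lattice sum off to $\Theta$ while the $M_2$-weighted negative directions reproduce $\mathcal E_1$ through the iterated-integral identity for $M_2$ --- which is exactly Lemma \ref{thm:thetasum}, and the factorization itself is the one-line shift $(n_1,n_2,n_3,n_4)\mapsto(n_1-n_3,n_2-n_4,n_3,n_4)$ in Proposition \ref{Prop:CompletionOne}(1). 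So the skeleton is right, and you correctly identified the analytic core.

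There is, however, a concrete gap in your kernel construction. A single boosted $E_2$ attached to two characteristic vectors in the negative $2$-plane depends only on the projection of $\boldsymbol{n}$ onto that plane, and the resulting series diverges: after the block-diagonalizing shift, the lattice sum splits as $\Theta_{A_0,1,\cdot}(\tau)$ times $\sum_{\boldsymbol{n}} E_2(\sqrt{v}\,\cdot\,)q^{-Q(\boldsymbol{n})}$, and since $E_2$ is bounded and tends to $\pm1$ away from its walls, only the $M_2$-piece of the decomposition \eqref{M2E2} decays against $q^{-Q(\boldsymbol{n})}$; the $\sgn\times\sgn$ piece and both $\sgn\cdot M_1$ pieces blow up. This is precisely why the paper's kernel $P$ is \emph{not} a single $E_2$ but the completion attached to two \emph{pairs} of vectors whose partners are null (the ``double null limit''): $P$ contains sign factors in the positive-definite coordinates, e.g.\ $\left(\sgn(2n_3+n_4)+\sgn(n_1)\right)\left(\sgn(3n_3+2n_4)+\sgn(n_2)\right)$, whose difference structure confines the holomorphic part to a cone on which $Q_1>0$; establishing convergence there is Proposition \ref{Prop:CompletionOne}(2) (a determinant estimate plus the decay \eqref{limitsM2}), a step your proposal never addresses even though Vign\'eras' theorem presupposes it. The same degeneration creates a second problem: the sign and $M_1$ terms in $P$ are not themselves smooth solutions of Vign\'eras' equation, so the differential-equation criterion cannot be applied to $P$ directly; the paper circumvents this by writing $P=\lim_{\varepsilon\to0}\widehat{P}_\varepsilon$ with $\widehat{P}_\varepsilon$ a sum of four honest $E_2$-functions (via Lemma \ref{limitsE2}), applying Vign\'eras to each, and passing to the limit. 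Without the null-partner sign structure and this regularization, your construction stalls at both convergence and modularity.
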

The paper is organized as follows.
In Section 2, we review basic results on special functions, non-holomorphic Eichler integrals, and ``double error'' functions. We also recall the notion of quantum modular forms and introduce higher depth quantum modular forms. In Section 3,  the $\frak{sl}_3$ higher rank false theta function $F(q)=\frac{2}{p}F_1(q^p)+2F_2(q^p)$ is introduced. In Section 4,  we determine the asymptotic behavior of $F_1$ and $F_2$ at roots of unity. In Section 5, we introduce multiple Eichler integrals and prove modular transformation formulas for the double Eichler integrals. We also  study certain linear combinations of double Eichler integrals associated to $F_j$. In Section 6, we express special double Eichler integrals as pieces of indefinite theta series. Based on results in this section, in Section 7, we prove the main result, Theorem 1.1, on the quantum modularity of $F$. Section 8 deals with the completion of certain indefinite theta functions of signature $(2,2)$ associated to the companions of $F_j$ proving Theorem \ref{indef}. We conclude in Section 9 with several questions.

\section*{Acknowledgments} The research of the first author is supported by the Alfried Krupp Prize for Young University Teachers of the Krupp foundation
and the research leading to these results receives funding from the European Research Council under the European
Union's Seventh Framework Programme (FP/2007-2013) / ERC Grant agreement n. 335220 - AQSER. The research of the second author is partially supported by the European Research Council under the European
Union's Seventh Framework Programme (FP/2007-2013) / ERC Grant agreement n. 335220 - AQSER.
The research of the third author was partially supported by the Simons Foundation Collaboration Grant for Mathematicians ($\#$ 317908),
NSF grant DMS-1601070, and a stipend from the Max Planck Institute for Mathematics, Bonn.
The authors thank Chris Jennings-Shaffer, Josh Males, Boris Pioline, and Larry Rolen for helpful comments. Finally we thank the referee for providing useful comments.
\section{Preliminaries}

\subsection{Special functions}
Define, for $u\in\R$, 
$$
E(u):=2\int_{0}^{u}e^{-\pi w^2}dw.
$$
This function is essentially the error function and
its derivative is
$
E'(u)=2e^{-\pi u^2}.
$
We have the representation
\begin{equation}\label{EG}
E(u)=\sgn(u)\left(1-\frac{1}{\sqrt{\pi }}\Gamma\left(\frac12,\pi u^2\right)\right),
\end{equation}
where $\Gamma({\alpha},u):=\int_u^\infty e^{-w}w^{{\alpha}-1}dw$ is the {\it incomplete gamma function} and where for $u\in\R$, we set
$$\sgn(u):=\begin{cases}
1 & \text{if $u>0$,}\\
-1 & \text{if $u<0$,}\\
0 & \text{if $u=0$.}
\end{cases}
$$
We also require the functional equation of the incomplete $\Gamma$-function {with $\alpha=\frac12$}
\begin{equation}\label{G12}
\Gamma\left(\frac12,u\right) = -\frac12 \Gamma\left(-\frac12,u\right) + \frac{1}{\sqrt{u}} e^{-u}.
\end{equation}
Moreover, for $u \neq 0$, set
$$
M(u) := \frac{i}{\pi}\int_{\R-iu}\frac{e^{-\pi w^2-2\pi i uw}}{w}dw.
$$
We have
\begin{equation*}\label{defineM}
M(u)=E(u)-\sgn(u).
\end{equation*}
Thus, by \eqref{EG}
\begin{equation}\label{MG}
M(u) = -\frac{\sgn(u)}{\sqrt{\pi}} \Gamma\left(\frac12,\pi u^2\right).
\end{equation}
This implies that the following bound holds
\begin{equation*}\label{M1bound}
\lvert M(u)\rvert \leq 2e^{-\pi u^2}.
\end{equation*}

We next turn to two-dimensional analogues, following \cite{ABMP} (using slightly different notation).
Define $E_2:\R\times\R^2\rightarrow\R$ by (throughout we use bold letters for vectors and denote their components using subscripts)
$$
E_2(\kappa;\boldsymbol{u}):=\int_{\R^2}\sgn\left(w_1\right)\sgn\left(w_2+\kappa w_1\right)e^{-\pi \left(\left(w_1-u_1\right)^2+\left(w_2-u_2\right)^2\right)}dw_1dw_2.
$$
Note that
\[
E_2(\kappa;-\boldsymbol{u})=E_2(\kappa;\boldsymbol{u}).
\]
Moreover, also following \cite{ABMP}, for $u_2, u_1- \kappa u_2\neq 0$ we set 

$$M_2(\kappa; \boldsymbol{u}):=-\frac{1}{\pi^2} \int_{\mathbb{R} - i u_2}  \int_{\mathbb{R}-iu_1} \frac{e^{-\pi w_1^2-\pi w_2^2-2 \pi i (u_1 w_1+u_2 w_2)}}{w_2(w_1-\kappa w_2)}dw_1dw_2. $$
Then we have 
\begin{equation}\label{M2E2}\begin{split}
M_2\left(\kappa;\boldsymbol{u}\right)&=E_2\left(\kappa;\boldsymbol{u}\right)-\sgn\left(u_2\right)M\left(u_1\right)\\
&\quad-\sgn\left(u_1-\kappa u_2\right)M\left(\frac{u_2+\kappa u_1}{\sqrt{1+\kappa^2}}\right)-\sgn\left(u_1\right)\sgn\left(u_2+\kappa u_1\right).
\end{split}\end{equation}
Note that \eqref{M2E2} extends the definition of $M_2$ to $u_2=0$ or $u_1=\kappa u_2$.
With $x_1:=u_1-\kappa u_2$, $x_2 := u_2$, a direct calculation shows that
\begin{align*}
M_2\left(\kappa; \boldsymbol{u}\right)&=E_2\left(\kappa; x_1+\kappa x_2, x_2\right)+\sgn(x_1)\sgn(x_2)\\
&\qquad-\sgn(x_2)E(x_1+\kappa x_2)-\sgn(x_1)E\left(\frac{\kappa x_1}{\sqrt{1+\kappa^2}}+\sqrt{1+\kappa^2}x_2\right).
\end{align*}

We have the first partial derivatives
\begin{align}\label{diffM2}
M_2^{(0,1)}\left(\kappa;\boldsymbol{u}\right)&=\frac{2}{\sqrt{1+\kappa^2}} e^{-\frac{\pi(u_2+\kappa u_1)^2}{1+\kappa^2}} M\left(\frac{u_1-\kappa u_2}{\sqrt{1+\kappa^2}}\right),\\
\label{middM1}
M_2^{(1,0)}\left(\kappa;\boldsymbol{u}\right)&=2e^{-\pi u_1^2}M(u_2)+\frac{2\kappa}{\sqrt{1+\kappa^2}} e^{-\frac{\pi(u_2+\kappa u_1)^2}{1+\kappa^2}} M\left(\frac{u_1-\kappa u_2}{\sqrt{1+\kappa^2}}\right),
\end{align}
and the limiting behavior (cf.  \cite[Proposition 3.3, iii]{ABMP})

\begin{align}\label{limitsM2}
M_2\left(\kappa; \lambda \boldsymbol{u}\right)&\sim -\frac{e^{-\pi\lambda^2 \left(u_1^2+u_2^2\right)}}{\lambda^2\pi^2 u_2(u_1-\kappa u_2) } \qquad (\text{as} \quad \lambda\rightarrow \infty).
\end{align}
\begin{lemma}\label{limitsE2}
	For $u_3,u_4+\kappa u_3\neq 0$, we have the following limits
	\begin{align*}
	\lim_{\varepsilon\to 0^+}E_2\left(\varepsilon\kappa; u_1, \varepsilon u_2+\varepsilon^{-1}u_3 \right)
	&=\sgn(u_3)E(u_1),
	\\
	\lim_{\varepsilon\to 0^+}E_2\left(\kappa; \varepsilon u_1+\varepsilon^{-1} u_3, \varepsilon u_2+\varepsilon^{-1}u_4 \right)
	&=\sgn(u_3)\sgn(u_4+\kappa u_3).
	\end{align*}
\end{lemma}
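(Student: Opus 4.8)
The plan is to exploit the fact that in both limits the Gaussian weights are recentered at points whose coordinates blow up like $\varepsilon^{-1}$. After translating the integration variables to recenter the Gaussians, the sign factors converge pointwise to constants and the integral decouples into a product of one-dimensional Gaussian integrals. Throughout I would justify passing the limit inside the integral by dominated convergence, using that the sign factors are bounded by $1$ in absolute value and that the Gaussian weights, once recentered, are integrable and independent of $\varepsilon$.

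For the first limit, I would substitute $w_2 = \varepsilon u_2 + \varepsilon^{-1}u_3 + t$, turning the $w_2$-Gaussian into $e^{-\pi t^2}$ and the second sign factor into $\sgn(\varepsilon u_2 + \varepsilon^{-1}u_3 + t + \varepsilon\kappa w_1)$. For fixed $w_1, t$ and $u_3 \neq 0$, the term $\varepsilon^{-1}u_3$ dominates as $\varepsilon \to 0^+$, so this sign factor tends to $\sgn(u_3)$. Dominated convergence then gives
$$\lim_{\varepsilon \to 0^+} E_2 = \sgn(u_3)\left(\int_\R \sgn(w_1)e^{-\pi(w_1-u_1)^2}dw_1\right)\left(\int_\R e^{-\pi t^2}dt\right).$$
The second integral equals $1$, and a short computation (splitting at $w_1 = 0$, substituting $v = w_1 - u_1$, and using the evenness of the Gaussian together with $\int_\R e^{-\pi v^2}dv = 1$) identifies the first integral with $2\int_0^{u_1} e^{-\pi v^2}dv = E(u_1)$, yielding the claim.

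For the second limit, I would substitute both $w_1 = \varepsilon u_1 + \varepsilon^{-1}u_3 + s$ and $w_2 = \varepsilon u_2 + \varepsilon^{-1}u_4 + t$, so that the two Gaussians become $e^{-\pi s^2}$ and $e^{-\pi t^2}$. The first sign factor becomes $\sgn(\varepsilon u_1 + \varepsilon^{-1}u_3 + s)$, which tends to $\sgn(u_3)$ since $u_3 \neq 0$; the argument of the second sign factor equals $\varepsilon^{-1}(u_4 + \kappa u_3) + O(1)$, which tends to $\sgn(u_4 + \kappa u_3)$ since $u_4 + \kappa u_3 \neq 0$. Both constants factor out of the integral, and the remaining two-dimensional Gaussian integral $\int_{\R^2} e^{-\pi(s^2+t^2)}\,ds\,dt$ equals $1$, giving $\sgn(u_3)\sgn(u_4+\kappa u_3)$.

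The main obstacle is one of rigor rather than computation: justifying the dominated-convergence step and checking that the pointwise limits of the sign factors hold almost everywhere. The hypotheses $u_3 \neq 0$, and $u_4 + \kappa u_3 \neq 0$ in the second case, are exactly what guarantee that the dominant term of order $\varepsilon^{-1}$ does not vanish, so that the sign factors stabilize to a constant for every fixed value of the translated variables; the set on which an argument is exactly zero has measure zero and does not affect the integral. With the uniform bound $\lvert\sgn\rvert \leq 1$ and the $\varepsilon$-independent integrable majorant furnished by the recentered Gaussians, Lebesgue's theorem applies without difficulty.
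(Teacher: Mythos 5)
Your proposal is correct and follows essentially the same route as the paper's proof: recenter the Gaussian weights by a change of variables, observe that the sign factors converge pointwise to constants (using $u_3\neq 0$, respectively $u_4+\kappa u_3\neq 0$), pass the limit inside the integral via dominated convergence with the Gaussian as majorant, and evaluate the decoupled one-dimensional integrals, with $\int_\R \sgn(w_1)e^{-\pi(w_1-u_1)^2}dw_1=E(u_1)$. The only cosmetic difference is that the paper shifts both variables and writes out only the first limit (declaring the second analogous), whereas you treat both explicitly.
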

\begin{proof}
	We only prove the first statement, the second follows analogously.
		We may compute the limit inside the integral due to the convergence of the dominating integral $\int_{\R^2}e^{-\pi( w_1^2+w_2^2) }dw=1$ to obtain
		\begin{align*}
		&\quad \lim_{\varepsilon\to 0^+}E_2\left(\varepsilon\kappa; u_1, \varepsilon u_2+\varepsilon^{-1}u_3 \right)
		\\&\qquad\quad=\int_{\R^2}e^{-\pi \left( w_1^2+w_2^2\right)}  \sgn\left(w_1+u_1\right)
		\lim_{\varepsilon\to 0^+}  \sgn\left(u_3+\varepsilon\left(w_2+\varepsilon\kappa w_1+\varepsilon u_2+\varepsilon\kappa u_1 \right) \right) dw_2dw_1
		\\&\qquad\quad=\int_{\R}e^{-\pi w_1^2} \sgn\left(w_1+u_1\right)\int_{\R} e^{-\pi w_2^2} \sgn\left(u_3 \right) dw_2 dw_1=\sgn(u_3) E(u_1).
		\end{align*}
\end{proof}

\subsection{Euler-Maclaurin summation formula}
We now state a special case of the Euler-Maclaurin summation formula. We only give it in the two-dimensional case; the one-dimensional case can be concluded by viewing the second variable as constant.

Let $B_m(x) $ be the $m$-th Bernoulli polynomial  defined by $\frac{t e^{xt}}{e^t - 1} =: \sum_{m \geq 0} B_m(x) \frac{t^m}{m!}$. We also require
\begin{equation*} \label{X1X}
B_m(1-x) = (-1)^m B_m(x).
\end{equation*}

The Euler-Maclaurin summation formula implies that, for $\boldsymbol{\alpha}\in\R^2$, $F:\R^2\rightarrow\R$ a $C^\infty$-function which has rapid decay, we have (generalizing a result of \cite{Zag76} to include shifts by $\alpha$) 
\begin{equation}\label{EulerMcLaurin}
\begin{split}
&\sum_{\boldsymbol{n}\in\N_0^2}  F((\boldsymbol{n}+\boldsymbol{\alpha})t) \sim \frac{\mathcal I_F}{t^2} - \sum_{n_2 \geq 0} \frac{B_{n_2+1}(\alpha_2)}{(n_2+1)!} \int_{0}^{\infty}F^{(0,n_2)}(x_1,0)dx_1t^{n_2-1} \\ &\quad - \sum_{n_1 \geq 0} \frac{B_{n_1+1}(\alpha_1)}{(n_1+1)!}\int_{0}^{\infty}F^{(n_1,0)}(0,x_2)dx_2t^{n_1-1} + \sum_{n_1,n_2\geq 0}\frac{B_{n_1+1}(\alpha_1)}{(n_1+1)!}\frac{B_{n_2+1}(\alpha_2)}{(n_2+1)!} F^{(n_1,n_2)}(0,0)t^{n_1+n_2},
\end{split}
\end{equation}
where $\mathcal I_F:=\int_{0}^{\infty}\int_{0}^{\infty} \allowbreak F(\boldsymbol{x})dx_1dx_2$. 
Here by $\sim$ we mean that the difference between the left- and the right-hand side is $O(t^N)$ for any $N\in \N$.

\subsection{Shimura's theta functions}
We require transformation laws of certain theta functions studied, for example, by Shimura \cite{Sh}.
For $\nu\in\{0,1\}$, $h\in\Z$, $N,A\in\N$, with $A|N$, $N|hA$, define
\begin{equation}\label{shimura}
\Theta_\nu (A,h,N;\tau):=\sum_{\substack{m\in\Z \\ m\equiv h\pmod{N}}}m^\nu  q^{\frac{Am^2}{2N^2}}.
\end{equation}
Recall the following modular transformation 
\begin{equation}\label{Shimura1}
\Theta_\nu (A,h,N;M\tau) = e\left(\frac{abAh^2}{2N^2}\right)\left(\frac{2Ac}{d}\right)\varepsilon_d^{-1} (c\tau+d)^{\frac12+\nu}\Theta_\nu(A,ah,N;\tau)
\end{equation}
for $M=\left(\begin{smallmatrix} a&b\\c&d \end{smallmatrix}\right) \in \Gamma_0(2N)$ with $2|b$. Here $e(x):=e^{2\pi ix}$, for odd $d$, $\varepsilon_d=1$ or $i$, depending on whether $d\equiv 1\pmod{4}$ or $d\equiv 3\pmod{4}$. Also note that if $h_1\equiv h_2\pmod{N}$, then we have
\begin{equation*}\label{Shimura2}
\Theta_\nu(A,h_1,N;\tau)=\Theta_\nu(A,h_2,N;\tau),\qquad \Theta_\nu(A,-h,N;\tau)=(-1)^\nu\Theta_\nu(A,h,N;\tau).
\end{equation*}

\subsection{Indefinite theta functions}
	We begin by defining (possibly indefinite) theta functions.
	\begin{definition}
		Let $A\in M_m(\Z)$ be a non-singular symmetric $m\times m$ matrix, $P:\R^{m}\rightarrow \C$ and $\boldsymbol{a}\in\Q^m$. We define the associated theta function by ($\tau= u+iv$) 
		\begin{equation*}
		\Theta_{A,P,\boldsymbol{a}}(\tau):= \sum_{\boldsymbol{n}\in \boldsymbol{a}+\Z^m}P\left(\sqrt{v}\boldsymbol{n}\right)q^{\frac12 \boldsymbol{n}^TA\boldsymbol{n}}.
		\end{equation*}
	\end{definition}
	The following theorem shows that under certain conditions $\Theta_{A,P,\boldsymbol{a}}$ is modular.
	
	\begin{theorem}[Vign\'eras, \cite{Vi}]\label{Vigneras}
		Suppose that $A\in M_m(\Z)$ is non-singular and that $P$ satisfies the following conditions:  {\normalfont
			\begin{enumerate}[leftmargin=*]
				\item
				{\it For any differential operator $D$ of order two and any polynomial $R$ of degree at most two, we have that $D(\boldsymbol{w})(P(\boldsymbol{w})e^{{\pi}Q(\boldsymbol{w})})$ and $R(\boldsymbol{w})P(\boldsymbol{w})e^{{\pi}Q(\boldsymbol{w})}$ belong to $L^2(\R^m)\cap L^1(\R^m)$.}
				
				\item
				{\it 
					For some $\lambda\in\Z$ the Vign\'eras differential equation holds:
					$$
					\left(\mathcal{D}-\frac{1}{4\pi}\Delta\right)P=\lambda P.
					$$
				Here we define the Euler and Laplace operators ($\boldsymbol{w}:=(w_1,\dots,w_m),\ \partial_{\boldsymbol{w}}:=(\frac{\partial}{\partial w_1},\dots \frac{\partial}{\partial w_m})^T$)

				\[
				\mathcal{D}:= \boldsymbol{w}\partial_{\boldsymbol{w}}\qquad\text{and}\qquad
				\Delta=\Delta_{A^{-1}}:=\partial_{\boldsymbol{w}}^T A^{-1}\partial_{\boldsymbol{w}}
				.
				\]}
			\end{enumerate}
		}
		\noindent Then, assuming that $\Theta_{A,P,\boldsymbol{a}}$ is absolutely locally convergent, $\Theta_{A,P,\boldsymbol{a}}$ is modular of weight $\lambda+\frac{m}{2}$ for some subgroup of $\SL_2 (\Z)$.
	\end{theorem}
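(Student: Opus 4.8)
The plan is to prove modularity by reducing to the two generators $T:\tau\mapsto\tau+1$ and $S:\tau\mapsto-1/\tau$ of $\SL_2(\Z)$, treating $T$ by a direct phase computation and $S$ by Poisson summation, with the Vign\'eras differential equation supplying precisely the behavior of the relevant Schwartz function under the Fourier transform. Throughout write $u:=\re(\tau)$, $v:=\im(\tau)$ and set
\[
\varphi_\tau(\boldsymbol{w}):=P\left(\sqrt{v}\,\boldsymbol{w}\right)e^{\pi i\tau\,\boldsymbol{w}^TA\boldsymbol{w}},
\]
so that $\Theta_{A,P,\boldsymbol{a}}(\tau)=\sum_{\boldsymbol{n}\in\boldsymbol{a}+\Z^m}\varphi_\tau(\boldsymbol{n})$. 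Substituting $\boldsymbol{x}=\sqrt{v}\,\boldsymbol{n}$ shows the summand equals $P(\boldsymbol{x})e^{-\pi\boldsymbol{x}^TA\boldsymbol{x}}$ up to the unimodular phase $e^{\pi i u\boldsymbol{n}^TA\boldsymbol{n}}$, so that condition (1), read with $Q(\boldsymbol{w})=-\boldsymbol{w}^TA\boldsymbol{w}$ (the natural choice making $e^{\pi Q}$ the Gaussian factor above), says exactly that $\varphi_\tau$ is of $L^1\cap L^2$, Schwartz-type decay. This guarantees absolute and locally uniform convergence of the series, so $\Theta_{A,P,\boldsymbol{a}}$ is a well-defined real-analytic function, and it legitimizes every manipulation below.

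The $T$-transformation is immediate: replacing $\tau$ by $\tau+1$ leaves $v$, hence $P(\sqrt{v}\,\boldsymbol{n})$, unchanged and multiplies $q^{\frac12\boldsymbol{n}^TA\boldsymbol{n}}$ by $e^{\pi i\boldsymbol{n}^TA\boldsymbol{n}}$. Since $A$ is integral and $\boldsymbol{n}\in\boldsymbol{a}+\Z^m$, this phase depends only on the residue of $\boldsymbol{a}$ on which it is constant, so $T$ (or a suitable power of it) acts by a fixed root-of-unity multiplier. For $S$ I would apply Poisson summation over the shifted lattice,
\[
\Theta_{A,P,\boldsymbol{a}}(\tau)=\sum_{\boldsymbol{n}\in\boldsymbol{a}+\Z^m}\varphi_\tau(\boldsymbol{n})=\sum_{\boldsymbol{\nu}\in\Z^m}e^{2\pi i\boldsymbol{\nu}^T\boldsymbol{a}}\,\widehat{\varphi_\tau}(\boldsymbol{\nu}),
\]
where the two requirements in condition (1) on $R(\boldsymbol{w})P(\boldsymbol{w})e^{\pi Q(\boldsymbol{w})}$ and on $D(\boldsymbol{w})(P(\boldsymbol{w})e^{\pi Q(\boldsymbol{w})})$ are exactly the decay and regularity needed to make both $\varphi_\tau$ and $\widehat{\varphi_\tau}$ admissible for Poisson summation.

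The heart of the argument, and the step I expect to be the main obstacle, is the evaluation of $\widehat{\varphi_\tau}$. The claim is that the Vign\'eras equation $(\mathcal{D}-\frac1{4\pi}\Delta)P=\lambda P$ forces $\widehat{\varphi_\tau}$ to again be of theta type, now attached to the dual form $A^{-1}$ on $\Z^m$ and to the \emph{same} eigenfunction $P$, multiplied by an explicit automorphy factor $\chi(\tau)$ of the shape $(\text{const})\,(\tau/i)^{\lambda+\frac m2}$; this closedness under Fourier transform is precisely what the eigenvalue equation encodes. I would establish this first on the ray $\tau=it$, $t>0$, where $\varphi_{it}(\boldsymbol{w})=P(\sqrt{t}\,\boldsymbol{w})e^{-\pi t\boldsymbol{w}^TA\boldsymbol{w}}=:p_t(\boldsymbol{w})$. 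The efficient mechanism is to differentiate in $t$: since $\tfrac{d}{dt}P(\sqrt t\,\boldsymbol{w})=\tfrac{1}{2t}(\mathcal{D}P)(\sqrt t\,\boldsymbol{w})$, the family $p_t$ satisfies a first-order evolution equation in $t$, and the Vign\'eras equation rewrites the spatial operator $\mathcal{D}-\frac1{4\pi}\Delta$ as the scalar $\lambda$. One then checks that $\widehat{p_t}$ and $t^{-m/2}\chi\, p_{1/t}$ solve the same ODE in $t$ with matching value at $t=1$, which proves the identity. The delicate points here are (i) the signature-dependent constant, entering through the Fresnel-type integrals $\int_{\R}e^{\pi i\tau s^2}ds=(-i\tau)^{-\frac12}$ applied in each eigendirection of $A$ and producing $\lvert\det A\rvert^{-\frac12}$ times an eighth root of unity determined by $\operatorname{sig}(A)$ (the negative-definite directions are where branch care is needed), and (ii) the analytic continuation of the resulting identity from the ray $\tau=it$ to all of $\mathbb{H}$, fixing the branch of $(\tau/i)^{\lambda+\frac m2}$ in $\chi$; this continuation is justified because both sides are real-analytic in $\tau$ by the decay from condition (1).

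Finally I would substitute $\widehat{\varphi_\tau}(\boldsymbol{\nu})=\chi(\tau)\,\varphi^{\ast}_{-1/\tau}(\boldsymbol{\nu})$ into the Poisson identity and recognize the dual sum $\sum_{\boldsymbol{\nu}}e^{2\pi i\boldsymbol{\nu}^T\boldsymbol{a}}\varphi^{\ast}_{-1/\tau}(\boldsymbol{\nu})$ as $\Theta_{A,P,\boldsymbol{a}^{\ast}}(-1/\tau)$, where the dual characteristic $\boldsymbol{a}^{\ast}$ is determined by $A^{-1}\boldsymbol{a}$ and the phase $e^{2\pi i\boldsymbol{\nu}^T\boldsymbol{a}}$ shifts the characteristic. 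This yields the $S$-transformation with automorphy factor $\chi$ of weight $\lambda+\frac m2$, the power $\frac m2$ coming from the $m$-dimensional Jacobian and $\lambda$ from the homogeneity recorded by the eigenvalue. Combined with the $T$-transformation, this gives invariance up to multiplier under the subgroup of $\SL_2(\Z)$ generated by the elements preserving the pair $(A,\boldsymbol{a}+\Z^m)$, a congruence subgroup, establishing modularity of weight $\lambda+\frac m2$ as claimed.
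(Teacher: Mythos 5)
First, a point of calibration: the paper does not prove this statement at all --- it is Vign\'eras's theorem, imported verbatim from \cite{Vi} and used as a black box (e.g.\ in the proof of Proposition \ref{Prop:CompletionOne}). So there is no internal proof to compare against; your proposal has to be measured against the classical argument. Your skeleton (convergence from condition (1), the $T$-transformation by a phase computation, the $S$-transformation by Poisson summation over the shifted lattice, reducing everything to a Fourier-transform identity for the summand) is indeed the standard skeleton, and your reading of condition (1) as Schwartz-type control of $P(\boldsymbol{w})e^{-\pi \boldsymbol{w}^TA\boldsymbol{w}}$ and its derivatives and second moments is the right one.

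However, there is a genuine gap exactly at the step you yourself flag as the heart of the argument, and the mechanism you propose for it cannot work. Write $p_t(\boldsymbol{w}):=P(\sqrt{t}\,\boldsymbol{w})e^{-\pi t\boldsymbol{w}^TA\boldsymbol{w}}$. Then $p_t(\boldsymbol{w})=p_1(\sqrt{t}\,\boldsymbol{w})$ identically, so the ``first-order evolution equation'' $\partial_t p_t=\frac{1}{2t}\mathcal{D}p_t$ is nothing but the chain rule for dilations: it holds for \emph{every} function $P$, uses no input from the Vign\'eras equation, and consequently carries no information. On the Fourier side the same triviality appears as $\widehat{p_t}(\boldsymbol{\xi})=t^{-m/2}\widehat{p_1}\bigl(\boldsymbol{\xi}/\sqrt{t}\bigr)$, so the desired identity $\widehat{p_t}=t^{-m/2}\chi\,p^{\ast}_{1/t}$ for any single value of $t$ is equivalent, by rescaling $\boldsymbol{\xi}$, to the same identity at $t=1$. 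Hence your ``both sides solve the same ODE with matching value at $t=1$'' is circular: the matching at $t=1$, namely $\widehat{p_1}=\chi(i)\,p_1^{\ast}$ (that $P(\boldsymbol{w})e^{-\pi\boldsymbol{w}^TA\boldsymbol{w}}$ is, up to the constant $\chi(i)$ and the passage $A\mapsto A^{-1}$, an eigenfunction of the Fourier transform), is precisely the statement to be proved, and the flow in $t$ adds nothing to it.

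What the Vign\'eras equation actually buys is the following: a direct computation shows it is equivalent to
\[
\left(-\tfrac{1}{4\pi}\Delta_{A^{-1}}+\pi\,\boldsymbol{w}^TA\boldsymbol{w}\right)p_1=\left(\lambda+\tfrac{m}{2}\right)p_1,
\]
i.e.\ $p_1$ is an eigenfunction of an (indefinite) harmonic oscillator, and the Fourier transform intertwines this operator with its dual built from $A^{-1}$. But this alone does not force $\widehat{p_1}$ to be proportional to $p_1^{\ast}$: already for positive definite $A$ the eigenspace of eigenvalue $\lambda+\frac m2$ has dimension $\binom{\lambda+m-1}{m-1}>1$, so ``same eigenvalue equation'' is far from ``same function up to scalar.'' The missing idea --- and the actual content of Vign\'eras's proof in \cite{Vi} --- is the spectral statement that the Fourier transform acts on the \emph{whole} relevant eigenspace by the single scalar dictated by $\lambda$ and the signature of $A$ (classically via Hermite expansions in the definite case, and via the Weil representation of the metaplectic group in the indefinite case, with condition (1) guaranteeing that $p_1$ lies in the domain where this representation-theoretic argument applies). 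Your Fresnel-integral remark ``in each eigendirection of $A$'' only treats the case where $P$ factors along eigendirections, which general eigenfunctions do not. Without supplying this spectral/representation-theoretic input, the proposal establishes the reduction to the key identity but not the identity itself --- which is exactly why the paper quotes the theorem rather than reproving it.
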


\subsection{Quantum modular forms}
We already motivated quantum modular forms in the introduction. The formal definition is as follows \cite{ZagierQuantum}.

\begin{definition}
	A function $f:\mathcal Q\to \C$ (here $\mathcal{Q} \subseteq \mathbb{Q}$) is called a {\it quantum modular form of weight $k \in \frac12\Z$ and multiplier $\chi$ for a subgroup $\Gamma$ of $\SL_2(\Z)$ and quantum set $\mathcal Q$} if for $M=\left(\begin{smallmatrix}
	a & b\\ c & d
	\end{smallmatrix}\right)\in\Gamma$, the function
	$$
	f(\tau)-\chi(M)^{-1}(c\tau+d)^{-k}f(M\tau)
	$$
	can be extended to an open subset of $\R$ and is real-analytic there. We denote the vector space of such forms by $\mathcal Q_k(\Gamma,\chi)$.
\end{definition}

\begin{remark}
	Zagier also considered {\it strong quantum modular forms}. Here one is looking at asymptotic expansions instead of just values.
\end{remark}

The introduction already gives examples of quantum modular forms. As mentioned there, the functions $F_{j,p}$ satisfy modular type transformations making them quantum modular forms. More generally, for $f\in S_k(\Gamma,\chi)$, the space of cusp forms of weight $k$ transforming as
\[
f(M\tau)=(c\tau+d)^k \chi(M) f(\tau)
\]
for $M=\left(\begin{smallmatrix} a&b\\c&d\end{smallmatrix}\right)\in\Gamma\subset\SL_2(\Z)$
and $\chi$ some multiplier, we set, for $\frac{d}{c}\in\Q$,

\begin{align}\label{Eichler}
I_f(\tau)&:=\int_{-\overline{\tau}}^{i\infty}\frac{f(w)}{\left(-i(w+\tau)\right)^{2-k}}dw,\qquad
r_{f, \frac dc}(\tau):=\int_{\frac dc}^{i\infty}\frac{f(w)}{\left(-i(w+\tau)\right)^{2-k}}dw.
\end{align}
For weight $k=\frac12$, we allow $f\in M_{\frac12}(\Gamma,\chi)$, the space of holomorphic modular forms of weight $\frac12$.
To state the modularity properties of $I_f$, we let $\Gamma^\ast :=P \Gamma P^{-1}$,
where $P:=\left(\begin{smallmatrix} -1 & 0 \\ 0 & 1 \end{smallmatrix} \right)$.
The proof of the following lemma follows along the same lines as the proof of Theorem \ref{quantheorem} below.
\begin{lemma}\label{lquant}
We have the transformation, for $M\in\Gamma^\ast$,
\[
I_f(\tau)-\chi^{-1}\left(M^\ast\right)(c\tau+d)^{k-2} I_f\left(M \tau\right)=r_{f, \frac{d}{c}}(\tau).
\]
The function $I_f$ is defined on $\H\cup\Q$ whereas $r_{f, \frac dc}$ exists on all of $\R\setminus \{-\frac{d}{c}\}$ and is real-analytic there. If $f\in S_k(\Gamma,\chi)$, then $r_{f,\frac{d}{c}}$ exists on $\R$.
\end{lemma}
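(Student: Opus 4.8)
The plan is to use the holomorphy of the integrand $w\mapsto f(w)(-i(w+\tau))^{k-2}$ on $\H$ to break $I_f$ at the rational point $\frac dc$ and then identify the resulting finite-contour piece with a modular translate of $I_f$. Since the integrand is holomorphic for $w\in\H$, path-independence gives
\[
I_f(\tau)=\int_{-\overline\tau}^{\frac dc}\frac{f(w)}{(-i(w+\tau))^{2-k}}\,dw+r_{f,\frac dc}(\tau),
\]
so the assertion reduces to the single identity
\[
\chi^{-1}(M^\ast)(c\tau+d)^{k-2}I_f(M\tau)=\int_{-\overline\tau}^{\frac dc}\frac{f(w)}{(-i(w+\tau))^{2-k}}\,dw,
\]
where I write $M^\ast=PMP=\left(\begin{smallmatrix}a&-b\\-c&d\end{smallmatrix}\right)\in\Gamma$ (using $P^2=I$).

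The core of the computation is the substitution $w=M^\ast w'=\frac{aw'-b}{-cw'+d}$ inside $I_f(M\tau)=\int_{-\overline{M\tau}}^{i\infty}\frac{f(w)}{(-i(w+M\tau))^{2-k}}\,dw$. First I would verify that the contour matches: a direct check gives $M^\ast(-\overline\tau)=-\overline{M\tau}$ and $M^\ast(\frac dc)=i\infty$, so as $w'$ traverses $-\overline\tau\to\frac dc$ the image $w$ traverses $-\overline{M\tau}\to i\infty$. Then I would record the three identities, all consequences of $ad-bc=1$,
\[
dw=\frac{dw'}{(-cw'+d)^2},\qquad w+M\tau=\frac{w'+\tau}{(-cw'+d)(c\tau+d)},\qquad f(M^\ast w')=(-cw'+d)^k\chi(M^\ast)f(w'),
\]
the last being the modularity of $f$ for $M^\ast\in\Gamma$ (whose lower row is $(-c,d)$). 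Substituting these, the powers of $(-cw'+d)$ collapse via $(-cw'+d)^{k}\cdot(-cw'+d)^{2-k}\cdot(-cw'+d)^{-2}=1$, leaving $\chi(M^\ast)(c\tau+d)^{2-k}$ in front, so that
\[
I_f(M\tau)=\chi(M^\ast)(c\tau+d)^{2-k}\int_{-\overline\tau}^{\frac dc}\frac{f(w')}{(-i(w'+\tau))^{2-k}}\,dw';
\]
multiplying by $\chi^{-1}(M^\ast)(c\tau+d)^{k-2}$ gives exactly the required identity.

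It remains to pin down domains and regularity, which I would handle by estimating the integrand near each endpoint. Convergence of $I_f$ at $i\infty$ is immediate from the growth of $f$ there and the decay of $(-i(w+\tau))^{k-2}$; at the finite endpoint the only singularity of $r_{f,\frac dc}$ sits at $w=\frac dc$, where $-i(w+\tau)$ vanishes precisely when $\tau=-\frac dc$, so for $\tau\in\R\setminus\{-\frac dc\}$ the denominator is bounded below near the endpoint and the integral converges; real-analyticity in $\tau$ then follows by differentiating under the integral sign, using the exponential decay of $f$ along the ray to $i\infty$ for local uniform control. For $f\in S_k(\Gamma,\chi)$ the vanishing of $f$ at the cusp $\frac dc$ supplies extra decay at the finite endpoint, so $r_{f,\frac dc}$ converges even at $\tau=-\frac dc$ and hence on all of $\R$; the same endpoint estimate (integrable when the weight exceeds one, as for the weight-$\frac32$ case of interest) gives the extension of $I_f$ to $\Q$.

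I expect the genuine obstacle to be the branch bookkeeping for the fractional power $(-i(w+\tau))^{2-k}$. The clean cancellation of the $(-cw'+d)$-factors is valid only once the several half-integer powers are assigned compatible branches along the deformed contour, and it is exactly this compatibility that forces the multiplier to enter as $\chi(M^\ast)$ rather than $\chi(M)$; checking that the principal branches combine correctly (equivalently, that the multiplier cocycle is respected under the $P$-conjugation) is the one delicate point, with the remaining steps being the routine manipulations above.
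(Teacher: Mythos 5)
Your proof is correct and is essentially the paper's argument: the paper proves this lemma by reference to the proof of Theorem \ref{quantheorem}, which consists of exactly your steps — the substitution $w=M^\ast w'$ turning $I_f(M\tau)$ into $\chi(M^\ast)(c\tau+d)^{2-k}\int_{-\overline{\tau}}^{d/c}$, splitting the contour at $\frac dc$, and deducing regularity of $r_{f,\frac dc}$ from the exponential decay of the cusp form near $\frac dc$ and $i\infty$ (with the weaker bound $1+w^{-1/2}$ in the non-cuspidal weight-$\frac12$ case).
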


\subsection{Higher Depth Quantum modular forms}
We next turn to generalizations of quantum modular forms.


\begin{definition}\label{defgen}
	A function $f:\mathcal Q\to\C$ ($\mathcal Q\subset \Q$) is called a {\it quantum modular form of depth $N\in\N$, weight $k \in \frac12\Z$, multiplier $\chi$, and quantum set $\mathcal Q$ for $\Gamma$} if for $M=\left(\begin{smallmatrix}
	a & b\\ c & d
	\end{smallmatrix}\right)\in\Gamma$
	
	$$
	f(\tau)-\chi(M)^{-1}(c\tau+d)^{-k}f(M\tau)\in  \bigoplus_j \mathcal Q_{\kappa_j}^{N_j}(\Gamma,\chi_j)\mathcal O(R),
	$$
	where $j$ runs through a finite set, $\kappa_j\in \frac12\Z$, $N_j \in \N$ with $\max_j(N_j)=N-1$, the $\chi_j$ are characters, $\mathcal O(R)$ is the space of real-analytic functions on $R \subset \R$ which contains an open subset of $\R$, $\mathcal Q_k^1(\Gamma,\chi) := \mathcal Q_k(\Gamma,\chi)$, $\mathcal Q_k^0(\Gamma,\chi):=1$, and $\mathcal Q_{k}^{N}(\Gamma,\chi)$ denotes the space of quantum modular forms of weight $k$, depth $N$, multiplier $\chi$ for $\Gamma$.
\end{definition}

\begin{remark}
	Again one can consider {\it higher depth strong quantum modular forms} by looking at asymptotic expansions instead of values. The examples of this paper satisfiy this stronger property.
\end{remark}

\begin{example} For $f_1 \in \mathcal Q_{k_1}^1(\Gamma_1,\chi_1)$ and $f_2 \in  \mathcal Q_{k_2}  ^1(\Gamma_2,\chi_2)$, we have that  $f_1  f_2 \in Q_{k_1+k_2}^2(\Gamma_1 \cap \Gamma_2,\chi_1 \chi_2)$.
\end{example}

\section{A rank two false theta function}

We briefly recall a construction from \cite{BM16,CM, FT}. For $p \in \mathbb{N}_{\geq 2}$, there is a vertex operator algebra $W(p)_{A_2}$ associated to the simple Lie algebra $\frak{sl}_3$ (more precisely, to its root lattice of type $A_2$). The  
character formula of $W(p)_Q$, where $Q$ is any ADE root lattice, was proposed in \cite{FT} (note that some arguments in \cite{FT} 
are not completely rigorous) and further studied in  \cite{BM16,CM,FT}; see also \cite{Adamovic}. Letting $\zeta_j:=e^{2\pi iz_j}$, we have \cite{BM16,CM}
\begin{align*}
&\eta(\tau)^2 {\rm ch}[{W(p)_{A_2}}](\tau,\boldsymbol{z})
=\!\!\!\!\!\sum_{m_1, m_2\in\Z}\!\!\!\!\!\frac{q^{p\left(\left(m_1-\frac{1}{p}\right)^2+\left(m_2-\frac{1}{p}\right)^2-\left(m_1-\frac{1}{p}\right)\left(m_2-\frac{1}{p}\right)\right)}}{\left(1-\zeta_1^{-1}\right)\left(1-\zeta_2^{-1}\right)\left(1-\zeta_1^{-1}\zeta_2^{-1}\right)}\left(\zeta_1^{m_1-1} \zeta_2^{m_2-1} \!\!-\zeta_1^{-m_1+m_2-1}\! \right.\\
&\qquad\quad\times \zeta_2^{m_2-1}-\zeta_1^{m_1-1}\zeta_2^{-m_2+m_1-1}\!+\left.\zeta_1^{-m_2-1} \zeta_2^{-m_2+m_1-1}\!+\zeta_1^{-m_1+m_2-1} \zeta_2^{-m_1-1}\!-\zeta_1^{-m_2-1}\zeta_2^{-m_1-1}\right).
\end{align*}
The six term expression in the numerator comes from the summation over the Weyl group $W $ of $\frak{sl}_3$ which is isomorphic to $S_3$.
Thanks to Weyl's character formula, the rational $\boldsymbol{z}$-part is in fact a Laurent polynomial.
There are two
important operations on this character:
\begin{itemize}
\item[(1)] taking the limit $\boldsymbol{z}=(z_1,z_2) \to (0,0)$, yielding a modular form \cite{BM16};
\item[(2)]  taking the constant term
$${\rm ch}[W^0(p)_{A_2}](\tau):= {\rm CT}_{\zeta_1,\zeta_2} {\rm ch}[{W(p)_{A_2}}](\tau,\boldsymbol{z}),$$ which computes the character of another vertex algebra. It was shown in \cite{BM16} that
\begin{equation*}
{\rm ch}[{W}^0(p)_{A_2}](\tau)=\frac{F(q)}{\eta(\tau)^2} .
\end{equation*}
\end{itemize}

Note that formulas like  $\eta(\tau)^{{\rm rank}(Q)}{\rm ch}[{W}^0(p)_{Q}](\tau)$, where $Q$ is any root lattice, are of interest
beyond vertex algebra theory \cite{BM16,CM}. The coefficients appearing in the $q$-expansion are essentially dimensions of the zero
weight spaces of finite-dimensional irreducible representations of simple Lie algebras (for the recent progress in understanding these numbers see  \cite{KP}).

\begin{remark} Modular-type properties of regularized (or Jacobi) characters, in particular ${\rm ch}[W^0(p)^\varepsilon_{A_2}](\tau)$, were investigated in \cite{CM} (see also \cite{CM2014}). There are two important differences between the current work and \cite{CM}. In this paper, the value of the Jacobi parameter $\varepsilon$  is always zero whereas in \cite{CM} it is necessarily non-zero. Secondly, there seems to be no clear connection between transformation formulas appearing in \cite{CM} and mock modular forms. On the other hand, here we make this connection quite explicit by virtue of generalized Eichler integrals (see Section 5).  \end{remark}

Let $n_1=m_1-m_2, n_2=m_2$ in \eqref{thetaLie} and then change $n_1\mapsto 3n_1$. Then we have, with $F$ given in \eqref{thetaLie}, 
\begin{align*}
\frac12F(q)=f_1(q)+f_2(q)+f_3(q),
\end{align*}
%
where, with  $Q(\boldsymbol{x}):=3x_1^2 +3x_1x_2+x_2^2$, we define
\begin{align*}
&f_1(q):= q^{\frac{1}{p}}\sideset{}{^\ast}\sum_{n_1,n_2\geq 0} n_2q^{pQ(\boldsymbol{n})}\left(q^{-3n_1-2n_2}-q^{3n_1+2n_2}\right),\\
&f_2(q):=q^{\frac{1}{p}}\sideset{}{^\ast}\sum_{n_1,n_2\geq 0} n_2q^{pQ(\boldsymbol{n})}\left(q^{n_2}-q^{-n_2}\right), \\
&f_3(q):=q^{\frac{1}{p}}\sideset{}{^\ast}\sum_{n_1,n_2\geq 0} n_2q^{pQ(\boldsymbol{n})}\left(q^{3n_1+n_2}-q^{-3n_1-n_2}\right).
\end{align*}
Here $\sum^\ast$ means that the $n_1=0$ term is weighted by $\frac12$.
We then rewrite
\begin{align*}
f_1(q)
&=-\sum_{n_1, n_2\geq 0}\left(n_2+\frac1p\right) q^{p Q\left(n_1+1, n_2+\frac1p\right)}
+\sum_{n_1, n_2\geq 0}\left(n_2+1-\frac1p\right) q^{p Q\left(n_1, n_2+1-\frac1p\right)}\\
&\quad+\frac1p \sum_{n_1, n_2\geq 0}q^{pQ\left(n_1+1, n_2+\frac1p\right)}+\frac1p  \sum_{n_1, n_2\geq 0}q^{pQ\left(n_1, n_2+1-\frac1p\right)}-\frac12\sum_{m\geq 0}\left(m+\frac1p\right)q^{p\left(m+\frac1p\right)^2}\\
&\quad-\frac12\sum_{m\geq 0}\left(m+1-\frac1p\right)q^{p\left(m+1-\frac1p\right)^2}+\frac1{2p}\sum_{m\ge 0} q^{p\left(m+\frac1p\right)^2}-\frac1{2p}\sum_{m\ge 0} q^{p\left(m+1-\frac1p\right)^2},\\
f_2(q)
&=\sum_{n_1, n_2\geq 0}\left(n_2+\frac2p\right) q^{p Q\left(n_1+1-\frac1p, n_2+\frac2p\right)}
-\sum_{n_1, n_2\geq 0}\left(n_2+1-\frac2p\right) q^{p Q\left(n_1+\frac1p, n_2+1-\frac2p\right)}\\
&\quad-\frac2p\sum_{n_1, n_2\geq 0}q^{p Q\left(n_1+1-\frac1p, n_2+\frac2p\right)}
-\frac2p\sum_{n_1, n_2\geq 0}q^{p Q\left(n_1+\frac1p, n_2+1-\frac2p\right)}\\
&\quad+\frac{q^{\frac{3}{4p}}}{2}\sum_{m\geq 1} m q^{p\left(m-\frac1{2p}\right)^2}+\frac{q^{\frac{3}{4p}}}{2}\sum_{m\geq 1} m q^{p\left(m+\frac1{2p}\right)^2},\\
f_3(q)
&=\sum_{n_1, n_2\geq 0}\left(n_2+1-\frac1p\right) q^{pQ\left(n_1+\frac1p, n_2+1-\frac1p\right)}
-\sum_{n_1, n_2\geq 0}\left(n_2+\frac1p\right) q^{pQ\left(n_1+1-\frac1p, n_2+\frac1p\right)}\\
&\quad+\frac1p \sum_{n_1, n_2\geq 0} q^{pQ\left(n_1+\frac1p, n_2+1-\frac1p\right)}
+\frac1p \sum_{n_1, n_2\geq 0} q^{pQ\left(n_1+1-\frac1p, n_2+\frac1p\right)}\\
&\quad-\frac{q^{\frac{3}{4p}}}{2}\sum_{m\geq 1} m q^{p\left(m+\frac1{2p}\right)^2}-\frac{q^{\frac{3}{4p}}}{2}\sum_{m\geq 1}m q^{p\left(m-\frac1{2p}\right)^2}.
\end{align*}
We thus obtain 
\[
F(q)=\frac2pF_1\left(q^p\right)+2F_2\left(q^p\right)
\]
with
\begin{align}\label{defineF1}
F_1(q):=\sum_{\boldsymbol{\alpha}\in\mathscr S}\varepsilon(\boldsymbol{\alpha})\sum_{\boldsymbol{n}\in\boldsymbol{\alpha} +\N_0^2}q^{Q(\boldsymbol{n})}+\frac12 \sum_{m\in\Z}\sgn\left(m+\frac1p\right)q^{\left(m+\frac1p\right)^2},
\end{align}
where
\begin{align*}
\mathscr{S}:=&\left\{\left(1-\frac{1}{p},\frac{2}{p}\right),\left(\frac{1}{p},1-\frac{2}{p}\right),\left(1,\frac{1}{p}\right)\left(0,1-\frac{1}{p}\right),\left(\frac{1}{p},1-\frac{1}{p}\right),\left(1-\frac{1}{p},\frac{1}{p}\right)\right\},
\end{align*}
and for $\boldsymbol{\alpha}\pmod{\Z^2}$, we set
\begin{align*}
\varepsilon(\boldsymbol{\alpha}):=&\begin{cases} -2 \qquad&\text{if } \boldsymbol{\alpha} \in\left\{\left(1-\frac{1}{p},\frac{2}{p}\right),\left(\frac{1}{p},1-\frac{2}{p}\right)\right\},\\ 1 &\text{otherwise}.\end{cases}
\end{align*}
Moreover
\begin{align}\label{defineF2}
F_2(q)&:=\sum_{\boldsymbol{\alpha}\in\mathscr{S}}\eta(\boldsymbol{\alpha})\sum_{\boldsymbol{n}\in\boldsymbol{\alpha}+\N_0^2} n_2 q^{Q(\boldsymbol{n})}-\frac12\sum_{m\in\Z}\left|m+\frac1p\right|q^{\left(m+\frac1p\right)^2},
\end{align}
where for $\boldsymbol{\alpha}\pmod{\Z^2}$, we let
\begin{align*}
\eta(\boldsymbol{\alpha})&:=
\begin{cases}
1&\quad\text{ if } \boldsymbol{\alpha} \in\left\{\left(1-\frac1p, \frac2p\right), \left(0, 1-\frac1p\right), \left(\frac1p, 1-\frac1p\right)\right\},\\
-1&\quad\text{ otherwise.}
\end{cases}
\end{align*}

\section{Asymptotic behavior of $F_1$ and $F_2$}\label{sec:asympF1}

In this section we determine the asymptotic behavior of $F(e^{2\pi i\frac{h}{k}-t})$ ($h,k\in\Z$ with $k>0$ and $\gcd(h,k)=1$) as $t\to0^+$ and in particular show that the limit exists.

\subsection{The function $F_1$}\label{functionf1}
We decompose
$$
F_1(q) = F_{1,1}(q) + F_{1,2}(q),
$$
where
\begin{align*}
F_{1,1}(q)&:= \sum_{\boldsymbol{\alpha}\in\mathscr S}\varepsilon(\boldsymbol{\alpha}) \sum_{\boldsymbol{n}\in\boldsymbol{\alpha}+\N_0^2}q^{Q(\boldsymbol{n})},\qquad
F_{1,2}(q):= \frac12\sum_{m\in\frac1p+\Z}\sgn(m)q^{m^2}.
\end{align*}
We first study the asymptotic behavior of $F_{1,1}$, rewriting it in a shape in which we can apply the Euler-Maclaurin formula \eqref{EulerMcLaurin}. For this, let $\boldsymbol{n}\mapsto \boldsymbol{\ell} + \boldsymbol{n}\frac{kp}{\delta}$ with $\boldsymbol{n}\in\N_0^2$, $0\leq \boldsymbol{\ell} \leq \frac{kp}{\delta}-1$, where $\delta:=\gcd(h,p)$. Here by the inequality we mean that it should hold componentwise. It is not hard to see that, with $\mathcal F_1(\boldsymbol{x}):= e^{-Q(\boldsymbol{x})}$,
\begin{equation*}
F_{1,1}\left(e^{2\pi i\frac{h}{k}-t}\right)=\sum_{\boldsymbol{\alpha} \in \mathscr{S}} \varepsilon(\boldsymbol{\alpha}) \sum_{0 \leq \boldsymbol{\ell} \leq \frac{kp}{\delta}-1}e^{2\pi i \frac{h}{k} Q \left(\boldsymbol{\ell}+\boldsymbol{\alpha} \right)}\sum_{\boldsymbol{n}\in\frac{\delta}{kp}(\boldsymbol{\ell}+\boldsymbol{\alpha})+\N_0^2}\mathcal F_1\left(\frac{kp}{\delta}\sqrt{t}\boldsymbol{n}\right).
\end{equation*}

The main term in \eqref{EulerMcLaurin} is then
\begin{equation}\label{main}
\frac{\delta^2}{k^2p^2 t}\mathcal I_{\mathcal F_1} \sum_{\boldsymbol{\alpha} \in \mathscr{S}}\varepsilon(\boldsymbol{\alpha}) \sum_{0 \leq \boldsymbol{\ell} \leq \frac{kp}{\delta}-1}e^{2\pi i \frac{h}{k}Q\left(\boldsymbol{\ell}+\boldsymbol{\alpha}\right)}.
\end{equation}
It is not hard to see that one may let $\boldsymbol{\ell}$ run modulo $\frac{kp}{\delta}$ (again meant componentwise). We write $\boldsymbol{\ell}=\boldsymbol{N}+k\boldsymbol{\nu}$ with $\boldsymbol{N}$ running modulo $k$, $\boldsymbol{\nu}$ modulo $\frac{p}{\delta}$, and $\boldsymbol{a}\in\{\left(-1,2\right),\left(1,-2\right),\left(0,1\right),\left(0,-1\right),\left(1,-1\right),$ $\left(-1,1\right)\}$ such that $\boldsymbol{\alpha}-\frac{\boldsymbol{a}}{p}\in\Z^2$. We then compute that the sum over $\boldsymbol{\ell}$ in \eqref{main} equals (since $Q(\boldsymbol{a})=1$)
\begin{align*}
e^{2\pi i\frac{h}{p^2k}}\sum_{\boldsymbol{N}\pmod{k}}e^{\frac{2\pi ih}{pk}
	\left(3\left(pN_1^2+2a_1N_1\right)+3\left(pN_1N_2+a_2N_1+a_1N_2\right)+pN_2^2+2a_2N_2\right)}\\
\qquad\times\sum_{\boldsymbol{\nu} \pmod{\frac{p}{\delta}}}e^{\frac{2\pi ih/\delta}{p/\delta}
	\left((6a_1+3a_2)\nu_1+(2a_2+3a_1)\nu_2\right)}.
\end{align*}
Since $\text{gcd}(\frac{h}{\delta},\frac{p}{\delta})=1$, the inner sum vanishes unless $\frac{p}{\delta} |\, 3(2a_1+a_2)$ and $\frac{p}{\delta} |\ (2a_2+3a_1).$ If $3|\frac{p}{\delta}$, then in particular $3\mid a_2$. This is however not satisfied for elements in $\mathscr{S}$.
If $3\nmid\frac{p}{\delta}$, then we easily obtain that $a_1\equiv a_2\equiv 0\pmod{\frac{p}{\delta}}$, implying that $\frac{p}{\delta} =1$.
We are thus left to show that $(\frac{p}{\delta}=1)$
\begin{equation}\label{sums}
\sum_{\boldsymbol{\alpha}\in\mathscr{S}} \varepsilon\left(\boldsymbol{\alpha}\right) \sum_{\boldsymbol{N}\pmod{k}}e^{\frac{2\pi ih/\delta}{k}
	\left(3\left(pN_1^2+2a_1N_1\right)+3\left(pN_1N_2+a_2N_1+a_1N_2\right)+pN_2^2+2a_2N_2\right)}=0.
\end{equation}
Changing $\boldsymbol{N}\mapsto \boldsymbol{N}-\boldsymbol{a}\overline{p}$, with $\overline{p}$ the inverse of $p$ modulo $k$ (note that $\frac{p}{\delta}=1$ implies that $\gcd(p,k)=1$), the sum on $\boldsymbol{N}$ equals
$$
e^{-\frac{2\pi i \overline{p} h\slash\delta }{k}}\sum_{\boldsymbol{N}\pmod{k}}e^{\frac{2\pi ih}{k}
	Q(\boldsymbol{N})},
$$
which is independent of $\boldsymbol{a}$.
Thus \eqref{sums} holds.

The second term in \eqref{EulerMcLaurin} is
\begin{equation}\label{second2}
-\sum_{\boldsymbol{\alpha}\in\mathscr{S}}\varepsilon (\boldsymbol{\alpha})\sum_{0\leq\boldsymbol{\ell}\leq\frac{kp}{\delta}-1}e^{2\pi i\frac{h}{k}Q\left(\boldsymbol{\ell}+\boldsymbol{\alpha}\right)}\sum_{n_2\geq 0}\frac{B_{n_2+1}\left(\frac{\delta\left(\ell_2+{\alpha_2}\right)}{kp}\right)}{(n_2+1)!}\int_0^\infty \mathcal F_1^{(0, n_2)}(x_1, 0)dx_1\left(\frac{kp\sqrt{t}}{\delta}\right)^{n_2-1}.
\end{equation}
We claim that the contribution from those $n_2$ which are even vanishes. This follows, once we show that, for $\boldsymbol{\alpha}\in\mathscr{S}$,
\begin{equation*}
\sum_{0\leq\boldsymbol{\ell}\leq\frac{kp}{\delta}-1}\bigg(e^{2\pi i\frac{h}{k}Q\left(\boldsymbol{\ell}+\boldsymbol{\alpha}\right)}
B_{2n_2+1}\left(\frac{\delta\left(\ell_2+{\alpha_2}\right)}{kp}\right)\\+ e^{2\pi i\frac{h}{k}Q\left(\boldsymbol{\ell+1-\alpha}\right)}
B_{2n_2+1}\left(\frac{\delta\left(\ell_2+1-{\alpha_2}\right)}{kp}\right)\bigg)=0.
\end{equation*}
This is seen to be true by the change of variables $\boldsymbol{\ell} \mapsto -\boldsymbol{\ell} +(-1+\frac{kp}{\delta})\boldsymbol{1}$ for the second term.

Arguing in the same way for the contribution from $n_2$ odd, we obtain that \eqref{second2} equals
\begin{equation*}
-2\sum_{\boldsymbol{\alpha}\in\mathscr{S}^\ast}\varepsilon (\boldsymbol{\alpha})\sum_{0\leq\boldsymbol{\ell}\leq\frac{kp}{\delta}-1}e^{2\pi i\frac{h}{k}Q\left(\boldsymbol{\ell+\alpha}\right)}\sum_{n_2\ge 0}\frac{B_{2n_2+2}\left(\frac{\delta\left(\ell_2+{\alpha_2}\right)}{kp}\right)}{(2n_2+2)!}\int_0^\infty \mathcal F_1^{(0, 2n_2+1)}(x_1, 0)dx_1\left(\frac{k^2p^2}{\delta^2}t\right)^{n_2},
\end{equation*}
where
\[ \mathscr{S}^\ast := \left\{ \left(1-\frac1p,\frac2p\right),\left(0,1-\frac1p\right),\left(\frac1p,1-\frac1p\right)\right\}. \]
The third term in \eqref{EulerMcLaurin} is treated in the same way, yielding the contribution
\begin{equation*}
-2 \sum_{\boldsymbol{\alpha}\in\mathscr{S}^\ast}\varepsilon (\boldsymbol{\alpha})\sum_{0\leq\boldsymbol{\ell} \leq\frac{kp}{\delta}-1}e^{2\pi i\frac{h}{k}Q\left(\boldsymbol{\ell+\alpha}\right)}\sum_{n_1\geq 0}\frac{B_{2n_1+2}\left(\frac{\delta\left(\ell_1+{\alpha_1}\right)}{kp}\right)}{(2n_1+2)!}\int_0^\infty \mathcal F_1^{(2n_1+1, 0)}(0, x_2)dx_2\left(\frac{k^2p^2}{\delta^2}t\right)^{n_1}.
\end{equation*}

The final term in \eqref{EulerMcLaurin} equals
\begin{multline*}
\sum_{\boldsymbol{\alpha}\in\mathscr{S}}\varepsilon (\boldsymbol{\alpha})\sum_{0\leq\boldsymbol{\ell}\leq\frac{kp}{\delta}-1}e^{2\pi i\frac{h}{k}Q\left(\boldsymbol{\ell+\alpha}\right)}\\
\times\sum_{n_1, n_2\geq 0} \frac{B_{n_1+1}\left(\frac{\delta\left(\ell_1+{\alpha_1}\right)}{kp}\right)}{(n_1+1)!} \frac{B_{n_2+1}\left(\frac{\delta\left(\ell_2+{\alpha_2}\right)}{kp}\right)}{(n_2+1)!}
\mathcal F_1^{(n_1, n_2)}(0, 0)\left(\frac{kp\sqrt{t}}{\delta}\right)^{n_1+n_2}.
\end{multline*}
Arguing in the same way as before this equals
\begin{align*}
& 2\sum_{\boldsymbol{\alpha}\in\mathscr{S}^\ast}\varepsilon (\boldsymbol{\alpha})\sum_{0\leq\ell\leq\frac{kp}{\delta}-1}e^{2\pi i\frac{h}{k}Q\left(\boldsymbol{\ell+{\alpha}}\right)}\\
&\times\sum_{n_1, n_2\geq 0\atop{n_1\equiv n_2\pmod{2}}} \frac{B_{n_1+1}\left(\frac{\delta\left(\ell_1+{\alpha_1}\right)}{kp}\right)}{(n_1+1)!} \frac{B_{n_2+1}\left(\frac{\delta\left(\ell_2+{\alpha_2}\right)}{kp}\right)}{(n_2+1)!}
\mathcal F_1^{(n_1, n_2)}(0, 0)\left(\frac{kp\sqrt{t}}{\delta}\right)^{n_1+n_2}.
\end{align*}

The function $F_{1,2}$ is treated similarly, yielding, with $\mathcal F_2(x):= e^{-x^2}$,
\begin{align*}
& -\sum_{0\leq r \leq \frac{kp}{\delta}-1} e^{2\pi i \frac{h}{k}\left(r+\frac1p\right)^2}\sum_{m\geq 0}\frac{ B_{2m+1}\left(\frac{\delta\left(r+\frac1p\right)}{kp}\right)}{(2m+1)!} \mathcal F_2^{(2m)}(0) \left(\frac{k^2p^2}{\delta^2}t\right)^m.
\end{align*}

\subsection{The function $F_2$}\label{functionf2}

Since the calculations are similar to those for $F_1$, we skip some of the details.
Decompose
$$
F_2(q) = F_{2,1}(q) + F_{2,2}(q),
$$
with
\begin{align*}
F_{2,1}(q)&:= \sum_{\boldsymbol{\alpha}\in\mathscr S} \eta(\boldsymbol{\alpha}) \sum_{\boldsymbol{n}\in\boldsymbol{\alpha}+\N_0^2}n_2q^{Q(\boldsymbol{n})},\qquad F_{2,2}(q):= -\frac12 \sum_{m\in\frac1p+\Z} |m|q^{m^2}.
\end{align*}

We first study the asymptotic behavior of $F_{2,1}$.
Arguing as for $F_{1,1}$, we have 
\[
F_2\left(e^{2\pi i\frac{h}{k}-t}\right)=\frac{1}{\sqrt{t}}\sum_{\boldsymbol{\alpha}\in\mathscr{S}}\!\!\eta(\boldsymbol{\alpha})\!\!\!\!\sum_{0\leq\boldsymbol{\ell}\leq \frac{kp}{\delta}-1} \!\!\!\!\!e^{2\pi i\frac{h}{k}Q\left(\boldsymbol{\ell+{\alpha}}\right)}
\!\!\sum_{n\in\frac{\delta}{kp}(\boldsymbol{\ell+\alpha})+\N_0^2}\mathcal G_1\left(\frac{kp}{\delta}\sqrt{t}\boldsymbol{n}\right),
\]
with $\mathcal G_1(\boldsymbol{x}):= x_2 \mathcal F_1(\boldsymbol{x})$. 
The Euler-Maclaurin main term is 
\[
\frac{1}{t^\frac32}\left(\frac{\delta}{kp}\right)^{2}\mathcal I_{\mathcal G_1}\sum_{\boldsymbol{\alpha}\in\mathscr{S}}\eta(\boldsymbol{\alpha})\sum_{\boldsymbol{\ell}\pmod{\frac{kp}{\delta}}}e^{2\pi i\frac{h}{k}Q(\boldsymbol{\ell+\alpha})}.
\]
As in Subsection \ref{functionf1}, one can show that this vanishes.

The second term in the Euler-Maclaurin summation formula is
\begin{equation}\label{second}
-2\sum_{\boldsymbol{\alpha}\in\mathscr{S}^\ast}\sum_{0\leq\boldsymbol{\ell}\leq \frac{kp}{\delta}-1}e^{2\pi i\frac{h}{k}Q\left(\boldsymbol{\ell+{\alpha}}\right)} \sum_{n_2\geq 1}\frac{B_{2n_2+1}\left(\frac{\delta(\ell_2+{\alpha_2})}{kp}\right)}{(2n_2+1)!}\int_0^\infty \mathcal G_1^{(0, 2n_2)}(x_1, 0)dx_1\left(\frac{kp}{\delta}\right)^{2n_2-1} t^{n_2-1},
\end{equation}
again pairing $\boldsymbol{\alpha}$ and $\boldsymbol{1-\alpha}$ and using that $\mathcal G_1(x_1,0)=0$.

In the same way we obtain that the third term in the Euler-Maclaurin summation formula is
\begin{equation}\label{thirdterm}
-2\sum_{\boldsymbol{\alpha}\in\mathscr S^*} \sum_{0\leq\boldsymbol{\ell}\leq \frac{kp}{\delta}-1}e^{2\pi i\frac{h}{k}Q(\boldsymbol{\ell+\alpha})}\sum_{n_1\geq 0}\frac{B_{2n_1+1}\left(\frac{\delta(\ell_1+\alpha_1)}{kp}\right)}{(2n_1+1)!}\int_0^\infty \mathcal G_1^{(2n_1,0)}(0,x_2)dx_2\left(\frac{kp}{\delta}\right)^{2n_1-1}t^{n_1-1}.
\end{equation}
The final term in Euler-Maclaurin evaluates as
\begin{multline*}
2\sum_{\boldsymbol{\alpha}\in\mathscr{S}^*} \sum_{0\leq \boldsymbol{\ell}\leq  \frac{kp}{\delta}-1} e^{2\pi i\frac{h}{k}Q\left(\boldsymbol{\ell+\alpha}\right)}\\
\times\sum_{n_1, n_2\ge 0\atop{n_1\not\equiv n_2\pmod{2}}}\!\!\!\frac{B_{n_1+1}\left(\frac{\delta(\ell_1+{\alpha_1})}{kp}\right)}{(n_1+1)!}\frac{B_{n_2+1}\left(\frac{\delta(\ell_2+{\alpha_2})}{kp}\right)}{(n_2+1)!}\mathcal G_1^{(n_1, n_2)}(0,0)\left(\frac{kp}{\delta}\right)^{n_1+n_2} t^{\frac{n_1+n_2-1}{2}},
\end{multline*}
again pairing $\boldsymbol{\alpha}$ with $\boldsymbol{1-\alpha}$.

We next determine those terms of $F_{2,1}$ that grow as $t\to 0^+$. Inspecting the terms above we see that this comes from the $n_1=0$ term of \eqref{thirdterm} and is given by
\begin{equation}\label{grow1}
-\frac{2\delta}{kpt}\sum_{\boldsymbol{\alpha}\in \mathscr S^\ast} \sum_{0\leq \boldsymbol{\ell}\leq \frac{kp}{\delta}-1} B_1\left(\frac{\delta(\ell_1+\alpha_1)}{kp}\right) e^{2\pi i\frac{h}{k} Q(\boldsymbol{\ell+\alpha})} \int_0^\infty \mathcal G_1(0,x_2)dx_2.
\end{equation}
Using that $\mathcal G_1 (0,x_2)=x_2 e^{-x_2^2}=:\mathcal G_2(x_2)$, we obtain that \eqref{grow1} equals
$$
-\frac{2\delta}{kpt} \mathcal{I}_{\mathcal{G}_2}\sum_{\boldsymbol{\alpha}\in\mathscr S^\ast}\sum_{0\leq\boldsymbol{\ell}\leq \frac{kp}{\delta}-1} B_1\left(\frac{\delta(\ell_1+\alpha_1)}{kp}\right)e^{2\pi i\frac{h}{k}Q(\boldsymbol{\ell+\alpha})}.
$$

Turning to $F_{2,2}$, its Euler-Maclaurin main term is
\begin{equation}\label{F2main}
-\frac{\delta}{kpt} \mathcal I_{\mathcal G_2} \sum_{r\pmod{\frac{kp}{\delta}}} e^{2\pi i \frac{h}{k}\left(r+\frac1p\right)^2}.
\end{equation}

Arguing as before, the second term in the Euler-Maclaurin summation formula equals
$$
\sum_{0\leq r \leq \frac{kp}{\delta}-1}e^{2\pi i\frac{h}{k}\left(r+\frac1p\right)^2} \sum_{m\geq 0} \frac{B_{2m+2}\left(\frac{\delta(\ell+\frac1p)}{kp}\right)}{(2m+2)!}\mathcal G_2^{(2m+1)}(0)\left(\frac{kp}{\delta}\right)^{2m+1} t^{m}.
$$
To see that all terms that grow as $t\to 0^+$ cancel, we need to prove that
\begin{equation}\label{sumsmatch}
\sum_{r\pmod{\frac{kp}{\delta}}}e^{2\pi i\frac{h}{k}\left(r+\frac{1}{p}\right)^2}=-2\sum_{\boldsymbol{\alpha}\in\mathscr S^\ast}\sum_{0\leq \boldsymbol{\ell}\leq \frac{kp}{\delta}-1}B_1\left(\frac{\delta(\ell_1+\alpha_1)}{kp}\right)e^{2\pi i\frac{h}{k}Q(\boldsymbol{\ell+\alpha})}.
\end{equation}
To show \eqref{sumsmatch}, we first assume that $\frac{p}{\delta}\not\in\{1,2\}$. Writing $\boldsymbol{\ell}=\boldsymbol{N}+k\boldsymbol{\nu}$, $0\leq \boldsymbol{N}< k$, $0\leq \boldsymbol{\nu}< \frac{p}{\delta}$ and $\boldsymbol{a}=p\boldsymbol{\alpha}$, we obtain that the sum on $\boldsymbol{\ell}$ equals
\begin{align}\label{splitsum}
e^{2\pi i\frac{h}{p^2k}Q(\boldsymbol{a})}&\sum_{0\leq \boldsymbol{N}< k}e^{\frac{2\pi ih}{pk}\left(3\left(pN_1^2+2a_1N_1\right)+3\left(pN_1N_2+a_2N_1+a_1N_2\right)+pN_2^2+2a_2N_2\right)}\notag\\&\qquad\times\sum_{0\leq \boldsymbol{\nu}< \frac{p}{\delta}} B_1\left(\frac{\delta\left(N_1+k\nu_1+\frac{a_1}{p}\right)}{kp}\right)e^{2\pi i \frac{h/\delta}{p/\delta} \left(\left(6a_1+3a_2\right)\nu_1+\left(2a_2+3a_1\right)\nu_2\right)}.
\end{align}
The sum on $\nu_2$ vanishes unless $\frac{p}{\delta}|(2a_2+3a_1)$. It is not hard to see that (under the assumption that $\frac{p}{\delta}\not\in\{1,2\}$) this is not satisfied for elements in $p\mathscr S^\ast$.

We next assume that $\frac{p}{\delta}=1$. It is not hard to see that
\begin{equation}\label{claimquad}
e^{2\pi i\frac{h}{k}Q\left(k-\ell_1-1+1-\frac1p,\ell_2+3\ell_1+1+\frac2p\right)} = e^{2\pi i\frac{h}{k}Q\left(\ell_1+\frac1p,\ell_2+1-\frac1p\right)}.
\end{equation}
This then implies that the contribution of the first and third element in $\mathscr S^*$ cancel due to a negative sign from the Bernoulli polynomial and we can shift the sum in $\ell_2$ by integers. Thus the right-hand side of \eqref{sumsmatch} becomes
\begin{equation}\label{onlysecondterm}
-2\sum_{0\leq\boldsymbol{\ell}<k}B_1\left(\frac{\ell_1}{k}\right)e^{2\pi i\frac{h}{k}Q\left(\ell_1,\ell_2+1-\frac1p\right)}.
\end{equation}
Now one can show that
\begin{equation}\label{secondtermcomp}
e^{2\pi i\frac{h}{k}Q\left(k-\ell_1,\ell_2+3\ell_1+1-\frac1p\right)}= e^{2\pi i\frac{h}{k}Q\left(\ell_1,\ell_2+1-\frac1p\right)}.
\end{equation}
To finish the claim \eqref{sumsmatch}, we assume, without loss of generality, that $k$ is odd. We split the sum in \eqref{onlysecondterm}, substitute $(\ell_1,\ell_2)\mapsto (k-\ell_1,\ell_2+3\ell_1)$ in the second part and use \eqref{secondtermcomp} to obtain
\begin{align*}
-2\sum_{0\leq\boldsymbol{\ell}<k}&B_1\left(\frac{\ell_1}{k}\right)
e^{2\pi i\frac{h}{k}Q\left(\ell_1,\ell_2+1-\frac1p\right)}
=-2\left(\sum_{\substack{0\leq \ell_1\leq \frac12(k-1)\\ \ell_2\pmod k}} + \sum_{\substack{\frac12(k+1)\leq \ell_1<k\\ \ell_2 \pmod k}}\right)
B_1\left(\frac{\ell_1}{k}\right)e^{2\pi i\frac{h}{k}Q\left(\ell_1,\ell_2+1-\frac1p\right)} \\
&=-2\sum_{\substack{0\leq \ell_1\leq \frac12(k-1)\\ \ell_2\pmod k}} B_1\left(\frac{\ell_1}{k}\right)
e^{2\pi i\frac{h}{k}Q\left(\ell_1,\ell_2+1-\frac1p\right)}
-2\sum_{\substack{0< \ell_1\leq\frac12(k-1)\\ \ell_2\pmod k}}
B_1\left(1-\frac{\ell_1}{k}\right)
e^{2\pi i\frac{h}{k}Q\left(\ell_1,\ell_2+1-\frac1p\right)}\\
&=-2B_1(0) \sum_{\ell_2\pmod k}e^{2\pi i\frac{h}{k}Q\left(0,\ell_2+1-\frac1p\right)}=\sum_{\ell_2\pmod k}e^{2\pi i\frac{h}{k}\left(\ell_2+1-\frac1p\right)^2}.
\end{align*}

The case $\frac{p}{\delta}=2$ is done similarly.

\section{Companions in the lower half plane}

In this section we investigate multivariable Eichler integrals.
\subsection{Multiple Eichler integrals}
Let $f_j\in S_{k_j}(\Gamma,\chi_j)$; if $k_j = \frac12$ we also allow $f_j\in M_{\frac12}(\Gamma,\chi_j)$. Define the {\it double Eichler integral}
\begin{align*}
\quad I_{f_1,f_2}(\tau) := \int_{-\overline{\tau}}^{i\infty} \int_{w_1}^{i\infty} \frac{f_1(w_1)f_2(w_2)}{(-i(w_1+\tau))^{2-k_1}(-i(w_2+\tau))^{2-k_2}} dw_2 dw_1,
\end{align*}
and the {\it multiple error of modularity}
\begin{align*}
r_{f_1,f_2,\frac{d}{c}}(\tau) := \int_{\frac{d}{c}}^{i\infty} \int_{w_1}^{\frac{d}{c}} \frac{f_1(w_1) f_2(w_2)}{(-i(w_1+\tau))^{2-k_1} (-i(w_2+\tau))^{2-k_2}} dw_2 dw_1.
\end{align*}

\begin{theorem}\label{quantheorem}
	We have, for $M=\left(\begin{smallmatrix}a&b\\c&d\end{smallmatrix}\right)\in\Gamma^\ast$,
	\begin{equation}\label{modtrans}
	I_{f_1,f_2}(\tau)-\chi_1^{-1}\left(M^\ast\right)\chi_2^{-1}\left(M^\ast\right)(c\tau+d)^{k_1+k_2-4}I_{f_1,f_2}(M\tau) = r_{f_1,f_2,\frac{d}{c}}(\tau) + I_{f_1}(\tau)r_{f_2,\frac{d}{c}}(\tau).
	\end{equation}
	Moreover $r_{f_1,f_2,\frac{d}{c}}\in\mathcal O(\R\backslash\{-\frac{d}{c}\})$.
	If $f_j\in S_{k_j}(\Gamma,\chi_j)$ (for $j=1,2$), then $r_{f_1,f_2, \frac{d}{c}}\in \mathcal O(\R)$.
\end{theorem}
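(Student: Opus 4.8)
The plan is to prove the transformation \eqref{modtrans} by substituting $w_j = M^\ast w_j'$ in the integral defining $I_{f_1,f_2}(M\tau)$, and then to read off the regularity of $r_{f_1,f_2,\frac dc}$ directly from its defining integral. Writing $M^\ast = \left(\begin{smallmatrix} a & -b \\ -c & d\end{smallmatrix}\right) = P^{-1}MP \in \Gamma$, two elementary identities drive the computation: $-\overline{M\tau} = M^\ast(-\overline{\tau})$ and
$$
-i\left(M^\ast w' + M\tau\right) = \frac{-i(w'+\tau)}{(-cw'+d)(c\tau+d)}.
$$
Together with the modularity $f_j(M^\ast w') = (-cw'+d)^{k_j}\chi_j(M^\ast)f_j(w')$ and the Jacobian $dw = (-cw'+d)^{-2}dw'$, every power of $(-cw'+d)$ cancels and only the scalar $\chi_1(M^\ast)\chi_2(M^\ast)(c\tau+d)^{4-k_1-k_2}$ survives. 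Tracking endpoints via $M^{\ast-1}(i\infty) = \frac dc$, the outer contour $w_1\colon -\overline{M\tau}\to i\infty$ becomes $w_1'\colon -\overline{\tau}\to\frac dc$ and, for fixed $w_1'$, the inner contour becomes $w_2'\colon w_1'\to\frac dc$. Hence $\chi_1^{-1}(M^\ast)\chi_2^{-1}(M^\ast)(c\tau+d)^{k_1+k_2-4}I_{f_1,f_2}(M\tau)$ equals the integral $J(\tau)$ of the same kernel as $I_{f_1,f_2}$ over the triangle $\{-\overline{\tau}\le w_1'\le\frac dc,\ w_1'\le w_2'\le\frac dc\}$.

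Next I would compute $I_{f_1,f_2}(\tau) - J(\tau)$ purely by splitting contours, using that the kernel factors as $g_1(w_1)g_2(w_2)$ with $g_j(w) := f_j(w)(-i(w+\tau))^{k_j-2}$. Splitting the outer $w_1$-integral at $\frac dc$ and each inner $w_2$-integral at $\frac dc$ decomposes $I_{f_1,f_2}(\tau)$ into four pieces: the triangle $J(\tau)$; the product $\big(\int_{-\overline{\tau}}^{\frac dc} g_1\big)\,r_{f_2,\frac dc}(\tau)$; the triangle $r_{f_1,f_2,\frac dc}(\tau)$; and the product $r_{f_1,\frac dc}(\tau)\,r_{f_2,\frac dc}(\tau)$. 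Writing $\int_{-\overline{\tau}}^{\frac dc} g_1 = I_{f_1}(\tau) - r_{f_1,\frac dc}(\tau)$, the two copies of $r_{f_1,\frac dc}r_{f_2,\frac dc}$ cancel, leaving exactly $r_{f_1,f_2,\frac dc}(\tau) + I_{f_1}(\tau)\,r_{f_2,\frac dc}(\tau)$, which is \eqref{modtrans}.

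For the regularity I would fix the contours to be the vertical ray $\frac dc + i\R_{\ge 0}$. For $\tau$ in a complex neighborhood of a point $\tau_0 \in \R\setminus\{-\frac dc\}$ the kernel zeros $w_j = -\tau$ stay off this ray, so the integrand is holomorphic in $\tau$; at the upper end each inner integral converges and is bounded (for $k_2=\frac12$ the factor $(-i(w_2+\tau))^{k_2-2}$ is integrable at $i\infty$ since $2-k_2=\frac32>1$, and for cuspidal $f_2$ the decay is exponential), and the same applies to the outer integration. Thus the double integral converges locally uniformly and differentiation under the integral sign gives $r_{f_1,f_2,\frac dc}\in\mathcal O(\R\setminus\{-\frac dc\})$. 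When both $f_j\in S_{k_j}(\Gamma,\chi_j)$, the only point left is $\tau_0=-\frac dc$: there the kernel degenerates like $t^{2-k_j}$ as $w_j = \frac dc + it \to \frac dc$, but a cusp form vanishes to infinite order at the cusp $\frac dc$, so the product stays integrable and real-analytic in $\tau$, yielding $r_{f_1,f_2,\frac dc}\in\mathcal O(\R)$.

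The main obstacle is the contour bookkeeping in the first step. After the substitution the transformed $w_2$-contour is the $M^{\ast-1}$-image of a vertical ray, namely a geodesic arc from $w_1'$ to $\frac dc$, rather than the straight contour defining $J$. Replacing the arc by the straight path requires Cauchy's theorem (the integrand is holomorphic in the enclosed region of $\H$, its only zero $w_2'=-\tau$ lying in the lower half-plane) together with a vanishing estimate near the shared cusp $\frac dc$. Simultaneously one must fix the branches of the half-integral powers $(-i(w+\tau))^{2-k_j}$ so that the automorphy-factor identity holds exactly; this is precisely where the multiplier values $\chi_j(M^\ast)$ are pinned down. Keeping orientations and branch choices consistent across the nested integral is the delicate part, while the splitting and the regularity estimates are routine.
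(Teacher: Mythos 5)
Your proposal is correct and follows essentially the same route as the paper: the substitution $w_j \mapsto M^\ast w_j'$ is exactly the paper's ``direct calculation'' yielding the integral over $\int_{-\overline{\tau}}^{d/c}\int_{w_1}^{d/c}$, your four-term contour splitting is equivalent to the paper's three-term splitting identity, and the regularity argument via exponential decay of cusp forms at $i\infty$ and at the cusp $\tfrac{d}{c}$ (with the weight-$\tfrac12$ polynomial bound in the non-cuspidal case) matches the paper's estimates $I_1, I_2, I_3$. Your added care about replacing geodesic arcs by straight paths and fixing branches of the half-integral powers addresses details the paper leaves implicit, but does not constitute a different method.
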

	
\begin{proof}[Proof of Theorem \ref{quantheorem}] For simplicity, we assume that $\frac12\leq k_j\leq2$ and that $f_1,f_2$ are cuspidal. The proof in the case that $f_1$ or $f_2$ are not cuspidal and of weight $\frac12$ is basically the same;
we then require the bound
\[
f_j\left(iw_j+\frac{d}{c}\right)\ll 1+w_j^{-\frac12}.
\] 
A direct calculation gives that, for  $M\in \Gamma^\ast$,

\begin{align*}
I_{f_1,f_2}(M\tau) &= \chi_1\left(M^*\right)\chi_2\left(M^*\right)(c\tau+d)^{4 -k_1 -k_2} \int_{-\overline{\tau}}^{\frac{d}{c}}\int_{w_1}^{\frac{d}{c}} \frac{f_1(w_1)f_2(w_2)}{(-i(w_1+\tau))^{2-k_1}(-i(w_2+\tau))^{2-k_2}}dw_2dw_1.
\end{align*}
The transformation \eqref{modtrans} now follows by splitting
\begin{align*}
\int_{-\overline{\tau}}^{\frac{d}{c}}\int_{w_1}^{\frac{d}{c}}
=\int_{-\overline{\tau}}^{i\infty}\int_{w_1}^{i\infty}
+\int_{\frac{d}{c}}^{i\infty}\int_{\frac{d}{c}}^{w_1}
-\int_{-\overline{\tau}}^{i\infty}\int_{\frac{d}{c}}^{i\infty}.
\end{align*}

Using Lemma \ref{lquant}, we are left to show that $r_{f_1,f_2,\frac{d}{c}}$ is real-analytic on $\R$ which follows once we prove that the following function is real-analytic
\begin{equation}\label{errorint}
\int_0^\infty \int_0^{w_1} \frac{f_1\left(iw_1+\frac{d}{c}\right)f_2\left(iw_2+\frac{d}{c}\right)}{\left(w_1-i\left(\tau+\frac{d}{c}\right)\right)^{2-k_1}\left(w_2-i\left(\tau+\frac{d}{c}\right)\right)^{2-k_2}}dw_2dw_1.
\end{equation}
We use that for $w_j\geq 1$
\begin{equation}\label{boundbig}
f_j\left(iw_j+\frac{d}{c}\right)\ll e^{-a_jw_j}\qquad a_j\in\R^+,
\end{equation}
and for $0< w_j\leq 1$ (the implied constant and $b_j$ may depend on $c$)
\begin{equation}\label{boundsmall}
f_j\left(iw_j+\frac{d}{c}\right)\ll w_j^{-k_j} e^{-\frac{b_j}{w_j}}\qquad b_j\in\R^+.
\end{equation}

To show real-analycity of \eqref{errorint} on $\R$, we split it into 3 pieces. Firstly, set
$$
I_1:=\int_1^\infty \int_1^{w_1} \frac{f_1\left(iw_1+\frac{d}{c}\right)f_2\left(iw_2+\frac{d}{c}\right)}{\left(w_1-i\left(\tau+\frac{d}{c}\right)\right)^{2-k_1}\left(w_2-i\left(\tau+\frac{d}{c}\right)\right)^{2-k_2}}dw_2dw_1.
$$
Using \eqref{boundbig} and that $w_1\geq 1$ easily gives the locally uniform bound
$$
I_1\ll \int_1^\infty \frac{e^{-a_1w_1}}{w_1^{2-k_1}} dw_1 \int_1^\infty \frac{e^{-a_2w_2}}{w_2^{2-k_2}}dw_2 \ll 1.
$$

Next consider
$$
I_2:=\int_0^1\int_0^{w_1}\frac{f_1\left(iw_1+\frac{d}{c}\right)f_2\left(iw_2+\frac{d}{c}\right)}{\left(w_1-i\left(\tau+\frac{d}{c}\right)\right)^{2-k_1}\left(w_2-i\left(\tau+\frac{d}{c}\right)\right)^{2-k_2}}dw_2dw_1.
$$
Using \eqref{boundsmall} gives that 
$$
I_2\ll \int_0^1\frac{e^{-\frac{b_1}{w_1}}}{w_1^{2}}dw_1\int_0^1\frac{e^{-\frac{b_2}{w_2}}}{w_2^{2}}dw_2 \ll 1.
$$

Finally, we set
$$
I_3:=\int_1^\infty \frac{f_1\left(iw_1+\frac{d}{c}\right)}{\left(w_1-i\left(\tau+\frac{d}{c}\right)\right)^{2-k_1}}dw_1\int_0^1 \frac{f_2\left(iw_2+\frac{d}{c}\right)}{\left(w_2-i\left(\tau+\frac{d}{c}\right)\right)^{2-k_2}}dw_2.
$$
Combining the above bounds gives again $I_3\ll 1$.

\end{proof}

\subsection{Special multiple Eichler integrals of weight one}

Define for $\boldsymbol{\alpha} \in \mathscr{S}^\ast$
\begin{align*}
\mathcal E_{1,\boldsymbol{\alpha}}(\tau):=-\frac{\sqrt{3}}{4}
\int_{-\overline{\tau}}^{i\infty}
\int_{w_1}^{i\infty}
\frac{\theta_1(\boldsymbol{\alpha};\boldsymbol{w})+\theta_2(\boldsymbol{\alpha};\boldsymbol{w})}{\sqrt{-i(w_1+\tau)}\sqrt{-i(w_2+\tau)}}
dw_2 dw_1
\end{align*}
with
\begin{align*}
\theta_1(\boldsymbol{\alpha};\boldsymbol{w})&:=\sum_{\boldsymbol{n}\in\boldsymbol{\alpha}+\Z^2}(2n_1+n_2)n_2
e^{\frac{3\pi i}{2}(2n_1+n_2)^2w_1
	+ \frac{\pi in_2^2w_2}{2}},
\\
\theta_2(\boldsymbol{\alpha};\boldsymbol{w})&:=\sum_{\boldsymbol{n}\in \boldsymbol{\alpha}+\Z^2}(3n_1+2n_2)n_1
e^{\frac{\pi i}{2}(3n_1+2n_2)^2w_1
	+ \frac{3\pi in_1^2w_2}{2}}.
\end{align*}
Moreover set 
\begin{align}\label{defineE1}
\mathcal{E}_1(\tau)&:= \sum_{\boldsymbol{\alpha}\in \mathscr{S}^\ast}\varepsilon(\boldsymbol{\alpha})\mathcal{E}_{1,\boldsymbol{\alpha}}(p\tau),\\\notag
\Gamma_p&:=\left\{\left(\begin{matrix}
a & b \\ c & d
\end{matrix}\right)\in\Gamma_0(12p):b\equiv 0\pmod{4p}, d\equiv \pm 1\pmod{2p}\right\}.
\end{align}
\begin{remark}
	Note that $\Gamma_p^\ast = \Gamma_p.$
\end{remark}
\begin{remark}
One can show that
	\begin{equation*}
	\mathcal E_1(\tau) = -\frac{\sqrt{3}}{4p} \sum_{\delta\in\{0,1\}} I_{\Theta_1(2p,1+p\delta,2p;\,\cdot\,),\Theta_1(6p,3+3p\delta,6p;\,\cdot\,)}(\tau).
	\end{equation*}
	However, as this representation is not required for the remainder of the paper, we do not provide a proof of this identity.
\end{remark}
\begin{proposition}\label{quantE}
	We have, for $M=\left(\begin{smallmatrix}a&b\\c&d\end{smallmatrix}\right)\in\Gamma_p$,
	$$
	\mathcal E_1(\tau) - \left(\frac{-3}{d}\right) (c\tau+d)^{-1}\mathcal E_1(M\tau) = \sum_{j=1}^{12} \left(r_{f_j,g_j,\frac{d}{c}}(\tau)+I_{f_j}(\tau) r_{g_j,\frac{d}{c}}(\tau)\right),
	$$
	where $f_j,g_j$ are cusp forms of weight $\frac32$ (with some multiplier).
\end{proposition}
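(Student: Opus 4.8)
The plan is to reduce the statement to Theorem~\ref{quantheorem} by exhibiting $\mathcal E_1$ as a finite linear combination of double Eichler integrals $I_{f_j,g_j}$ attached to pairs of weight-$\frac32$ cusp forms, and then transforming term by term. The first step is a \emph{separation of variables} in the rank-two theta objects $\theta_1,\theta_2$. In $\theta_1(\boldsymbol{\alpha};\boldsymbol{w})$ the variables $w_1,w_2$ are coupled only through the linear forms $2n_1+n_2$ and $n_2$; substituting $a=2n_1+n_2$, $b=n_2$ turns the summand $(2n_1+n_2)n_2\,e^{\frac{3\pi i}{2}(2n_1+n_2)^2w_1+\frac{\pi i}{2}n_2^2w_2}$ into a product $a\,b\,q_1^{\frac34 a^2}q_2^{\frac14 b^2}$, and the only obstruction to factoring the double sum is the congruence $a-b=2(\alpha_1+\ell_1)\in 2\Z$ forced by the lattice. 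Splitting according to the parity of $b$ (and, for $\theta_2$, via the analogous forms $3n_1+2n_2$ and $n_1$, by the parity of $n_1$)—this is precisely the sum over $\delta\in\{0,1\}$ in the Remark following \eqref{defineE1}—writes each of $\theta_1,\theta_2$ as a sum of two products of Shimura theta functions $\Theta_1(A,h,N;\cdot)$ of weight $\frac32$. Running over the three $\boldsymbol{\alpha}\in\mathscr S^\ast$ yields $3\times 2\times 2=12$ pairs $(f_j,g_j)$, so that $\mathcal E_1(\tau)$ is a constant multiple of $\sum_{j=1}^{12} I_{f_j,g_j}(p\tau)$ (the overall constant $-\tfrac{\sqrt3}{4p}$ can be absorbed into the $f_j$).

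Next I would apply Theorem~\ref{quantheorem} to each $I_{f_j,g_j}$ with $k_1=k_2=\frac32$, so that $k_1+k_2-4=-1$ and the cocycle weight is $(c\tau+d)^{-1}$, matching the weight-one claim. Because $\mathcal E_1$ is built from $\mathcal E_{1,\boldsymbol\alpha}(p\tau)$, one must first check that the rescaling $\tau\mapsto p\tau$—conjugation by $\mathrm{diag}(p,1)$, sending $M=\left(\begin{smallmatrix}a&b\\c&d\end{smallmatrix}\right)$ to $\left(\begin{smallmatrix}a&pb\\c/p&d\end{smallmatrix}\right)$—carries $M\in\Gamma_p$ into the group for which the relevant $\Theta_1$ transform via \eqref{Shimura1}; note the cocycle $((c/p)(p\tau)+d)^{-1}=(c\tau+d)^{-1}$ comes out correctly. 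The defining conditions of $\Gamma_p$ ($12p\mid c$, $b\equiv 0\pmod{4p}$, $d\equiv\pm1\pmod{2p}$, together with $\Gamma_p^\ast=\Gamma_p$) are tailored precisely so that each $f_j,g_j$ lies in $S_{\frac32}(\Gamma_p,\chi_j)$ for a common $\Gamma_p$ and Theorem~\ref{quantheorem} applies uniformly.

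The main work, and the step I expect to be the chief obstacle, is the multiplier bookkeeping. Each application of Theorem~\ref{quantheorem} contributes a factor $\chi_{f_j}^{-1}(M^\ast)\chi_{g_j}^{-1}(M^\ast)$, which by \eqref{Shimura1} is a product of Jacobi symbols $\left(\frac{2Ac}{d}\right)$, factors $\varepsilon_d^{-1}$, and root-of-unity phases $e\!\left(\frac{abAh^2}{2N^2}\right)$ depending on the data $(A,h,N)$ of the two theta factors. One must show that for every one of the $12$ terms this product collapses to the \emph{same} value $\left(\frac{-3}{d}\right)$; only then can it be pulled outside the sum to give the clean transformation in the statement. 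This demands quadratic-reciprocity manipulations of the Jacobi symbols combined with the congruences imposed by $\Gamma_p$ and the residue ($\delta$) splitting, and it is here that the specific shape of $Q$ and of $\mathscr S^\ast$ enters decisively. Once the multipliers are shown to agree, summing the $12$ identities produces the left-hand side as stated, while the right-hand side assembles into $\sum_{j=1}^{12}\bigl(r_{f_j,g_j,\frac{d}{c}}(\tau)+I_{f_j}(\tau)r_{g_j,\frac{d}{c}}(\tau)\bigr)$; real-analyticity of the error pieces on $\R$ is automatic from the cuspidal case of Theorem~\ref{quantheorem}.
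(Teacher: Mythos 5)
Your proposal follows the paper's proof essentially verbatim: the paper likewise splits $\theta_1,\theta_2$ by parity into products of Shimura theta functions (six pairs from each $\theta_i$, indexed by the sets $\mathcal A$ and $\mathcal B$), rewrites $\mathcal E_1$ as $-\frac{\sqrt3}{4p}$ times the resulting twelve double Eichler integrals, and applies Theorem \ref{quantheorem} together with \eqref{Shimura1}. The multiplier bookkeeping you flag as the chief obstacle is exactly the paper's closing step, and it is easier than you anticipate: each of the twelve terms pairs one theta function scaled by $3$ with one scaled by $1$, so its multiplier is $\left(\tfrac{3pc}{d}\right)\left(\tfrac{pc}{d}\right)\varepsilon_d^{-2}=\left(\tfrac{-3}{d}\right)$ uniformly, with the phases $e\!\left(\tfrac{abAh^2}{2N^2}\right)$ killed by $b\equiv 0\pmod{4p}$ and the $\pm$ signs (from $d\equiv\pm1\pmod{2p}$) cancelling in pairs -- no quadratic reciprocity is required.
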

\begin{proof}
To use Theorem \ref{quantheorem}, we write $\theta_j$ in terms of Shimura's theta functions \eqref{shimura}.
For $\theta_1$, we set $\nu_1:=2n_1+n_2$, $\nu_2:=n_2$. Then $\nu_1\in2{\alpha_1}+{\alpha_2}+\Z$, $\nu_2\in{\alpha_2}+\Z$,  and $\nu_1-\nu_2\in2{\alpha_1}+2\Z$ and we obtain 
\begin{align*}
\theta_1(\boldsymbol{\alpha};\boldsymbol{w})&=\sum_{\substack{\boldsymbol{\nu}\in(2{\alpha_1}+{\alpha_2},{\alpha_2})+\Z^2\\\nu_1-\nu_2\in2{\alpha_1}+2\Z}}\nu_1 \nu_2e^{\frac{3\pi i\nu_1^2w_1}{2}+\frac{\pi i\nu_2^2w_2}{2}}\\
&=\sum_{\varrho\in\{0,1\}}\sum_{\nu_1\in2{\alpha_1}+{\alpha_2}+\varrho+2\Z}\nu_1 e^{\frac{3\pi i\nu_1^2w_1}{2}}\sum_{\nu_2\in{\alpha_2}+\varrho+2\Z} \nu_2 e^{\frac{\pi i\nu_2^2w_2}{2}}.
\end{align*}
Summing then easily gives
\begin{align*}
\sum_{\boldsymbol{\alpha}\in\mathscr S^*} \varepsilon\left(\boldsymbol{\alpha}\right) \theta_1(\boldsymbol{\alpha};\boldsymbol{w})
&=\frac{1}{p^2}\sum_{\boldsymbol{A}\in\mathcal A}\varepsilon_1\left(\boldsymbol{A}\right) \quad\sum_{\nu_1\equiv A_1\pmod{2p}}\nu_1 e^{\frac{3\pi i\nu_1^2w_1}{2p^2}}\sum_{\nu_2\equiv A_2\pmod{2p}} \nu_2 e^{\frac{\pi i\nu_2^2w_2}{2p^2}}
\\&=\frac{1}{p^2}\sum_{\boldsymbol{A}\in\mathcal A}\varepsilon_1\left(\boldsymbol{A}\right)\Theta_1\left(2p,A_1,2p;\frac{3w_1}{p}\right)\Theta_1\left(2p,A_2,2p;\frac{w_2}{p}\right)
\end{align*}
with
\begin{align*}
\mathcal A&:=\!\left\{\left(0,2\right),\left(p,p+2\right),\left(p-1,p-1\right),\left(-1,-1\right),\left(p+1,p-1\right),\left(1,-1\right)\right\}\!, \varepsilon_1(\boldsymbol{A}):= \varepsilon\left(\frac{A_1-A_2}{2p},
\frac{A_2}{p}\right).
\end{align*}

For $\theta_2$, we proceed similarly. Set $\nu_1=3n_1+2n_2$, $\nu_2=n_1$. Then $\nu_1\in3{\alpha_1}+2{\alpha_2}+\Z$, $\nu_2\in{\alpha_1}+\Z$, and $\nu_1-3\nu_2\in2{\alpha_2}+2\Z$ and we obtain 
\begin{align*}
\theta_2(\boldsymbol{\alpha};\boldsymbol{w})&=\!\!\!\sum_{\substack{\boldsymbol{\nu}\in(3{\alpha_1}+2{\alpha_2},{\alpha_1})+\Z^2 \\ \nu_1-3\nu_2\in 2{\alpha_2}+2\Z}}\!\!\! \nu_1 \nu_2 e^{\frac{\pi i\nu_1^2w_1}{2}+\frac{3\pi i\nu_2^2 w_2}{2}}\\
&=\sum_{\varrho\in\{0,1\}}\sum_{\nu_1\in3{\alpha_1}+2{\alpha_2}+\varrho+2\Z}\nu_1 e^{\frac{\pi i\nu_1^2w_1}{2}}\!\!\!\sum_{\nu_2\in{\alpha_1}+\varrho+2\Z}\!\!\!\nu_2 e^{\frac{3\pi i\nu_2^2w_2}{2}}.
\end{align*}
Summing gives
\begin{align*}
\sum_{\boldsymbol{\alpha}\in\mathscr S^*}
\varepsilon\left(\boldsymbol{\alpha}\right)
\theta_2(\boldsymbol{\alpha};\boldsymbol{w})&=\frac{1}{p^2}\sum_{\boldsymbol{B}\in \mathcal{B}}
\varepsilon_2\left(\boldsymbol{B}\right)
\sum_{\nu_1\equiv B_1\pmod{2p}} \nu_1 e^{\frac{\pi i\nu_1^2w_1}{2p^2}}\!\!\!\!\!\!\!\sum_{\nu_2\equiv B_2\pmod{2p}}\!\!\!\!\!\nu_2e^{\frac{3\pi i\nu_2^2w_2}{2p^2}}
\\
&=\frac{1}{p^2}\sum_{\boldsymbol{B}\in \mathcal B}\varepsilon_2\left(\boldsymbol{B}\right)
\Theta_1\left(2p,B_1,2p;\frac{w_1}{p}\right)\Theta_1\left(2p,B_2,2p;\frac{3w_2}{p}\right)
\end{align*}
with
\begin{align*}
\mathcal{B}&:=\!\left\{(p+1,p-1),(1,-1),(p+2,p),(2,0),(1,1),(p+1,p+1)\right\},\
\varepsilon_2(\boldsymbol{B}):=\varepsilon\left( \frac{B_2-3B_1}{2p},\frac{B_1}{p}\right).
\end{align*}
Combining the above yields that
\begin{align*}
\mathcal E_1\left(\tau\right)&=-\frac{\sqrt{3}}{4p}\sum_{\boldsymbol{A}\in \mathcal{A}} \varepsilon_1(\boldsymbol{A})\int_{-\overline{\tau}}^{i\infty}\int_{w_1}^{i\infty}\frac{\Theta_1(2p,A_1,2p;3w_1)\Theta_1\left(2p,A_2,2p;w_2\right)}{{\sqrt{-i(w_1+\tau)}\sqrt{-i(w_2+\tau)}}}dw_2dw_1\\
&\quad-\frac{\sqrt{3}}{4p}\sum_{\boldsymbol{B}\in \mathcal{B}} \varepsilon_2\left(\boldsymbol{B}\right)\int_{-\overline{\tau}}^{i\infty}\int_{w_1}^{i\infty}\frac{\Theta_1\left(2p,B_1,2p;w_1\right)\Theta_1\left(2p,B_2,2p;3w_2\right)}{{\sqrt{-i(w_1+\tau)}\sqrt{-i(w_2+\tau)}}}dw_2dw_1.
\end{align*}

For $M\in\Gamma_p$, we have, using \eqref{shimura} and \eqref{Shimura1}, 
$$
\Theta_1\left(2p,A,2p;\ell M\tau\right)=\pm\left(\frac{\ell pc}{d}\right)\varepsilon^{-1}_d(c\tau+d)^\frac32\Theta_1(2p,A,2p;\ell\tau).
$$
Theorem \ref{quantheorem} then finishes the claim using that $\varepsilon_d^2 = (\frac{-1}{d})$.

\end{proof}

\subsection{Special multiple Eichler integrals of weight two}

Define for $\boldsymbol{\alpha}\in\mathscr S^\ast$
\begin{align*}
\mathcal E_{2,\boldsymbol{\alpha}}(\tau):=&\frac{\sqrt{3}}{8\pi} \int_{-\overline{\tau}}^{i\infty} \int_{w_1}^{i\infty} \frac{2\theta_3(\boldsymbol{\alpha}; \boldsymbol{w}) -\theta_4(\boldsymbol{\alpha}; \boldsymbol{w})}{\sqrt{-i(w_1+\tau)}(-i(w_2+\tau))^{\frac32}}dw_2dw_1 \\
&+\frac{\sqrt{3}}{8\pi} \int_{-\overline{\tau}}^{i\infty} \int_{w_1}^{i\infty} \frac{\theta_5(\boldsymbol{\alpha};\boldsymbol{w})}{(-i(w_1+\tau))^{\frac32}\sqrt{-i(w_2+\tau)}} dw_2 dw_1
\end{align*}
with
\begin{align*}
\theta_3(\boldsymbol{\alpha}; \boldsymbol{w})&:= \sum_{n\in\alpha+\Z^2}(2n_1+n_2) e^{\frac{3\pi i}{2}(2n_1+n_2)^2w_1+\frac{\pi i n_2^2 w_2}{2}},\\
\theta_4(\boldsymbol{\alpha}; \boldsymbol{w})&:= \sum_{n\in\alpha+\Z^2}(3n_1+2n_2) e^{\frac{\pi i}{2}(3n_1+2n_2)^2w_1+\frac{3\pi i n_1^2 w_2}{2}},\\
\theta_5(\boldsymbol{\alpha}; \boldsymbol{w})&:= \sum_{n\in\alpha+\Z^2}n_1 e^{\frac{\pi i}{2}(3n_1+2n_2)^2w_1+\frac{3\pi i n_1^2 w_2}{2}}.
\end{align*}
We then set
$$
\mathcal E_2(\tau) := \sum_{\boldsymbol{\alpha}\in\mathscr S^\ast}\mathcal E_{2,\boldsymbol{\alpha}}(p\tau).
$$
\begin{remark}
	Similarly as for $\mathcal E_1$, one can simplify $\mathcal E_2$ as
	\begin{equation*}
	\mathcal E_2(\tau) = -\frac{\sqrt{3}}{8\pi} \sum_{\boldsymbol{B}\in\mathcal B} I_{\Theta_1(2p,B_1,2p;\,\cdot\,),\Theta_0(6p,3B_2,6p;\,\cdot\,)}(\tau).	
	\end{equation*}
\end{remark}
This function again transforms as a depth two quantum modular.
\begin{proposition}\label{Derivativetrans}
	We have, for $M\in\Gamma_p$,
	$$
	\mathcal E_2(\tau) - \left(\frac{3}{d}\right) (c\tau+d)^{-2} \mathcal E_2(M\tau) = \sum_{j=1}^{18} \left(r_{f_j,g_j,\frac{d}{c}}(\tau)+I_{f_j}(\tau)r_{g_j,\frac{d}{c}}(\tau)\right),
	$$
	 where $f_j$ and $g_j$ are holomorphic modular forms of weight $\frac12$ or cusp forms of weight $\frac32$.
\end{proposition}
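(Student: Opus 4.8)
The plan is to treat $\mathcal E_2$ as the weight-two analogue of $\mathcal E_1$ and follow the template of Proposition \ref{quantE}: first rewrite the building blocks $\theta_3,\theta_4,\theta_5$ in terms of Shimura's theta functions \eqref{shimura}, thereby expressing $\mathcal E_2$ as a finite linear combination of double Eichler integrals $I_{f_j,g_j}$, and then feed this into the modular transformation law \eqref{Shimura1} together with Theorem \ref{quantheorem}.

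\textbf{Reduction to Shimura theta functions.} Exactly as for $\theta_1$, I would substitute $\nu_1:=2n_1+n_2$, $\nu_2:=n_2$ in $\theta_3$ and $\nu_1:=3n_1+2n_2$, $\nu_2:=n_1$ in $\theta_4$ and $\theta_5$, and split each resulting lattice sum into the parity classes $\varrho\in\{0,1\}$ so that the $w_1$- and $w_2$-sums decouple. The structural difference from $\mathcal E_1$ is that $\theta_3,\theta_4,\theta_5$ each carry only a \emph{single} linear factor: in $\theta_3$ and $\theta_4$ the factor $\nu_1$ lives in the $w_1$-variable, producing a weight-$\tfrac32$ factor $\Theta_1$ there and a weight-$\tfrac12$ factor $\Theta_0$ in $w_2$, whereas in $\theta_5$ the factor $\nu_2=n_1$ lives in the $w_2$-variable, producing $\Theta_0$ in $w_1$ and $\Theta_1$ in $w_2$. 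Summing $2\theta_3-\theta_4$ and $\theta_5$ over $\boldsymbol\alpha\in\mathscr S^\ast$ (now as a plain sum, cf.\ the definition of $\mathcal E_2$, which carries no weights $\varepsilon(\boldsymbol\alpha)$) and letting the residues run modulo $2p$ respectively $6p$, these collapse into finite sums of products $\Theta_1\cdot\Theta_0$ and $\Theta_0\cdot\Theta_1$ over residue sets analogous to $\mathcal A$ and $\mathcal B$. This yields a representation $\mathcal E_2=\sum_{j=1}^{18}I_{f_j,g_j}$ (with the count $18=6+6+6$ coming from $\theta_3$, $\theta_4$, $\theta_5$), in which each $f_j,g_j$ is either a weight-$\tfrac32$ cusp form or a weight-$\tfrac12$ holomorphic modular form, matching the denominators $\sqrt{-i(w_1+\tau)}(-i(w_2+\tau))^{\frac32}$ and $(-i(w_1+\tau))^{\frac32}\sqrt{-i(w_2+\tau)}$, so that $k_1+k_2=2$ and the resulting quantum weight is $4-k_1-k_2=2$.

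\textbf{Transformation and conclusion.} With this representation in hand I would apply \eqref{Shimura1} to each factor: for $M\in\Gamma_p$ both the $\Theta_1$- and the $\Theta_0$-factor transform with an $\varepsilon_d^{-1}$ and a quadratic character, and one checks that combining the two Jacobi symbols with $\varepsilon_d^{2}=\left(\frac{-1}{d}\right)$ produces precisely the automorphy factor $(c\tau+d)^{-2}$ and the multiplier $\left(\frac{3}{d}\right)$ recorded in the statement. Applying Theorem \ref{quantheorem} termwise then gives, for each $j$, the error $r_{f_j,g_j,\frac dc}+I_{f_j}\,r_{g_j,\frac dc}$, and summing over $j$ yields the asserted identity. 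It is essential here to invoke the extended form of Theorem \ref{quantheorem}: the weight-$\tfrac12$ factors are the theta functions $\Theta_0$, which are holomorphic modular forms but \emph{not} cusp forms, so one must use the weaker bound $f_j\!\left(iw_j+\tfrac dc\right)\ll 1+w_j^{-\frac12}$ rather than the cuspidal decay \eqref{boundbig} when checking real-analyticity of the error term.

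\textbf{Main obstacle.} The principal difficulty is the bookkeeping forced by the asymmetry of $\theta_5$: unlike $\mathcal E_1$, where both variables carried weight $\tfrac32$, here $\mathcal E_2$ genuinely requires two integral blocks with the weight assignments $(k_1,k_2)=(\tfrac32,\tfrac12)$ and $(\tfrac12,\tfrac32)$ interchanged, and one must verify that after the substitutions and the sum over $\mathscr S^\ast$ the residue classes organize into clean finite index sets. The subtler point is tracking the characters through the argument rescalings so that the contributions of the mixed pair $\Theta_1,\Theta_0$ combine to exactly $\left(\frac{3}{d}\right)$ and not a neighboring sign; this is where most of the computation lies, but it is routine once the decomposition into Shimura theta functions has been set up.
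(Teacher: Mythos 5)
Your proposal is correct and follows essentially the same route as the paper: the paper's proof likewise rewrites the sums of $\theta_3,\theta_4,\theta_5$ over $\mathscr S^\ast$ as products $\Theta_1\cdot\Theta_0$ (respectively $\Theta_0\cdot\Theta_1$) of Shimura theta functions indexed by the same residue sets $\mathcal A$ and $\mathcal B$ from Proposition \ref{quantE}, and then invokes \eqref{shimura}, \eqref{Shimura1}, and Theorem \ref{quantheorem}. Your attention to the non-cuspidal weight-$\frac12$ factors (requiring the extended form of Theorem \ref{quantheorem} with the bound $f_j(iw_j+\tfrac dc)\ll 1+w_j^{-\frac12}$) and the $18=6+6+6$ count match exactly what the paper's terse proof leaves implicit.
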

\begin{proof}
	As in the proof of Proposition \ref{quantE}, we obtain
	\begin{align*}
	\sum_{\substack{\boldsymbol{\alpha}\in\mathscr S^\ast \\ \boldsymbol{n}\in\boldsymbol{\alpha}+\Z^2}}(2n_1+n_2)e^{\frac{3\pi i}{2}(2n_1+n_2)^2w_1+\frac{\pi in_2^2w_2}{2}}&=\frac{1}{p}\sum_{\boldsymbol{A}\in\mathcal A}\Theta_1\left(2p,A_1,2p;\frac{3w_1}{p}\right)\Theta_0\left(2p,A_2,2p;\frac{w_2}{p}\right),\\	
	\sum_{\substack{\boldsymbol{\alpha}\in\mathscr S^\ast \\ \boldsymbol{n}\in\boldsymbol{\alpha}+\Z^2}} (3n_1+n_2)e^{\frac{\pi i}{2}(3n_1+2n_2)^2w_1+\frac{3\pi in_1^2w_2}{2}}&=\frac1p \sum_{\boldsymbol{B}\in\mathcal B}\Theta_1\left(2p,B_1,2p;\frac{w_1}{p}\right)\Theta_0\left(2p,B_2,2p;\frac{3w_2}{p}\right),\\
	\sum_{\substack{\boldsymbol{\alpha}\in\mathscr S^\ast \\ \boldsymbol{n}\in\boldsymbol{\alpha}+\Z^2}}n_1e^{\frac{\pi i}{2}(3n_1+2n_2)^2w_1+\frac{3\pi i}{2}n_1^2w_2}&=\frac1p \sum_{\boldsymbol{B}\in\mathcal B}\Theta_0\left(2p,B_1,2p;\frac{w_1}{p}\right)\Theta_1\left(2p,B_2,2p;\frac{3w_2}{p}\right).
	\end{align*}
	The claim now again follows from Theorem \ref{quantheorem} using \eqref{shimura} and \eqref{Shimura1}.
\end{proof}

\subsection{More on double Eichler integrals}

We have an obvious map $S_{k}(\Gamma,\chi) \to \mathcal Q_{2-k}(\Gamma^\ast,\chi^\ast)$, where $\chi^*(M):=\chi(M^*)$, which assigns to $f \in S_{k}(\Gamma,\chi)$ its Eichler integral $I_f$, defined in \eqref{Eichler}.
Clearly, we also have a map from $S_{k}(\Gamma,\chi) \otimes S_{k}(\Gamma,\chi)$, actually from its symmetric square, to $(\mathcal Q_{2-k}(\Gamma^\ast,\chi^\ast))^2$, by mapping $f_1 \otimes f_2$ to $I_{f_1} I_{f_2}$.  The double Eichler integral construction $I_{f_1,f_2}$ gives rise to a map
$$ \Lambda^2\left(S_{k}(\Gamma,\chi)\right) \longrightarrow \mathcal Q_{4-2k}^2 \left(\Gamma^\ast,{\chi^\ast}^2\right)\Big\slash \left(\mathcal Q_{2-k}\left(\Gamma^\ast,\chi^\ast\right)\right)^2,$$
where  $\Lambda^2(S_{2-k}(\Gamma,\chi))$ is the second exterior power of $S_{2-k}(\Gamma,\chi)$.
To see this, it suffices to observe the simplest {\em shuffle} relation for iterated integrals
\begin{equation*} \label{shuffle}
I_{f_1,f_2}+I_{f_2,f_1}=I_{f_1} I_{f_2}.
\end{equation*}

\begin{remark}
It is now straightforward to consider even more general iterated Eichler integrals ($r \in \N$):
$$I_{f_1,...,f_r} := \int_{-\overline{\tau}}^{i \infty} \int^{i \infty}_{w_{r-1}}   \cdots \int^{i \infty}_{w_{2}} \prod_{j=1}^r \frac{f_j(w_j)}{(-i(w_j+\tau))^{2-k_j}} dw_1 \cdots dw_r,$$
where the $f_j$ are cusp forms of weight $k_j\geq\frac12$ (or possibly holomorphic forms for weight $\frac12$). We do not pursue their (mock/quantum) modular properties here -- we will address this in our future work \cite{BKM} (see also Section 9 for related comments).
\end{remark}

\section{Indefinite theta functions}

We next realize the double Eichler integrals studied in Section 5 as pieces of indefinite theta functions.

\subsection{The function $\mathcal E_1$ as an indefinite theta function}

The next lemma rewrites $\mathbb E_1(\tau):=\mathcal E_1(\frac{\tau}{p})$ in a shape to which one can apply the Euler-Maclaurin summation formula.
\begin{lemma}\label{thm:thetasum}
	We have
	\begin{equation*}\label{thetasum}
	\mathbb E_1(\tau)=\frac12\sum_{\boldsymbol{\alpha}\in\mathscr S^*}\varepsilon(\boldsymbol{\alpha})\sum_{\boldsymbol{n}\in\boldsymbol{\alpha}+\Z^2}M_2\left(\sqrt 3;\sqrt v\left(2\sqrt 3n_1+\sqrt 3n_2,n_2\right)\right)q^{-Q(\boldsymbol{n})}.
	\end{equation*}
\end{lemma}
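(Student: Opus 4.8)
The plan is to establish the identity separately for each $\boldsymbol\alpha\in\mathscr S^*$. By \eqref{defineE1} we have $\mathbb E_1(\tau)=\mathcal E_1(\tau/p)=\sum_{\boldsymbol\alpha\in\mathscr S^*}\varepsilon(\boldsymbol\alpha)\mathcal E_{1,\boldsymbol\alpha}(\tau)$, and the right-hand side is organized the same way, so it suffices to prove
$$\mathcal E_{1,\boldsymbol\alpha}(\tau)=\tfrac12\sum_{\boldsymbol n\in\boldsymbol\alpha+\Z^2}M_2\!\left(\sqrt3;\sqrt v\,(2\sqrt3 n_1+\sqrt3 n_2,\,n_2)\right)q^{-Q(\boldsymbol n)}=:R_{\boldsymbol\alpha}(\tau).$$
Here absolute convergence permits interchanging the lattice sum with the analytic operations below, and the finitely many $\boldsymbol n$ with $n_1=0$ or $n_2=0$ are covered by the extension of $M_2$ via \eqref{M2E2}.

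Since both sides are real-analytic on $\H$, I would match their anti-holomorphic derivatives and then remove the holomorphic ambiguity. On the right, set $\boldsymbol u=\sqrt v\,(U_1,U_2)$ with $U_1=\sqrt3(2n_1+n_2)$ and $U_2=n_2$; as $M_2(\sqrt3;\boldsymbol u)$ depends on $\tau$ only through $v$, the chain rule gives $\partial_{\bar\tau}M_2=\frac{i}{4\sqrt v}\big(U_1 M_2^{(1,0)}+U_2 M_2^{(0,1)}\big)$. Feeding in \eqref{diffM2} and \eqref{middM1} with $\kappa=\sqrt3$ (so $1+\kappa^2=4$) and using the identities $u_1-\sqrt3 u_2=2\sqrt{3v}\,n_1$ and $u_2+\sqrt3 u_1=2\sqrt v\,(3n_1+2n_2)$, this collapses to the sum of a term in $(2n_1+n_2)e^{-3\pi v(2n_1+n_2)^2}M(\sqrt v\,n_2)$ and a term in $(3n_1+2n_2)e^{-\pi v(3n_1+2n_2)^2}M(\sqrt{3v}\,n_1)$.

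On the left, the integrand of $\mathcal E_{1,\boldsymbol\alpha}$ depends on $\tau$ holomorphically, so $\partial_{\bar\tau}$ acts by Leibniz only on the outer lower limit $-\bar\tau$; this collapses the double integral to a single Eichler integral, in which the outer weight factor becomes $\sqrt{-i(w_1+\tau)}\big|_{w_1=-\bar\tau}=\sqrt{2v}$. The surviving one-dimensional integral $\int_{-\bar\tau}^{i\infty}n\,e^{\pi i a n^2 w}(-i(w+\tau))^{-\frac12}dw$ evaluates, via $w=-\bar\tau+it$ and \eqref{MG}, to $-i\,a^{-\frac12}M(n\sqrt{2av})\,e^{-\pi i a n^2 u}e^{\pi a n^2 v}$. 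Applying this with $a=\tfrac12$ to the $\theta_1$-sector (surviving integral exponential in $n_2$) and with $a=\tfrac32$ to the $\theta_2$-sector (exponential in $n_1$), and collecting the phases through $3(2n_1+n_2)^2+n_2^2=(3n_1+2n_2)^2+3n_1^2=4Q(\boldsymbol n)$, reproduces exactly the two pieces found on the right. Hence $\partial_{\bar\tau}\mathcal E_{1,\boldsymbol\alpha}=\partial_{\bar\tau}R_{\boldsymbol\alpha}$, so $D:=\mathcal E_{1,\boldsymbol\alpha}-R_{\boldsymbol\alpha}$ is holomorphic.

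The hard part is to conclude $D\equiv0$. Both $\mathcal E_{1,\boldsymbol\alpha}$ and $R_{\boldsymbol\alpha}$ tend to $0$ as $v\to\infty$ (for the former the lower limit $-\bar\tau\to i\infty$; for the latter \eqref{limitsM2} together with $U_1^2+U_2^2=4Q(\boldsymbol n)$ makes each summand $O(e^{-2\pi vQ})$), but decay at the single cusp $i\infty$ does not by itself force a holomorphic function to vanish. The clean resolution is to also match the holomorphic derivatives $\partial_\tau$, whence $D$ is constant and the limit gives $D\equiv0$; however, $\partial_\tau\mathcal E_{1,\boldsymbol\alpha}$ differentiates the two weight factors and, after integration by parts, must reproduce the extra term $-2\pi i Q(\boldsymbol n)\,q^{-Q(\boldsymbol n)}M_2$ coming from $\partial_\tau(q^{-Q})$ in $\partial_\tau R_{\boldsymbol\alpha}$. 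This reconstruction of the undifferentiated $M_2$ is essentially the full content of the lemma and is where the real work lies; equivalently, one can carry out the double integral directly and recognize the contour-integral representation of $M_2$ from its definition. The only remaining labor is clerical bookkeeping of the constants $-\tfrac{\sqrt3}{4}$, the $a^{-1/2}$ factors, and the overall $\tfrac12$.
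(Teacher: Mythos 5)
Your $\partial_{\overline{\tau}}$ computation is essentially correct (and it uses exactly the ingredients \eqref{diffM2}, \eqref{middM1}, \eqref{MG} that the paper uses), but the proof has a genuine gap that you yourself identify: after matching antiholomorphic derivatives you only know that $D=\mathcal E_{1,\boldsymbol{\alpha}}-R_{\boldsymbol{\alpha}}$ is holomorphic, and neither decay as $v\to\infty$ (which $q^{N}$ also has) nor matching $\partial_\tau$ can close the argument — the latter is circular, since $\partial_\tau$ of the right-hand side reintroduces the undifferentiated term $-2\pi i\,Q(\boldsymbol{n})\,M_2\,q^{-Q(\boldsymbol{n})}$, and identifying that with the corresponding piece of $\partial_\tau\mathcal E_{1,\boldsymbol{\alpha}}$ is precisely the statement being proved. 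So what remains is not ``clerical bookkeeping''; it is the entire content of the lemma.

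The paper's proof avoids this trap by never working with $\partial/\partial\overline{\tau}$ on $\mathbb H$ at all. It proves the identity \emph{term by term in $\boldsymbol{n}$}: for fixed $\boldsymbol{n}$, the quantity $M_2\bigl(\sqrt3;w_1u_1,w_1u_2\bigr)$ is a function of the single real scaling variable $w_1$, it tends to $0$ as $w_1\to\infty$ by \eqref{limitsM2}, so the fundamental theorem of calculus gives
\begin{equation*}
M_2\left(\sqrt3;u_1,u_2\right)=-\int_1^\infty \frac{\partial}{\partial w_1}M_2\left(\sqrt3;w_1u_1,w_1u_2\right)dw_1 ,
\end{equation*}
with no integration-constant (holomorphic) ambiguity whatsoever. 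Feeding \eqref{diffM2} and \eqref{middM1} into this, changing variables so the ray $[1,\infty)$ becomes the vertical path from $-\overline{\tau}$ to $i\infty$, and then replacing each one-variable $M$-function by its integral representation
\begin{equation*}
M\left(\sqrt{\tfrac{-i(w_1+\tau)}{2}}\,N\right)=\frac{iN}{\sqrt2}\,q^{\frac{N^2}{4}}\int_{w_1}^{i\infty}e^{\frac{\pi iN^2w_2}{2}}\frac{dw_2}{\sqrt{-i(w_2+\tau)}}
\end{equation*}
produces the iterated integral \eqref{M2id} directly; summing over $\boldsymbol{n}$ and $\boldsymbol{\alpha}$ then gives the lemma. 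In other words, the reconstruction step you flag as ``where the real work lies'' is done in the paper by a one-real-variable FTC argument applied to each lattice point, which is the idea your proposal is missing. As written, your argument does not prove the statement.
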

\begin{proof}The claim follows, once we prove that
\begin{align}\notag
	&M_2\left(\sqrt{3};\sqrt{3v}(2n_1+n_2),\sqrt{v}n_2\right)\\
	\notag
	&\qquad=-\frac{\sqrt{3}}{2}(2n_1+n_2)n_2q^{Q(\boldsymbol{n})}\!\! \int_{-\overline{\tau}}^{i \infty}
	\frac{e^{\frac{3\pi i}{2}(2n_1+n_2)^2w_1}}{\sqrt{-i(w_1+\tau)}}\int_{w_1}^{i \infty}\!\!\!
	\frac{e^{\frac{\pi i n_2^2 w_2}{2}}}{\sqrt{-i(w_2+\tau)}}dw_2 dw_1
	\\
	&\qquad\quad
	-\frac{\sqrt{3}}{2}(3n_1+2n_2)n_1q^{Q(\boldsymbol{n})} \int_{-\overline{\tau}}^{i \infty}
	\frac{e^{\frac{\pi i}{2}(3n_1+2n_2)^2w_1}}{\sqrt{-i(w_1+\tau)}}
	\int_{w_1}^{i \infty}
	\frac{e^{\frac{3\pi i n_1^2 w_2}{2}}}{\sqrt{-i(w_2+\tau)}}dw_2 dw_1.	\label{M2id}
\end{align}
For simplicity we only show \eqref{M2id} for $n_1\neq 0$. Since, by \eqref{limitsM2},
$$
\lim_{\lambda\to\infty}M_2(\kappa; \lambda u_1, \lambda u_2)=0,
$$
we obtain, using \eqref{diffM2} and \eqref{middM1},
\begin{align}
\nonumber&\quad M_2(\kappa; u_1,u_2)=
-\int_{1}^{\infty} \frac{\partial}{\partial w_1}
M_2(\kappa; w_1u_1,w_1u_2) dw_1
\\
\nonumber&=
-\int_{1}^{\infty} \left(u_1
M_2^{(1,0)}(\kappa; w_1u_1,w_1u_2)
+u_2
M_2^{(0,1)}(\kappa; w_1u_1,w_1u_2) \right)dw_1\\
\nonumber&=
-2\int_{1}^{\infty}
\left(u_1 e^{-\pi u_1^2 w_1^2} M(u_2w_1)  + \frac{u_2+\kappa u_1}{\sqrt{1+\kappa^2}} e^{-\frac{\pi(u_2+\kappa u_1)^2w_1^2}{1+\kappa^2}}
M\left(w_1\frac{u_1-\kappa u_2}{\sqrt{1+\kappa^2}}\right)\right) dw_1
\nonumber\\&=
-\int_{1}^{\infty}\left(
u_1 e^{-\pi u_1^2w_1} M\left( u_2\sqrt{w_1}\right)
+ \frac{u_2+\kappa u_1}{\sqrt{1+\kappa^2}}
e^{-\frac{\pi(u_2+\kappa u_1)^2w_1}{1+\kappa^2}}
M\left(\sqrt{w_1}\frac{u_1-\kappa u_2}{\sqrt{1+\kappa^2}}\right)\right) \frac{dw_1}{\sqrt{w_1}}
\label{6.2}
\\&=\frac{i}{\sqrt{2}}
\int_{-\overline{\tau}}^{i \infty}\Bigg(
\frac{u_1}{\sqrt{v}} e^{\frac{\pi i u_1^2 w_1}{2v} } q^{\frac{u_1^2}{4v}}
M\left(\sqrt{\frac{-i(w_1+\tau)}{2v}}u_2\right)  \label{rewriteM2}
\nonumber\\ &\nonumber \qquad\quad + \frac{u_2+\kappa u_1}{\sqrt{(1+\kappa^2)v}}
e^{\frac{\pi i(u_2+\kappa u_1)^2w_1}{2\left(1+\kappa^2\right)v} } q^{\frac{(u_2+\kappa u_1)^2}{4\left(1+\kappa^2\right)v}}
M\left(\sqrt{\frac{-i(w_1+\tau)}{2}} \frac{u_1-\kappa u_2}{\sqrt{(1+\kappa^2)v}}\right)\Bigg) \frac{dw_1}{\sqrt{-i(w_1+\tau)}}.
\end{align}
Now write for $N\in\R^+$
\begin{align*}
\quad M\left(\sqrt{\frac{-i(w_1+\tau)}{2}}N\right)= \frac{iN}{\sqrt{2}}q^{\frac{N^2 }{4}}
\int_{w_1 }^{i\infty} e^{\frac{\pi i N^2w_2}{2}}  \frac{dw_2}{\sqrt{-i(w_2+\tau)}}.
\end{align*}
Plugging this into \eqref{6.2} easily yields that
\begin{align*}
&M_2(\kappa;u_1,u_2)=
-\frac{u_1}{2\sqrt{v}}
\frac{ u_2}{\sqrt{v}}
q^{\frac{u_1^2}{4v}+\frac{u_2^2}{4v}}
\int_{-\overline{\tau}}^{i \infty}
\frac{e^{\frac{\pi i  u_1^2 w_1}{2v}} }{\sqrt{-i(w_1+\tau)}}
\int_{w_1}^{i\infty}
\frac{e^{\frac{\pi i u_2^2 w_2}{2v}}}{\sqrt{-i(w_2+\tau)}}dw_2
dw_1
\\ &\quad -
\frac{u_2+\kappa u_1}{2\sqrt{(1+\kappa^2)v}}
\frac{u_1-\kappa u_2}{\sqrt{(1+\kappa^2)v}}
q^{\frac{(u_2+\kappa u_1)^2}{4\left(1+\kappa^2\right)v}
	+\frac{(u_1 - \kappa u_2)^2}{4\left(1+\kappa^2\right)v}}
\int_{-\overline{\tau}}^{i \infty}
\frac{e^{\frac{\pi i(u_2+\kappa u_1)^2w_1}{2\left(1+\kappa^2\right)v}} }{\sqrt{-i(w_1+\tau)}}
\int_{w_1}^{i\infty}
\frac{e^{\frac{\pi i(u_1-\kappa u_2)^2w_2}{2\left(1+\kappa^2\right)v} }}{\sqrt{-i(w_2+\tau)}}
dw_2
dw_1.
\end{align*}
From this it is not hard to conclude \eqref{M2id}.
\end{proof}

\subsection{The function $\mathcal E_2$ as an indefinite theta function}

We next write $\mathbb E_2(\tau):=\mathcal E_2(\frac{\tau}{p})$ as a piece of a derivative of an indefinite theta function, having an extra Jacobi variable.

\begin{lemma}\label{Eichler2}
	We have
	\begin{align*}
	\mathbb E_{2}(\tau) &=\frac{1}{4\pi i} \sum_{\boldsymbol{\alpha}\in\mathscr S^\ast}\\
	&\qquad\times\sum_{\boldsymbol{n}\in\boldsymbol{\alpha}+\Z^2}
	\left[\frac{\partial}{\partial z}\left(M_2\left(\sqrt{3};\sqrt{3v}(2n_1+n_2),\sqrt{v}\left(n_2-\frac{2\im(z)}{v}\right)\right)e^{2\pi in_2z} \right)\right]_{z=0}q^{-Q(\boldsymbol{n})}.
	\end{align*}
\end{lemma}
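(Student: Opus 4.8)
The plan is to prove the identity along the lines of Lemma~\ref{thm:thetasum}, but carrying an elliptic variable $z$ through the computation and differentiating only at the very end. First I would establish a $z$-deformed version of the pointwise identity \eqref{M2id}, writing
\[
M_2\left(\sqrt3;\sqrt{3v}(2n_1+n_2),\sqrt v\left(n_2-\tfrac{2\im(z)}{v}\right)\right)e^{2\pi i n_2 z}\,q^{-Q(\boldsymbol{n})}
\]
as an explicit combination of double Eichler integrals, identical in shape to those in \eqref{M2id} except that the inner ($w_2$) Gaussians and exponentials now carry the shifted frequency $n_2-\tfrac{2\im(z)}{v}$. Differentiating this identity in $z$ and setting $z=0$ then gives the assertion: by definition the bracketed summand in the statement is exactly $\partial_z$ of the left-hand side at $z=0$, and the factor $\tfrac{1}{4\pi i}$ together with the sum over $\boldsymbol{\alpha}\in\mathscr S^\ast$ and $\boldsymbol{n}$ reassembles $\mathbb E_2$.

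To obtain the deformed identity I would repeat the argument in the proof of Lemma~\ref{thm:thetasum} verbatim, with $u_2$ replaced by $u_2(z):=\sqrt v(n_2-\tfrac{2\im(z)}{v})$ and an overall factor $e^{2\pi i n_2 z}$. That is, I start from $M_2(\kappa;\boldsymbol{u})=-\int_1^\infty\frac{\partial}{\partial w_1}M_2(\kappa;w_1\boldsymbol{u})\,dw_1$ (legitimate by \eqref{limitsM2}), insert the derivative formulas \eqref{diffM2} and \eqref{middM1}, and convert each resulting $M$-factor into a weight $\frac32$ inner integral via the representation $M(\sqrt{-i(w+\tau)/2}\,N)=\frac{iN}{\sqrt2}q^{N^2/4}\int_w^{i\infty}e^{\pi i N^2 w'/2}(-i(w'+\tau))^{-\frac12}\,dw'$ used there. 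The single new point is that the frequency $N$ in the $w_2$-integral is now the $z$-dependent quantity $n_2-\tfrac{2\im(z)}{v}$; the prefactor $e^{2\pi i n_2 z}$ is exactly what cancels the spurious real-exponential growth, leaving a genuine non-holomorphic Jacobi-type object in $z$.

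The decisive step is the differentiation at $z=0$, where (using $\partial_z\im(z)=-\tfrac i2$) the derivative distributes over the linear prefactor, the Gaussian, the inner exponential, and $e^{2\pi i n_2 z}$. When $\partial_z$ hits the inner exponential it brings down a factor of $w_2$, and an integration by parts based on $\frac{d}{dw_2}\big[(-i(w_2+\tau))^{-\frac12}e^{\pi i N^2 w_2/2}\big]$ trades this for a raising of the denominator exponent from $\frac12$ to $\frac32$; this is precisely the mechanism that turns the symmetric weight $\left(\frac32,\frac32\right)$ integrals of $\mathbb E_1$ into the mixed-weight integrands $\theta_3,\theta_4,\theta_5$ of $\mathbb E_2$ and produces the single linear factors $(2n_1+n_2)$, $(3n_1+2n_2)$, $n_1$. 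I expect the main obstacle to be the bookkeeping here: the three contributions in which $\partial_z$ hits $e^{2\pi i n_2 z}$, the prefactor, and the Gaussian all retain weight $\left(\frac32,\frac32\right)$ and must cancel against one another (this cancellation is what the $\im(z)$-shift is designed to guarantee), while the integration-by-parts boundary terms must recombine, after summing over $\boldsymbol{n}$ and $\boldsymbol{\alpha}\in\mathscr S^\ast$, into exactly $2\theta_3-\theta_4$ and $\theta_5$ with the constants matching $\mathcal E_{2,\boldsymbol{\alpha}}$'s prefactor $\frac{\sqrt3}{8\pi}$. Justifying differentiation under the integral and the vanishing of the endpoint contributions at $i\infty$ is routine by the decay bounds already used in Section~5.
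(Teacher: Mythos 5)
Your top-level plan---deform the $M_2$-to-double-Eichler dictionary of Lemma \ref{thm:thetasum} by $u_2\mapsto\sqrt{v}\bigl(n_2-\tfrac{2\im(z)}{v}\bigr)$, multiply by $e^{2\pi in_2z}$, and differentiate at $z=0$---is viable in principle, and it is essentially a reshuffling of the paper's proof (the paper differentiates first, using \eqref{diffM2} to obtain \eqref{defineH}, and then proves the identity \eqref{showed} for the resulting function). But two of your concrete claims are wrong, and as described the argument fails. First, your deformed identity is incorrect: substituting $u_2(z)$ into the dictionary shifts the frequencies in \emph{both} slots of the second double integral of \eqref{M2id}, because that integral is built from the combinations $u_2+\sqrt{3}u_1=2\sqrt{v}\bigl(3n_1+2n_2-\tfrac{\im(z)}{v}\bigr)$ and $u_1-\sqrt{3}u_2=2\sqrt{3v}\bigl(n_1+\tfrac{\im(z)}{v}\bigr)$; only the first integral has just its inner ($w_2$) frequency shifted, and the shift amounts $-\tfrac{2\im(z)}{v}$, $-\tfrac{\im(z)}{v}$, $+\tfrac{\im(z)}{v}$ are all different. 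This is not a bookkeeping detail: the $\theta_5$ term of $\mathcal E_{2,\boldsymbol{\alpha}}$ carries the power $(-i(w_1+\tau))^{-3/2}$ on the \emph{outer} variable, and it can only arise from the derivative acting on the $w_1$-slot of the second integral (in the paper this is the $w_1$-integration by parts leading to \eqref{intpart}). With your version---only $w_2$-frequencies deformed and only $w_2$-integration by parts---$\theta_5$ can never be produced, nor can the coefficient pattern in $2\theta_3-\theta_4$ come out correctly.

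Second, your cancellation scheme is backwards. The contribution where $\partial_z$ hits $e^{2\pi in_2z}$ equals $2\pi i n_2M_2(\cdots)$, which by \eqref{M2id} is $2\pi i$ times the $\bigl(\tfrac32,\tfrac32\bigr)$-type double integrals with cubic prefactors $(2n_1+n_2)n_2^2$ and $(3n_1+2n_2)n_1n_2$; it is precisely this term which, after converting $\pi N^2$-weighted $(-i(w+\tau))^{-1/2}$-integrals into $(-i(w+\tau))^{-3/2}$-integrals plus boundary terms (the content of \eqref{G12} and the split \eqref{splitn2} in the paper's proof of \eqref{showed}), yields the mixed-weight integrands with single linear factors $(2n_1+n_2)$, $(3n_1+2n_2)$, $n_1$. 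So this term is the main term and cannot be cancelled away. The prefactor-, Gaussian- and exponential-hits do not cancel among themselves either: by \eqref{diffM2} their sum is the single correction term $\tfrac{i}{\sqrt{v}}e^{-\pi v(3n_1+2n_2)^2}M\bigl(\sqrt{3v}n_1\bigr)$ (the second summand of \eqref{defineH} up to $2\pi i$), and this cancels only against a boundary term generated by the weight-raising conversion just described---exactly the cancellation the paper engineers between \eqref{defineH} and the third term of \eqref{showed}. The deform-then-differentiate route can be carried through, but both mechanisms you rely on---what produces $\theta_3,\theta_4,\theta_5$ and what cancels---are misidentified, so there is a genuine gap.
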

\begin{proof}
We first compute
\begin{equation}\label{defineH}\begin{split}
\frac{1}{2\pi i}&\left[\frac{\partial}{\partial z}\left(M_2\left(\sqrt 3;\sqrt{3v}\left(2n_1+n_2\right),\sqrt{v}\left(n_2-\frac{2\im(z)}{v}\right)\right)e^{2\pi in_2z}\right)\right]_{z=0}\\
&\hspace{3.2cm}=n_2M_2\left(\sqrt 3;\sqrt{3v}\left(2n_1+n_2\right),\sqrt{v}n_2\right)+\frac{1}{2\pi \sqrt{v}}e^{-\pi \left(3n_1+2n_2\right)^2v}M\left(\sqrt{3v}n_1\right).
\end{split}
\end{equation}
We show below that
\begin{align}
\nonumber&n_2M_2\left(\sqrt{3};\sqrt{3v}(2n_1+n_2),\sqrt{v}n_2\right)=-\frac{\sqrt{3}}{2\pi}(2n_1+n_2)\int_{2v}^\infty \frac{e^{-\frac{3\pi}{2}(2n_1+n_2)^2w_1}}{\sqrt{w_1}}\int_{w_1}^\infty \frac{e^{-\frac{\pi n_2^2w_2}{2}}}{w_2^{\frac32}}dw_2dw_1\\
\nonumber&+\frac{\sqrt{3}}{4\pi}(3n_1+2n_2)\int_{2v}^\infty \frac{e^{-\frac{\pi}{2}(3n_1+2n_2)^2w_1}}{\sqrt{w_1}}\int_{w_1}^\infty \frac{e^{-\frac{3\pi n_1^2w_2}{2}}}{w_2^{\frac32}}dw_2dw_1-\frac{1}{2\pi\sqrt{v}}e^{-\pi (3n_1+2n_2)^2v}M\left(\sqrt{3v}n_1\right)\\
&-\frac{\sqrt{3}n_1}{4\pi}\int_{2v}^\infty \frac{e^{-\frac{\pi}{2}(3n_1+2n_2)^2w_1}}{w_1^\frac32}\int_{w_1}^\infty\frac{e^{-\frac{3\pi n_1^2w_2}{2}}}{w_2^\frac12}dw_2dw_1.\label{showed}
\end{align}
Since the third term cancels the second term on the right-hand side of \eqref{defineH} this then implies the claim,
using that
\begin{align*}
\int_{2v}^\infty \frac{e^{-2\pi M^2w_1}}{w_1^\frac12} \int_{w_1}^\infty \frac{e^{-2\pi N^2w_2}}{w_2^\frac32}dw_2dw_1&=-q^{M^2+N^2}\int_{-\overline{\tau}}^{i\infty}\frac{e^{2\pi iM^2w_1}}{\left(-i(w_1+\tau)\right)^\frac12} \int_{w_1}^{i\infty} \frac{e^{2\pi iN^2w_2}}{\left(-i(w_2+\tau)\right)^\frac32}dw_2dw_1,\\
\int_{2v}^\infty \frac{e^{-2\pi M^2w_1}}{w_1^\frac32} \int_{w_1}^\infty \frac{e^{-2\pi N^2w_2}}{w_2^\frac12} dw_2dw_1&=-q^{N^2+M^2}\int_{-\overline{\tau}}^{i\infty}\frac{e^{2\pi iM^2w_1}}{(-i(w_1+\tau))^\frac32} \int_{w_1}^{i\infty} \frac{e^{2\pi iN^2w_2}}{(-i(w_2+\tau))^\frac12}dw_2dw_1.	
\end{align*}
To prove \eqref{showed}, we again, for simplicity, restrict to $n_1\neq0$.

Plugging in \eqref{6.2} yields
\begin{equation}\label{M2int}\begin{split}
M_2\left(\sqrt{3};\sqrt{3v}(2n_1+n_2),\sqrt{v}n_2\right)&=-\int_1^\infty\left(\sqrt{3v}(2n_1+n_2)e^{-3\pi v(2n_1+n_2)^2w_1}M\left(\sqrt{vw_1}n_2\right)\right.\\
&\left.\qquad\qquad+\sqrt{v}(3n_1+2n_2)e^{-\pi v(3n_1+2n_2)^2w_1}M\left(\sqrt{3vw_1}n_1\right)\right)\frac{dw_1}{\sqrt{w_1}}.
\end{split}
\end{equation}
Using \eqref{MG} and \eqref{G12} the first term in \eqref{M2int} multiplied by $n_2$ gives
\begin{multline}\label{t1}
-\frac{\sqrt{3v}}{2\sqrt{\pi}}|n_2|(2n_1+n_2)\int_{1}^{\infty} e^{-3\pi v(2n_1+n_2)^2w_1}\Gamma\left(-\frac12,\pi vn_2^2w_1\right)\frac{dw_1}{\sqrt{w_1}}\\
+\frac{\sqrt{3}}{\pi}(2n_1+n_2)\int_1^\infty e^{-4\pi vQ(n_1,n_2)w_1}\frac{dw_1}{w_1}.
\end{multline}
For the second term in \eqref{M2int}, we split
\begin{equation}\label{splitn2}
n_2=\frac12 (3n_1+2n_2)-\frac32 n_1.
\end{equation}
The $n_1$-term contributes to $n_2M_2$ as
\begin{multline}\label{t2}
\frac34\sqrt{\frac{v}{\pi}}|n_1|(3n_1+2n_2)\int_1^\infty e^{-\pi v(3n_1+2n_2)^2w_1}\Gamma\left(-\frac12,3\pi vn_1^2w_1\right)\frac{dw_1}{\sqrt{w_1}}\\
-\frac{\sqrt{3}}{2\pi}(3n_1+2n_2)\int_1^\infty e^{-4\pi vQ(n_1,n_2)w_1}\frac{dw_1}{w_1}.
\end{multline}
We next use that for $N\in\N_0$, $M\in\N$
\begin{equation*}
\int_1^\infty e^{-4\pi N^2vw_1}\Gamma\left(-\frac12,4\pi vM^2w_1\right)\frac{dw_1}{\sqrt{w_1}}=\frac{1}{2\sqrt{\pi v}|M|}\int_{2v}^\infty \frac{e^{-2\pi N^2w_1}}{\sqrt{w_1}}\int_{w_1}^\infty \frac{e^{-2\pi M^2w_2}}{w_2^\frac32}dw_2dw_1.
\end{equation*}
We use this to rewrite the first terms in \eqref{t1} and \eqref{t2}.
The first term in \eqref{t1} is the first term on the right-hand side of \eqref{showed}.
Similarly, since $n_1\neq 0$, the first term in \eqref{t2} equals the second term in \eqref{showed}.
Now we combine the second terms in \eqref{t1} and \eqref{t2}, to get
\begin{align}
\frac{\sqrt{3} n_1}{2\pi}\int_1^\infty e^{-4\pi vQ(\boldsymbol{n})w_1}\frac{dw_1}{w_1}. \label{term1}
\end{align}
Next we compute the contribution from the first term in \eqref{splitn2},
\begin{multline*}
-\frac{\sqrt{v}}{2}\left(3n_1+2n_2\right)^2 \int_1^\infty e^{-\pi v(3n_1+2n_2)^2w_1}M\left(\sqrt{3vw_1}n_1\right)\frac{dw_1}{\sqrt{w_1}}\\
= \frac{1}{2\pi\sqrt{v}} \int_1^\infty \frac{\partial}{\partial w_1} \left(e^{-\pi v(3n_1+2n_2)^2w_1}\right) \frac{M\left(\sqrt{3vw_1}n_1\right)}{\sqrt{w_1}}dw_1.
\end{multline*}
Using integration by parts, this becomes
\begin{multline}
-\frac{1}{2\pi\sqrt{v}} e^{-\pi v(3n_1+2n_2)^2}M\left(\sqrt{3v}n_1\right) -\frac{\sqrt{3}n_1}{2\pi} \int_1^\infty e^{-4\pi vQ(n_1,n_2)w_1}\frac{dw_1}{w_1} \\
+ \frac{1}{4\pi\sqrt{v}} \int_1^\infty e^{-\pi v(3n_1+2n_2)^2w_1} \frac{M\left(\sqrt{3vw_1}n_1\right)}{w_1^\frac32}dw_1\label{intpart}.
\end{multline}
The second term now cancels \eqref{term1} and the first term equals the third term in \eqref{showed}.

To rewrite the final term in \eqref{intpart}, we use that for $M, N\in\Z$ with $N\neq 0$
\begin{align*}
\int_1^\infty e^{-4\pi vM^2w_1}\frac{M\left(2\sqrt{vw_1}N\right)}{w_1^\frac32}dw_1=-2\sqrt{v}N\int_{2v}^\infty \frac{e^{-2\pi M^2w_1}}{w_1^\frac32}\int_{w_1}^\infty \frac{e^{-2\pi N^2 w_2}}{w_2^\frac12}dw_2dw_1.
\end{align*}
Thus the last term in \eqref{intpart} gives the final term in \eqref{showed}.

\end{proof}

\section{Asymptotic behavior of multiple Eichler integrals and proof of Theorem \ref{maintheorem}}
In this section, we asymptotically relate $F_j$ and $\mathbb E_j$.
\subsection{Asymptotic behavior of $\mathbb E_1$}
Write
\[
F_1\left(e^{2\pi i\frac{h}{k}-t}\right)\sim\sum_{m\geq 0} a_{h, k}(m) t^m\quad \left(t\to 0^+\right).
\]

The goal of this subsection is to prove the following.
\begin{theorem}\label{asagree}
We have, for $h,k\in\Z$ with $k>0$ and $\gcd(h,k)=1,$
\[
\mathbb{E}_1\left(\frac{h}{k}+\frac{it}{2\pi}\right){\sim } \sum_{m\geq 0} a_{-h, k}(m) (-t)^m\quad \left(t\to 0^+\right).
\]
\end{theorem}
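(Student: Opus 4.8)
The plan is to read off the asymptotics of $\mathbb E_1$ directly from the non-holomorphic theta representation of Lemma \ref{thm:thetasum} by Euler--Maclaurin summation, mirroring the analysis of $F_1$ in Section \ref{sec:asympF1}. Setting $\tau=\frac hk+\frac{it}{2\pi}$, so that $v=\frac{t}{2\pi}$, I would first record that the nome satisfies $q^{-Q(\boldsymbol n)}=(q^{-1})^{Q(\boldsymbol n)}$ with $q^{-1}=e^{2\pi i\frac{-h}{k}-(-t)}$; this already foreshadows that the answer is the expansion of $F_1$ at the \emph{reciprocal} nome, i.e. with $h\mapsto-h$ and $t\mapsto-t$, which is exactly $\sum_{m\ge0}a_{-h,k}(m)(-t)^m$. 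Rescaling $\boldsymbol n\mapsto\sqrt t\,\boldsymbol n$ and using the homogeneity of $Q$ together with the Gaussian decay \eqref{limitsM2}, each summand becomes $G(\sqrt t\,\boldsymbol n)\,e^{-2\pi i\frac hk Q(\boldsymbol n)}$ for a fixed rapidly decaying $G$ (the Gaussian in $M_2$ supplies $e^{-4\pi vQ(\boldsymbol n)}$ and dominates the factor $e^{tQ(\boldsymbol n)}$), putting the sum into the shape to which \eqref{EulerMcLaurin} applies after splitting $\boldsymbol n$ into residues modulo $\frac{kp}{\delta}$, $\delta=\gcd(h,p)$.

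Next I would decompose $M_2$ through \eqref{M2E2} into its pure sign part $-\sgn(u_1)\sgn(u_2+\sqrt3\,u_1)$, the mixed pieces $-\sgn(u_2)M(u_1)$ and $-\sgn(u_1-\sqrt3\,u_2)M(\cdots)$, and the genuinely non-holomorphic $E_2$-term. With $u_1=\sqrt{3v}(2n_1+n_2)$ and $u_2=\sqrt v\,n_2$ the sign arguments collapse to the $v$-independent forms $\sgn(2n_1+n_2)$, $\sgn(3n_1+2n_2)$, $\sgn(n_1)$, $\sgn(n_2)$. Hence the sign part turns the lattice sum over $\boldsymbol\alpha+\Z^2$ into cone-restricted sums which, after the $\boldsymbol\alpha$-summation and the pairing $\boldsymbol\alpha\leftrightarrow\boldsymbol1-\boldsymbol\alpha$ used throughout Section \ref{sec:asympF1}, reproduce exactly the quadrant sums defining $F_1$ in \eqref{defineF1} (writing $M=E-\sgn$ in the mixed terms supplies the rank one false theta piece $\frac12\sum_m\sgn(m+\frac1p)q^{(m+\frac1p)^2}$). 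Applying \eqref{EulerMcLaurin}, the same character-sum cancellations established in Section \ref{sec:asympF1} eliminate the main terms and organize the Bernoulli contributions into the series $\sum_{m\ge0}a_{-h,k}(m)(-t)^m$, the substitution $h\mapsto-h$, $t\mapsto-t$ being precisely the passage to $q^{-1}$.

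The crux is twofold. First, the sign part alone corresponds to $q^{-Q(\boldsymbol n)}$ with $|q^{-1}|>1$ and so diverges as a series; convergence is supplied only by the full $M_2$, so I would keep $M_2$ intact and extract the asymptotic series from the convergent sum, the sign-part answer being the legitimate asymptotic expansion (a formal power series in $t$) read off from the Bernoulli terms. Second, and this is the main obstacle, I must show the leftover error-function contributions --- the $E$-parts of $M=E-\sgn$ and the entire $E_2$-term --- enter only to infinite order in $t$. Via \eqref{EG}, \eqref{MG} these are governed by incomplete Gamma functions $\Gamma(\frac12,\pi u^2)$ at the rescaled linear forms, and by the degenerate limits in Lemma \ref{limitsE2}; after the residue decomposition and Euler--Maclaurin expansion, every power-of-$t$ coefficient is again a character sum of the vanishing type from Section \ref{sec:asympF1}, forcing these pieces to be $O(t^N)$ for all $N$. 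This infinite-order vanishing at rationals is the two-dimensional analogue of Zagier's observation underlying \eqref{sumoftails}, and carrying out the two-dimensional Euler--Maclaurin along the sign loci $u_2=0$ and $u_1=\sqrt3\,u_2$, where the derivative formulas \eqref{diffM2} and \eqref{middM1} must be invoked, is where the genuine work lies.
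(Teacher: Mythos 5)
Your skeleton --- start from Lemma \ref{thm:thetasum}, rescale, sort $\boldsymbol{n}$ into residue classes modulo $\frac{kp}{\delta}$, apply \eqref{EulerMcLaurin}, and read the substitution $h\mapsto -h$, $t\mapsto -t$ off the inverted nome $q^{-Q(\boldsymbol{n})}$ --- is exactly the paper's. The gap is your second ``crux'': the claim that the $E$- and $E_2$-contributions enter only to infinite order because every power-of-$t$ coefficient is ``a character sum of the vanishing type from Section 4''. This is false, and the decomposition it rests on is not available. First, every individual piece of \eqref{M2E2} times $q^{-Q(\boldsymbol{n})}$ diverges as a series (you concede this for the sign part, but the mixed $\sgn\cdot M$ terms and the $E_2$ term diverge as well, along the null directions of their Gaussian arguments), and the same problem reappears at the level of Euler--Maclaurin coefficients: after rescaling, the summand is $M_2(\cdots)e^{Q(\boldsymbol{x})}$, so the ``sign part'' of the integrand is $\pm e^{Q(\boldsymbol{x})}$, whose boundary integrals $\int_0^\infty(\cdot)\,dx_1$ along the axes diverge; only combinations such as $e^{Q}\left(1-E(\cdot)\right)=-e^{Q}M(\cdot)$ are integrable there. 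Second, the character sums multiplying the error-function contributions in the axis-integral terms of \eqref{EulerMcLaurin} are the \emph{same} sums that multiply $F_1$'s own coefficients $a_{h,k}(m)$; the only vanishing available is for the leading term (the conjugate of \eqref{sums}) and certain parity classes. If character sums annihilated the error-function pieces to all orders, they would annihilate $a_{h,k}(m)$ for all $m$ as well and the theorem would be vacuous. In the paper's proof the $E_2$-piece does drop out, but by a parity argument pairing the two quadrant sums (the functions $\mathcal F_3$ and $\widetilde{\mathcal F}_3$), not by character sums; and the $\sgn\cdot E$ pieces do \emph{not} drop out of the axis integrals at all --- they combine with the sign part into $-e^{Q}M(\cdot)$ and contribute at every order.

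What is actually needed, and what your outline is missing, is the analytic content of the matching. The paper (i) replaces $\sgn$ by $\sgn^*$, i.e.\ $M_2$ by $M_2^*$, and splits off the boundary correction $H_1$ via \eqref{limit}; it is this one-dimensional correction --- not the mixed terms of \eqref{M2E2}, as you suggest --- that produces the rank-one false theta piece of \eqref{defineF1}; (ii) reduces each Euler--Maclaurin coefficient by parity to an integrable combination; and (iii) proves explicit identities, \eqref{intagru} with \eqref{mainold} and its analogues, plus $\mathcal F_4^{(2m)}(0)=(-1)^{m+1}\mathcal F_2^{(2m)}(0)$ for $H_1$, showing those combinations equal $F_1$'s coefficients times $(-1)^{n_2}$ or $i^{n_1+n_2}$. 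Note that $F_1$'s coefficients are themselves error-function expressions ($\int_0^\infty e^{-Q(x_1,x_2)}dx_1$ is an incomplete Gaussian), and \eqref{intagru} is an identity between two such expressions, established by a change of variables, integration by parts, and the oddness of $E$; this computation is precisely where the factor $(-1)^{n_2}$ --- the $t\mapsto -t$ of the statement --- comes from. So the error functions are the substance of the comparison with $F_1$, not a remainder of infinite order, and without step (iii) the proof does not close.
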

\begin{proof}
We use Lemma \ref{thm:thetasum} and the fact that $M_2$ is an even function, to rewrite
\begin{align*}
\mathbb E_1(\tau)& = \frac12 \sum_{\boldsymbol{\alpha}\in\mathscr S} \varepsilon(\boldsymbol{\alpha}) \sum_{\boldsymbol{n}\in\boldsymbol{\alpha}+\N_0^2} M_2\left(\sqrt{3};\sqrt{v}\left(2\sqrt{3}n_1+\sqrt{3}n_2,n_2\right)\right)q^{-Q(\boldsymbol{n})}\\
&\quad+\frac12 \sum_{\boldsymbol{\alpha}\in\widetilde{\mathscr{S}}} \widetilde{\varepsilon}(\boldsymbol{\alpha})\sum_{\boldsymbol{n}\in\boldsymbol{\alpha}+\N_0^2}M_2\left(\sqrt{3};\sqrt{v}\left(-2\sqrt{3}n_1+\sqrt{3}n_2,n_2\right)\right)q^{-Q(-n_1,n_2)},
\end{align*}
where
\begin{align*}
\widetilde{\mathscr S}:=\left\{\left(1-\alpha_1,\alpha_2\right):\boldsymbol{\alpha}\in \mathscr S\right\},\quad
\widetilde{\varepsilon}(\boldsymbol{\alpha}):=\varepsilon(1-\alpha_1,\alpha_2).
\end{align*}
To apply the Euler-Maclaurin summation formula directly, we turn every sgn into $\sgn^*$, where $\sgn^\ast(x):=\sgn(x)$ for $x\neq0$ and $\sgn^\ast(0):=1$. To be more precise, we set
\begin{align}\notag
&M_2^\ast\left(\sqrt{3};\sqrt{3}(2x_1+x_2),x_2\right):=\sgn^\ast(x_1)\sgn^\ast(x_2)+E_2\left(\sqrt{3};\sqrt{3}(2x_1+x_2),x_2\right)\\
\label{M2star}
&\hspace{6cm}-\sgn^\ast(x_2)E\left(\sqrt{3}(2x_1+x_2)\right)-\sgn^\ast(x_1)E(3x_1+2x_2).
\end{align}
Using that
\begin{equation}\label{limit}
M_2\left(\sqrt{3};\sqrt{3}x_2,x_2\right)-\lim_{x_1\to 0^+} M_2^\ast \left(\sqrt{3};\sqrt{3} \left(\pm 2x_1+x_2\right),x_2\right) = \pm M(2x_2),
\end{equation}
we then split
$$
\mathbb E_1(\tau)=\mathcal E_1^\ast(\tau) +H_1(\tau)
$$
with
\begin{align*}
\mathcal E_1^\ast(\tau)&:=\frac12 \sum_{\boldsymbol{\alpha}\in\mathscr{S}}\varepsilon (\boldsymbol{\alpha})\sum_{\boldsymbol{n}\in\boldsymbol{\alpha}+\N_0^2}M_2^\ast\left(\sqrt{3};\sqrt{v}\left(2\sqrt{3}n_1+\sqrt{3}n_2,n_2\right)\right)q^{-Q(\boldsymbol{n})}\\
&\qquad+\frac12 \sum_{\boldsymbol{\alpha}\in\widetilde{\mathscr{S}}}\widetilde{\varepsilon} (\boldsymbol{\alpha})\sum_{\boldsymbol{n}\in\boldsymbol{\alpha}+\N_0^2}M_2^\ast\left(\sqrt{3};\sqrt{v}\left(-2\sqrt{3}n_1+\sqrt{3}n_2,n_2\right)\right)q^{-Q(-n_1,n_2)},\\
H_1(\tau)&:=-\frac12 \sum_{m\in\frac1p+\N_0}M\left(2\sqrt{v}m\right)q^{-m^2}+\frac12\sum_{m\in1-\frac1p +\N_0}M\left(2\sqrt{v}m\right)q^{-m^2}.
\end{align*}
Note that for $n_1=0$ we take the limit $n_1\to0$ in the $M_2^*$-functions.

We proceed as in Subsection \ref{functionf1} to determine the asymptotic behavior of $\mathcal E_1^\ast$ and $H_1$. Firstly we rewrite
\begin{align*}
\mathcal{E}_1^\ast\left(\frac{h}{k}+\frac{it}{2\pi}\right)&=\sum_{\boldsymbol{\alpha}\in\mathscr S}\varepsilon(\boldsymbol{\alpha})\!\!\!\sum_{0\leq\boldsymbol{\ell}\leq\frac{kp}{\delta}-1}\!\!\!\!\!\! e^{-2\pi i\frac{h}{k}Q(\boldsymbol{\ell}+\boldsymbol{\alpha})}\sum_{n\in\frac{\delta}{kp}(\boldsymbol{\ell}+\boldsymbol{\alpha})+\N_0^2}\mathcal F_3\left(\frac{kp}{\delta}\sqrt{t}\boldsymbol{n}\right)\\
&\quad+\!\!\!\sum_{\boldsymbol{\alpha}\in\widetilde{\mathscr S}}\widetilde{\varepsilon}(\alpha)\!\!\!\sum_{0\leq\boldsymbol{\ell}\leq\frac{kp}{\delta}-1}\!\!\!\!\!\!e^{-2\pi i\frac{h}{k}Q(-(\ell_1+{\alpha_1}),\ell_2+{\alpha_2})}\sum_{n\in \frac{\delta}{kp}(\boldsymbol{\ell}+\boldsymbol{\alpha})+\N_0^2}\widetilde{\mathcal F}_3\left(\frac{kp}{\delta}\sqrt{t}\boldsymbol{n}\right),
\end{align*}
where
\begin{align*}
\mathcal F_3(\boldsymbol{x})&:=\frac12M_2^\ast\left(\sqrt{3};\frac1{\sqrt{2\pi}}\left(\sqrt{3}\left(2x_1+x_2\right),x_2\right)\right)e^{Q(\boldsymbol{x})}, \qquad\widetilde{\mathcal F}_3(\boldsymbol{x}):=\mathcal F_3(-x_1,x_2).
\end{align*}

The contribution from the $\mathcal F_3$ term to the first term in \eqref{EulerMcLaurin} is
$$
\frac{\delta^2}{k^2p^2t} \mathcal I_{\mathcal F_3} \sum_{\boldsymbol{\alpha}\in\mathscr S}\varepsilon(\boldsymbol{\alpha})\sum_{0\leq \boldsymbol{\ell}\leq\frac{kp}{\delta}-1}e^{-2\pi i\frac{h}{k}Q(\boldsymbol{\ell}+\boldsymbol{\alpha})}=0,
$$
conjugating \eqref{sums}. In the same way the main term coming from $\widetilde{\mathcal F_3}$ is shown to vanish.

The contribution to the second term of Euler-Maclaurin is

\begin{multline*}
-2\sum_{\boldsymbol{\alpha}\in\mathscr{S}^\ast}\varepsilon(\boldsymbol{\alpha})\sum_{0\leq\boldsymbol{\ell}\leq\frac{kp}{\delta}-1} e^{-2\pi i\frac{h}{k}Q\left(\boldsymbol{\ell}+\boldsymbol{\alpha}\right)} \sum_{n_2\geq 0}\frac{B_{2n_2+2}\left(\frac{\delta\left(\ell_2+{\alpha_2}\right)}{kp}\right)}{(2n_2+2)!} \\
\times \int_0^\infty\left( \mathcal F_3^{(0, 2n_2+1)}(x_1, 0)+\widetilde{\mathcal{F}}_3^{(0,2n_2+1)}(x_1,0)\right)dx_1\left(\frac{k^2p^2t}{\delta^2}\right)^{n_2}.
\end{multline*}
We now claim that
\begin{equation}\label{intagru}
\int_0^\infty \left(\mathcal F_3^{(0,2n_2+1)}(x_1,0) +\widetilde{\mathcal F}_3^{(0,2n_2+1)}(x_1,0)\right)dx_1
 = (-1)^{n_2}\int_0^\infty \mathcal F_1^{(0,2n_2+1)}(x_1,0)dx_1.
\end{equation}
Firstly the right-hand side of \eqref{intagru} equals
\begin{equation}\label{intagree}
\left[\frac{\partial^{2n_2+1}}{\partial x_2^{2n_2+1}}\int_0^\infty \mathcal F_1(x_1,x_2)dx_1\right]_{x_2=0}=\left[\frac{\partial^{2n_2+1}}{\partial x_2^{2n_2+1}}\left(e^{-\frac{x_2^2}{4}}\int_0^\infty e^{-3\left(x_1+\frac{x_2}{2}\right)^2}dx_1\right)\right]_{x_2=0}.
\end{equation}
Now the integral in \eqref{intagree} evaluates as
$$
\sqrt{\frac{\pi}{3}}\int_{\frac{\sqrt{3} x_2}{2\sqrt{\pi}}}^{\infty}e^{-\pi x_1^2}dx_1=\frac{\sqrt{\pi}}{2\sqrt{3}}\left(1-E\left(\frac{\sqrt{3}x_2}{2\sqrt{\pi}}\right)\right).
$$
Thus \eqref{intagree} becomes
\begin{equation}\label{mainold}
\frac{\sqrt{\pi}}{2\sqrt{3}}\left[\frac{\partial^{2n_2+1}}{\partial x_2^{2n_2+1}}\left(e^{-\frac{x_2^2}{4}}\left(1-E\left(\frac{\sqrt{3}x_2}{2\sqrt{\pi}}\right)\right)\right)\right]_{x_2=0} \!\!\!\!\!\!\!\!= -\frac{\sqrt{\pi}}{2\sqrt{3}}\left[\frac{\partial^{2n_2+1}}{\partial x_2^{2n_2+1}}\left(e^{-\frac{x_2^2}{4}}E\left(\frac{\sqrt{3}x_2}{2\sqrt{\pi}}\right)\right)\right]_{x_2=0}.
\end{equation}
To compute the left-hand side of \eqref{intagru}, we decompose, according to \eqref{M2star},
\begin{align*}\label{splitE2}
M_2^\ast\left(\sqrt{3},\sqrt{3}(2x_1+x_2),x_2\right) =\sgn^*(x_1)\sgn^*(x_2)+h_1(\boldsymbol{x})
-\sgn^*(x_2)h_2(\boldsymbol{x})-\sgn^*(x_1)h_3(\boldsymbol{x}),
\end{align*}
where
\begin{align*}
h_1(\boldsymbol{x}):=E_2\left(\sqrt{3};\sqrt{3}\left(2x_1+x_2\right),x_2\right),\quad h_2(\boldsymbol{x}):=E\left(\sqrt{3}\left(2x_1+x_2\right)\right)\!,\quad h_3(\boldsymbol{x}):=E\left(3x_1+2x_2\right).
\end{align*}
Setting
\[
a_0(\boldsymbol{x}):=e^{Q(\boldsymbol{x})},\qquad a_j(\boldsymbol{x}):=h_j\left(\frac{1}{\sqrt{2\pi}}(\boldsymbol{x})\right)e^{Q(\boldsymbol{x})},
\]
we then obtain
\begin{align*}
&\mathcal F_3^{(0,2n_2+1)}(x_1,0)+\widetilde{\mathcal F}_3^{(0,2n_2+1)}(x_1,0)\\
&\quad=\frac12\left(a_0^{(0,2n_2+1)}(x_1,0)+a_1^{(0,2n_2+1)}(x_1,0)-a_2^{(0,2n_2+1)}(x_1,0)-a_3^{(0,2n_2+1)}(x_1,0)\right)\\
&\qquad+\frac12\left(-a_0^{(0,2n_2+1)}(-x_1,0)+a_1^{(0,2n_2+1)}(-x_1,0)-a_2^{(0,2n_2+1)}(-x_1,0)+a_3^{(0,2n_2+1)}(-x_1,0)\right)\\
&\quad=a_0^{(0,2n_2+1)}(x_1,0)-a_2^{(0,2n_2+1)}(x_1,0),
\end{align*}
using that $a_0$ and $a_1$ are even and $a_2$ and $a_3$ are odd. Plugging in the definition of $a_0$ and $a_2$, we need to consider
\begin{equation}\label{diff}
-\left[\frac{\partial^{2n_2+1}}{\partial x_2^{2n_2+1}}\left(e^{x_2^2}\int_0^\infty e^{3x_1^2+3x_1x_2}M\left(\sqrt{\frac{3}{2\pi}}(2x_1+x_2)\right)dx_1\right)\right]_{x_2=0}.
\end{equation}
Changing variables $w:=\sqrt{\frac{3}{2\pi}}(2x_1+x_2)$, the function in \eqref{diff} before differentiation is
$$
-\sqrt{\frac{\pi}{6}}e^{\frac{x_2^2}{4}}\int_{\sqrt{\frac{3}{2\pi}}x_2}^{\infty}M(w)e^{\frac{\pi w^2}{2}}dw = -\sqrt{\frac{\pi}{6}}e^{\frac{x_2^2}{4}}\left(\int_0^\infty M(w)e^{\frac{\pi w^2}{2}}dw-\int_0^{\sqrt{\frac{3}{2\pi}}x_2}M(w)e^{\frac{\pi w^2}{2}}dw\right).
$$
The first integral vanishes upon differentiating an odd number of times and then setting $x_2=0$. In the second integral we decompose $M(w) = E(w) - 1$.
The contribution of the $E$-function vanishes, since $E$ is an odd function. We are left with
$$
-\sqrt{\frac{\pi}{6}}\!\!\left[\frac{\partial^{2n_2+1}}{\partial x_2^{2n_2+1}}\left(e^{\frac{x_2^2}{4}}\int_0^{\sqrt{\frac{3}{2\pi}}x_2}e^{\frac{\pi w^2}{2}}dw\right)\right]_{x_2=0} \!\!\!\!\!\!\!\!\!= -\sqrt{\frac{\pi}{6}}i^{-2n_2-1}\!\!\left[\frac{\partial^{2n_2+1}}{\partial x_2^{2n_2+1}}\left(e^{-\frac{x_2^2}{4}}\int_0^{\sqrt{\frac{3}{2\pi}}x_2i}e^{\frac{\pi w^2}{2}}dw\right)\right]_{x_2=0}\!\!\!.
$$
The integral equals
$$
i\sqrt 2 \int_0^{\frac{\sqrt{3}x_2}{2\sqrt{\pi}}}e^{-\pi w^2}dw = \frac{i}{\sqrt 2}E\left(\frac{\sqrt{3}x_2}{2\sqrt{\pi}}\right).
$$
Thus we obtain
$$
\frac{\sqrt{\pi}}{2\sqrt{3}} (-1)^{n_2+1}\left[\frac{\partial^{2n_2+1}}{\partial x_2^{2n_2+1}}\left(e^{-\frac{x_2^2}{4}}E\left(\frac{\sqrt{3}x_2}{2\sqrt{\pi}}\right)\right)\right]_{x_2=0},
$$
as claimed, by comparing with \eqref{mainold}.

In the same way one can show that the third term in Euler-Maclaurin equals 
\begin{multline*}
- 2\sum_{\boldsymbol{\alpha}\in\mathscr{S}^\ast}\varepsilon(\boldsymbol{\alpha})\sum_{0\leq\boldsymbol{\ell}\leq\frac{kp}{\delta}-1} e^{-2\pi i\frac{h}{k}Q\left(\boldsymbol{\ell}+\boldsymbol{\alpha}\right)}\sum_{n_1\geq 0}\frac{B_{2n_1+2}\left(\frac{\delta\left(\ell_1+{\alpha_1}\right)}{kp}\right)}{(2n_1+2)!}\int_0^\infty \mathcal F_1^{(2n_1+1,0)}(0,x_2)dx_2\left(-\frac{k^2p^2t}{\delta^2}\right)^{n_1}.
\end{multline*}

The contribution to the final term is, pairing as in Section 4
\begin{multline*}
2\sum_{\boldsymbol{\alpha}\in\mathscr{S}^\ast}\varepsilon(\boldsymbol{\alpha}) \sum_{0\leq\boldsymbol{\ell}\leq\frac{kp}{\delta}-1} e^{-2\pi i\frac{h}{k}Q\left(\boldsymbol{\ell}+\boldsymbol{\alpha}\right)}\sum_{n_1, n_2\geq 0\atop{n_1\equiv n_2\pmod{2}}}\frac{B_{n_1+1}\left(\frac{\delta\left(\ell_1+{\alpha_1}\right)}{kp}\right)}{(n_1+1)!}\frac{B_{n_2+1}\left(\frac{\delta\left(\ell_2+{\alpha_2}\right)}{kp}\right)}{(n_1+1)!}\\
\times\left(\mathcal F_3^{(\boldsymbol{n})}(\boldsymbol{0})-(-1)^{n_1}\widetilde{\mathcal F}_3^{(\boldsymbol{n})}(\boldsymbol{0})\right)\left(\frac{kp\sqrt{t}}{\delta}\right)^{n_1+n_2}
.
\end{multline*}
We next show that
\begin{equation*}
\mathcal F_3^{(n_1, n_2)}(\boldsymbol{0})-(-1)^{n_1}\widetilde{\mathcal F}_3^{(n_1, n_2)}(\boldsymbol{0})=i^{n_1+n_2}\mathcal F_1^{(n_1,n_2)}(\boldsymbol{0}).
\end{equation*}

\noindent For this, we compute
\begin{align*}
&\mathcal F_3^{(n_1,n_2)}(\boldsymbol{0})-(-1)^{n_1}\widetilde{\mathcal F}_3^{(n_1,n_2)}(\boldsymbol{0})=a_0^{(n_1,n_2)}(\boldsymbol{0})-a_3^{(n_1,n_2)}(\boldsymbol{0}).
\end{align*}
Since $a_3(-x_1,-x_2)=-a_3(\boldsymbol{x})$, we obtain
$$
a_3^{(n_1,n_2)}(\boldsymbol{0})=(-1)^{n_1+n_2+1}a_3^{(n_1,n_2)}(\boldsymbol{0}).
$$
Because in the sums of interest $n_1\equiv n_2\pmod{2}$, the contribution of $a_3$ vanishes. As claimed, we are left with
\begin{align*}
a_0^{(n_1,n_2)}(\boldsymbol{0})&=i^{n_1+n_2}\left[\frac{\partial^{n_1}}{\partial x_1^{n_1}}\ \frac{\partial^{n_2}}{\partial x_2^{n_2}}e^{-Q(\boldsymbol{x})}\right]_{\boldsymbol{x}=0}= i^{n_1+n_2}\mathcal F_1^{(\boldsymbol{n})}(\boldsymbol{0}).
\end{align*}

Finally, the contribution from $H_1$ gives, observing that the Euler-Maclaurin main term vanishes,
\begin{align*}
\sum_{0\leq r \leq \frac{kp}{\delta}-1} e^{-2\pi i \frac{h}{k}\left(r+\frac1p\right)^2} \sum_{m\geq 0} \frac{B_{2m+1}\left(\frac{\delta\left(r+\frac{1}{p}\right)}{kp}\right)}{(2m+1)!}\mathcal F_4^{(2m)}(0)\left(\frac{k^2p^2}{\delta^2}t\right)^m
\end{align*}
with $\mathcal F_4(x):= M(\sqrt{\frac{2}{\pi}}x)e^{x^2}$. The claim then follows, observing that
\begin{equation*}
\mathcal F_4^{(2m)}(0)=(-1)^{m+1}\left[\frac{\partial^{2m}}{\partial x^{2m}} e^{-x^2}\right]_{x=0} = (-1)^{m+1} \mathcal F_2^{(2m)}(0).
\end{equation*}
\end{proof}

\subsection{Asymptotics of $\mathcal E_2$}
We write
$$
F_2\left(e^{2\pi i\frac{h}{k}-t}\right)\sim \sum_{m\geq 0} b_{h,k}(m) t^m \qquad \left(t\to 0^+\right),
$$
\begin{theorem}\label{matchas2}
	We have, for $h,k\in\Z$ with $k>0$ and $\gcd(h,k)=1,$
	$$
	\mathbb E_2\left(\frac{h}{k} + \frac{it}{2\pi}\right) \sim  \sum_{m\geq 0} b_{-h,k}(m) (-t)^{m}\qquad \left(t\to 0^+\right).
	$$
\end{theorem}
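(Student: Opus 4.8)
The plan is to mirror the proof of Theorem \ref{asagree}, replacing every input about $F_1$ and $\mathbb{E}_1$ by its weight-two analogue. First I would invoke Lemma \ref{Eichler2} to express $\mathbb{E}_2$ as a lattice sum over $\boldsymbol{n}\in\boldsymbol{\alpha}+\Z^2$ of the $z$-derivative at $z=0$ of $M_2(\dots)e^{2\pi i n_2 z}$ times $q^{-Q(\boldsymbol{n})}$. Using \eqref{defineH}, this derivative splits into a ``main'' piece $n_2 M_2(\sqrt{3};\sqrt{3v}(2n_1+n_2),\sqrt{v}n_2)$ and an ``extra'' piece $\frac{1}{2\pi\sqrt{v}}e^{-\pi(3n_1+2n_2)^2v}M(\sqrt{3v}n_1)$; I would carry both through the Euler--Maclaurin analysis separately. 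As in the proof of Theorem \ref{asagree}, the evenness of $M_2$ lets me fold the sum onto $\boldsymbol{n}\in\boldsymbol{\alpha}+\N_0^2$ at the cost of introducing the reflected index set $\widetilde{\mathscr{S}}$ together with reflected weights built from $\eta$ (the analogue of $\widetilde{\varepsilon}$).

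Second, I would regularize by turning every $\sgn$ into $\sgn^\ast$, forming an $M_2^\ast$-version and splitting $\mathbb{E}_2=\mathcal{E}_2^\ast+H_2$ exactly as $\mathbb{E}_1=\mathcal{E}_1^\ast+H_1$. The boundary term $H_2$ should now carry the one-variable $M(2\sqrt{v}m)$-contributions that asymptotically reproduce the $F_{2,2}$ piece $-\frac12\sum|m|q^{m^2}$ (the $|m|$, rather than $\sgn(m)$, reflecting that $\mathbb{E}_2$ is the weight-two companion). With this in place I would apply \eqref{EulerMcLaurin} to each of $\mathcal{E}_2^\ast$ and $H_2$.

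The comparison then breaks into three matches. (i) The Euler--Maclaurin main ($t^{-2}$) term vanishes: this follows by conjugating ($h\mapsto -h$) the vanishing already established in Subsection \ref{functionf2}. (ii) The growing ($t^{-1}$) contributions must cancel internally, just as in $F_2$, which is the genuinely new feature relative to Theorem \ref{asagree}. Recall that $F_2$ itself has a nonzero $t^{-1}$ contribution from \eqref{grow1} that only cancels against the $F_{2,2}$ main term via \eqref{sumsmatch}; correspondingly, I would show that the $t^{-1}$ contribution from the $\mathcal{E}_2^\ast$-type double sum cancels against that from the boundary term $H_2$, using (the conjugate of) \eqref{sumsmatch}. (iii) The remaining power-series coefficients match term by term: here I would establish integral and pointwise-derivative identities relating the $\mathcal{F}$-data of $\mathbb{E}_2$ to that of $F_2$, in the spirit of \eqref{intagru} and the subsequent computations in the proof of Theorem \ref{asagree}, now with the extra weight coming from the $n_2$-factor and the derivative structure, and with the ``extra'' $M(\sqrt{3v}n_1)$ piece tracked throughout.

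The main obstacle will be step (ii) together with the bookkeeping of the ``extra'' piece from \eqref{defineH}: unlike the $\mathbb{E}_1$ case, where all growing contributions simply vanish, here one must verify an exact cancellation of $t^{-1}$ terms, and simultaneously confirm that the $M(\sqrt{3v}n_1)$ contribution, which has no analogue in Theorem \ref{asagree}, conspires with the main $n_2 M_2$ piece to reproduce precisely the conjugated coefficients $b_{-h,k}(m)(-t)^m$. The underlying integral identities are routine, being parallel to \eqref{intagru} and the evaluations in Lemma \ref{Eichler2}, so the difficulty is organizational rather than conceptual.
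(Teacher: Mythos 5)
Your plan follows the paper's proof in all its main lines: the split of $\mathbb E_2$ via Lemma \ref{Eichler2} and \eqref{defineH} into the $n_2M_2$-piece and the extra $M(\sqrt{3v}n_1)$-piece, the folding onto $\N_0^2$ with $\widetilde{\mathscr S}$ and $\widetilde\eta$, the regularization $\sgn\mapsto\sgn^\ast$ with boundary term $H_2(\tau)=\frac12\sum_{\beta}\sum_{m\in\beta+\N_0}mM(2\sqrt v m)q^{-m^2}$ matching $F_{2,2}$, and the use of the conjugate of \eqref{sumsmatch} to cancel the $t^{-1}$ contributions. (In the paper the extra piece also gets its own regularization, $\mathcal E_{2,2}=\mathcal E_{2,2}^\ast+H_3$ via $\lim_{x\to0^+}M^\ast(\pm x)=\mp1$, and its two growing terms cancel against each other just as those of $\mathcal E_2^\ast$ and $H_2$ do, so your pairwise picture in step (ii) is essentially what happens in \eqref{mainat}.)

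However, your closing claim that the remaining identities are ``routine, being parallel to \eqref{intagru}'' and that the difficulty is ``organizational rather than conceptual'' hides a genuine gap. At weight two the analogue of \eqref{intagru} is \emph{not} an exact match: the identity \eqref{want} produces, besides the term corresponding to \eqref{thirdterm}, a leftover contribution $\frac1{\sqrt2}\bigl[\frac{\partial^{2n_1}}{\partial x_1^{2n_1}}e^{3x_1^2/4}\bigr]_{x_1=0}$ at \emph{every} order $n_1\geq1$, and the extra piece $\mathcal E_{2,2}^\ast$ produces a second such family \eqref{leftover}. These two families do not cancel each other (their coefficients are $-\sqrt2$ and $\tfrac1{\sqrt2}$), and they match nothing in the expansion of $F_2$. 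The proof therefore needs the vanishing, for all $n\in\N$, of
\begin{equation*}
\sum_{\boldsymbol{\alpha}\in\mathscr S^\ast}\sum_{0\leq\boldsymbol{\ell}\leq\frac{kp}{\delta}-1}
e^{-2\pi i\frac{h}{k}Q(\boldsymbol{\ell}+\boldsymbol{\alpha})}\,
B_{2n+1}\!\left(\tfrac{\delta(\ell_1+\alpha_1)}{kp}\right),
\end{equation*}
which is \emph{not} the conjugate of \eqref{sumsmatch} (that identity concerns $B_1$ and equates the sum to a nonzero theta-type sum, handling only the $t^{-1}$ level). It is a new arithmetic input, proved in the paper by a case analysis on $\frac p\delta\in\{1,2\}$ versus $\frac p\delta\notin\{1,2\}$, using \eqref{claimquad}, the reflection $B_{2n+1}(1-x)=-B_{2n+1}(x)$, and the fact that $B_{2n+1}(0)=B_{2n+1}(\tfrac12)=0$ for $n\geq1$. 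Without identifying and proving this vanishing, your step (iii) would fail: the asymptotic expansion of $\mathbb E_2$ would appear to contain terms with no counterpart in $\sum_m b_{-h,k}(m)(-t)^m$. This ingredient has no analogue in Theorem \ref{asagree}, so it is precisely the conceptual (not merely organizational) novelty of the weight-two case.
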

\begin{proof}
	We write, using Lemma \ref{Eichler2} and \eqref{defineH}
	$$
	\mathbb E_2(\tau) = \mathcal E_{2,1}(\tau) +\mathcal E_{2,2}(\tau),
	$$
	where
	\begin{align*}
	\mathcal E_{2,1}(\tau)&:= \frac12\sum_{\boldsymbol{\alpha}\in\mathscr S} \eta(\boldsymbol{\alpha}) \sum_{\boldsymbol{n}\in\boldsymbol{\alpha}+\N_0^2} n_2M_2\left(\sqrt{3};\sqrt{3v}(2n_1+n_2),\sqrt{v}n_2\right)q^{-Q(\boldsymbol{n})}\\
	&\qquad + \frac12\sum_{\boldsymbol{\alpha}\in\widetilde{\mathscr S}} \widetilde{\eta}(\boldsymbol{\alpha}) \sum_{\boldsymbol{n}\in\boldsymbol{\alpha}+\N_0^2} n_2M_2\left(\sqrt{3};\sqrt{3v}(-2n_1+n_2),\sqrt{v}n_2\right)q^{-Q(-n_1,n_2)},\\
	\mathcal E_{2,2}(\tau)&:= \frac{1}{4\pi\sqrt{v}}\sum_{\boldsymbol{\alpha}\in\mathscr S} \eta(\boldsymbol{\alpha}) \sum_{\boldsymbol{n}\in\boldsymbol{\alpha}+\N_0^2} e^{-\pi(3n_1+2n_2)^2 v} M\left(\sqrt{3v}n_1\right)q^{-Q(\boldsymbol{n})}\\
	&\qquad + \frac{1}{4\pi\sqrt{v}}\sum_{\boldsymbol{\alpha}\in\widetilde{\mathscr S}} \widetilde{\eta}(\boldsymbol{\alpha}) \sum_{\boldsymbol{n}\in\boldsymbol{\alpha}+\N_0^2} e^{-\pi(-3n_1+2n_2)^2 v} M\left(\sqrt{3v}n_1\right)q^{-Q(-n_1,n_2)},
	\end{align*}
	where $\widetilde{\eta}(\boldsymbol{\alpha}):=\eta(1-\alpha_1,\alpha_2)$.
	We then again use \eqref{limit}, to split
	$$
	\mathcal E_{2,1}(\tau) = \mathcal E_{2}^*(\tau) + H_2(\tau),
	$$
	where
	\begin{align*}
	\mathcal E_2^*(\tau)&:=\frac12\sum_{\boldsymbol{\alpha}\in\mathscr S} \eta(\boldsymbol{\alpha}) \sum_{\boldsymbol{n}\in\boldsymbol{\alpha}+\N_0^2} n_2M_2^*\left(\sqrt{3};\sqrt{3v}(2n_1+n_2),\sqrt{v}n_2\right)q^{-Q(\boldsymbol{n})}\\
	&\qquad+\frac12\sum_{\boldsymbol{\alpha}\in\widetilde{\mathscr S}} \widetilde{\eta}(\boldsymbol{\alpha}) \sum_{\boldsymbol{n}\in\boldsymbol{\alpha}+\N_0^2} n_2M_2^*\left(\sqrt{3};\sqrt{3v}(-2n_1+n_2),\sqrt{v}n_2\right)q^{-Q(-n_1,n_2)},\\
	H_2(\tau)&:= \frac12\sum_{\beta\in\left\{\frac1p,1-\frac1p\right\}} \sum_{m\in\beta+\N_0} m M\left(2\sqrt{v}m\right) q^{-m^2}.
	\end{align*}
	Using that $\lim_{x\to 0^+}M^\ast(\pm x)=\mp 1$, where
	we let $M^*(x):=E(x)-{\rm sgn}^*(x)$, we split
	$$
	\mathcal E_{2,2}(\tau) = \mathcal E_{2,2}^*(\tau)+H_3(\tau),
	$$
	where
	\begin{align*}
	\mathcal E_{2,2}^*(\tau):=&\frac{1}{4\pi \sqrt{v}}\sum_{\boldsymbol{\alpha}\in\mathscr S}\eta(\boldsymbol{\alpha})\sum_{\boldsymbol{n}\in\boldsymbol{\alpha}+\N_0^2}e^{-\pi (3n_1+2n_2)^2v}M^\ast\left(\sqrt{3v}n_1\right)q^{-Q(\boldsymbol{n})}\\
	&+\frac{1}{4\pi \sqrt{v}}\sum_{\boldsymbol{\alpha}\in\widetilde{\mathscr S}}\widetilde{\eta}(\boldsymbol{\alpha})\sum_{\boldsymbol{n}\in\boldsymbol{\alpha}+\N_0^2}e^{-\pi (-3n_1+2n_2)^2v}M^\ast\left(-\sqrt{3v}n_1\right)q^{-Q(-n_1,n_2)},\\
	H_3(\tau):=&\frac{1}{4\pi \sqrt{v}}\sum_{\beta\in\left\{\frac1p,1-\frac1p\right\}}\sum_{m\in \beta+\N_0}e^{-4\pi m^2v}q^{-m^2}.
	\end{align*}
	
We first investigate asymptotic properties of $\mathcal E_{2}^\ast$. Writing $\mathcal G_3(\boldsymbol{x}):= x_2\mathcal F_3(\boldsymbol{x})$ and \\ $\widetilde{\mathcal G}_3(\boldsymbol{x}):=\mathcal G_3(-x_1,x_2)$, we have
\begin{align*}
\mathcal E_2^\ast\left(\frac{h}{k}+\frac{it}{2\pi}\right)&= \frac{1}{\sqrt{t}}\sum_{\boldsymbol{\alpha}\in\mathscr S} \eta(\boldsymbol{\alpha})\sum_{0\leq \boldsymbol{\ell}\leq\frac{kp}{\delta}-1}e^{-2\pi i\frac{h}{k}Q(\boldsymbol{\ell}+\boldsymbol{\alpha})}\sum_{\boldsymbol{n}\in\frac{\delta}{kp}(\boldsymbol{\ell}+\boldsymbol{\alpha})+\N_0^2}\mathcal G_3\left(\frac{kp}{\delta}\sqrt{t}\boldsymbol{n}\right)\\
&\qquad+\frac{1}{\sqrt{t}}\sum_{\boldsymbol{\alpha}\in\widetilde{\mathscr S}}\widetilde{\eta}(\boldsymbol{\alpha}) \sum_{0\leq \boldsymbol{\ell}\leq\frac{kp}{\delta}-1}e^{-2\pi i\frac{h}{k}Q(-\ell_1-\alpha_1,\ell_2+\alpha_2)}\sum_{\boldsymbol{n}\in\frac{\delta}{kp}(\boldsymbol{\ell}+\boldsymbol{\alpha})+\N_0^2}\widetilde{\mathcal G}_3\left(\frac{kp}{\delta}\sqrt{t}\boldsymbol{n}\right).
\end{align*}
The contribution from $\mathcal G_3$ to the Euler-Maclaurin main term is, as in Subsection \ref{functionf2}, 
\begin{align*}
\frac{\delta^2}{k^2p^2t^\frac{3}{2}}\mathcal I_{\mathcal G_3} \sum_{\boldsymbol{\alpha}\in \mathscr{S}}\eta(\boldsymbol{\alpha}) \sum_{0\leq \boldsymbol{\ell}\leq \frac{kp}{\delta}-1} e^{-2\pi i \frac{h}{k} Q(\boldsymbol{\ell}+\boldsymbol{\alpha})} =0.
\end{align*}
In the same way we see that the contribution from $\widetilde{\mathcal G}_3$ to the main term vanishes.

The contribution to the second term in Euler-Maclaurin is, as in Subsection \ref{functionf2},
\begin{align*}
-2\sum_{\boldsymbol{\alpha}\in\mathscr S^*} & \sum_{0\leq \boldsymbol{\ell}\leq\frac{kp}{\delta}-1}  e^{-2\pi i\frac{h}{k}Q(\boldsymbol{\ell}+\boldsymbol{\alpha})}\sum_{n_2\geq 1} \frac{B_{2n_2+1}\left(\frac{\delta(\ell_2+\alpha_2)}{kp}\right)}{(2n_2+1)!} \\
& \qquad\qquad\times \int_0^\infty \left(\mathcal G_3^{(0,2n_2)}(x_1,0)+\widetilde{\mathcal G}_3^{(0,2n_2)}(x_1,0)\right)dx_1\left(\frac{kp}{\delta}\right)^{2n_2-1}t^{n_2-1}.
\end{align*}

We claim that
\begin{equation}\label{matchGint}
\int_0^\infty \left(\mathcal G_3^{(0,2n_2)}(x_1,0) +\widetilde{\mathcal G}_3^{(0,2n_2)}(x_1,0)\right) dx_1=(-1)^{n_2+1}\int_0^\infty \mathcal G_1^{(0,2n_2)}(x_1,0)dx_1.
\end{equation}
Since we need to differentiate the $x_2$-factor exactly once,
we have
\begin{align*}
\mathcal G_3^{(0,2n_2)}(x_1,0)+\widetilde{\mathcal{G}}_3^{(0,2n_2)} (x_1,0)&=2n_2\left(\mathcal{F}_3^{(0,2n_2-1)}(x_1,0)+\widetilde{\mathcal{F}}_3^{(0,2n_1-1)}(x_1,0)\right),\\
\mathcal G_1^{(0,2n_2)} (x_1,0)&=2n_2 \mathcal F_1^{(0,2n_2-1)}(x_1,0).
\end{align*}
The claim \eqref{matchGint} then follows from \eqref{intagru}.
This gives the correspondence to \eqref{second}.

The third term in Euler-Maclaurin is, in the same way,
\begin{align*}
-2\sum_{\boldsymbol{\alpha}\in\mathscr S^\ast} & \sum_{0\leq \boldsymbol{\ell}\leq \frac{kp}{\delta}-1}e^{-2\pi i\frac{h}{k}Q(\boldsymbol{\ell}+\boldsymbol{\alpha})}\sum_{n_1\geq 0} \frac{B_{2n_1+1}\left(\frac{\delta\left(\ell_1+\alpha_1\right)}{kp}\right)}{(2n_1+1)!}\\ 
& \qquad\qquad\times\int_0^\infty \left(\mathcal G_3^{(2n_1,0)} (0,x_2)-\widetilde{\mathcal G}_3^{(2n_1,0)} (0,x_2)\right)dx_2\left(\frac{kp}{\delta}\right)^{2n_1-1}t^{n_1-1}.
\end{align*}
To relate this to \eqref{thirdterm} (skipping the $n_1=0$ term in both cases), we compute that
\begin{align*}
\mathcal G_3^{(2n_1,0)}(0,x_2)-\widetilde{\mathcal G}_3^{(2n_1,0)} (0,x_2)=x_2\left(a_0^{(2n_1,0)}(0,x_2)-a_3^{(2n_1,0)}(0,x_2)\right).
\end{align*}
Note that
\begin{align}\label{computediff}
a_0(\boldsymbol{x})-a_3(\boldsymbol{x})=-e^{Q(\boldsymbol{x})}M^\ast\left(\sqrt{\frac1{2\pi}}(3x_1+2x_2)\right).
\end{align}
We next show that
\begin{multline}\label{want}
 \int_0^\infty x_2\left(a_0^{(2n_1,0)}(0,x_2)-a_3^{(2n_1,0)}(0,x_2)\right)dx_2\\
=(-1)^{n_1+1}\int_0^\infty x_2\left[\frac{\partial^{2n_1}}{\partial x_1^{2n_1}}\left(e^{-x_2^2-3x_1x_2-3x_1^2}\right)\right]_{x_1=0}dx_2+\frac1{\sqrt{2}}\left[\frac{\partial^{2n_1}}{\partial x_1^{2n_1}}e^{\frac{3x_1^2}{4}}\right]_{x_1=0},
\end{multline}
where the first terms on the right-hand side corresponds to \eqref{thirdterm}. We write it as
\[
(-1)^{n_1+1}\left[\frac{\partial^{2n_1}}{\partial x_1^{2n_1}} \left(e^{-\frac{3x_1^2}{4}} \int_0^\infty x_2 e^{-\left(x_2+\frac{3x_1}{2}\right)^2}dx_2\right)\right]_{x_1=0}.
\]
Now we let
\begin{align*}
f(x_1)e^{\frac{3x_1^2}{4}}(-1)^{n_1+1}:=\int_{\frac{3x_1}{2}}^{\infty}\left(x_2-\frac{3x_1}{2}\right) e^{-x_2^2}dx_2=\frac12 e^{-\frac{9x_1^2}{4}}-\frac{3x_1}{2} \frac{\sqrt{\pi}}{2}\left(1-E\left(\frac{3x_1}{2\sqrt{\pi}}\right)\right),
\end{align*}
using integration by parts. We then compute (using $n_1>0$)
\begin{equation}\label{difff}
f^{(2n_1)}(0)=\frac{(-1)^{n_1+1}}2\left[\frac{\partial^{2n_1}}{\partial x_1^{2n_1}}e^{-3x_1^2}\right]_{x_1=0}+\frac{(-1)^{n_1+1}3\sqrt{\pi}}{4}
\left[\frac{\partial^{2n_1}}{\partial x_1^{2n_1}}\left(x_1 e^{-\frac{3x_1^2}{4}}E\left(\frac{3x_1}{2\sqrt{\pi}}\right)\right)\right]_{x_1=0}.
\end{equation}

For the left-hand side of \eqref{want} we use \eqref{computediff} and consider
\[
-\left[ \frac{\partial^{2n_1}}{\partial x_1^{2n_1}} \left(\int_0^\infty x_2 M^*\left(\frac{2x_2+3x_1}{\sqrt{2\pi}}\right)e^{3x_1^2+3x_1x_2+x_2^2}dx_2\right)\right]_{x_1=0}.
\]
Making the change of variables $u=\frac{2x_2+3x_1}{\sqrt{2\pi}}$, the integral before differentiation (including the minus sign) becomes
\begin{equation}\label{int}
-\sqrt{\frac{\pi}{2}} e^{\frac{3x_1^2}{4}} \frac12 \int_{\frac{3x_1}{\sqrt{2\pi}}}^\infty \left(\sqrt{2\pi} u-3x_1\right)M^*(u)e^{\frac{\pi u^2}{2}}du.
\end{equation}
Using integration by parts, the contribution from $\sqrt{2\pi }u$ equals
\[
\frac12 e^{3x_1^2} \left(E\left(\frac{3x_1}{\sqrt{2\pi}}\right)-1\right)+\frac1{\sqrt{2}}e^{\frac{3x_1^2}{4}}\left(1-E\left(\frac{3x_1}{2\sqrt{\pi}}\right)\right).
\]
Thus differentiating $2n_1$ times with respect to $x_1$ and then setting $x_1=0$ gives (using that $z\mapsto E(z)$ is odd)
\[
-\frac1{2}\left[\frac{\partial^{2n_1}}{\partial x_1^{2n_1}}e^{3x_1^2}\right]_{x_1=0}\!\!\!+\frac1{\sqrt{2}}
\left[\frac{\partial^{2n_1}}{\partial x_1^{2n_1}}e^{\frac{3x_1^2}{4}}\right]_{x_1=0}
\!\!\!=-\frac12(-1)^{n_1}\left[\frac{\partial^{2n_1}}{\partial x_1^{2n_1}}e^{-3x_1^2}\right]_{x_1=0}
\!\!\!+\frac{1}{\sqrt{2}}\left[\frac{\partial^{2n_1}}{\partial x_1^{2n_1}}e^{\frac{3x_1^2}{4}}\right]_{x_1=0}.
\]
The first term matches the first term in \eqref{difff}, the second term is the second term on the right-hand side of \eqref{want}. For the second term in \eqref{int}, we split
\[
\frac32\sqrt{\frac{\pi}{2}}e^{\frac{3x_1^2}{4}}x_1
\left(\int_0^\infty M^\ast(u) e^{\frac{\pi u^2}{2}}du-\int_0^{\frac{3x_1}{\sqrt{2\pi}}}E(u)e^{\frac{\pi u^2}{2}}du
+\int_0^{\frac{3x_1}{\sqrt{2\pi}}}e^{\frac{\pi u^2}{2}}du\right).
\]
Since we take an even number of derivatives only the last term survives, yielding the contribution
\begin{equation*}
\frac32\sqrt{\frac{\pi}{2}}
\left[\frac{\partial^{2n_1}}{\partial x_1^{2n_1}}\left(x_1e^{\frac{3x_1^2}{4}}\int_0^{\frac{3x_1}{\sqrt{2\pi}}}e^{\frac{\pi u^2}{2}}du\right)\right]_{x_1=0}
=
-(-1)^{n_1}\frac{3\sqrt{\pi}}{4}\left[\frac{\partial^{2n_1}}{\partial x_1^{2n_1}}\left(x_1e^{-\frac{3x_1^2}{4}}	E\left(\frac{3x_1}{2\sqrt{\pi}}\right)\right)\right]_{x_1=0}.
\end{equation*}
This is the second term in \eqref{difff}, which implies \eqref{want}.

The left-over term from \eqref{want} overall contributes as
\begin{equation*}
-\sqrt{2}\sum_{\boldsymbol{\alpha}\in\mathscr{S}^\ast}\sum_{0\leq \boldsymbol{\ell}\leq\frac{kp}{\delta}-1}e^{-2\pi i\frac{h}{k}Q(\boldsymbol{\ell}+\boldsymbol{\alpha})}\sum_{n_1\geq 1}\frac{B_{2n_1+1}\left(\frac{\delta\left(\ell_1+\alpha_1\right)}{kp}\right)}{(2n_1+1)!}\left[\frac{\partial^{2n_1}}{\partial x_1^{2n_1}}e^{\frac{3x_1^2}{4}}\right]_{x_1=0}\left(\frac{kp}{\delta}\right)^{2n_1-1}t^{n_1-1}.
\end{equation*}

The final term in Euler-Maclaurin is
\begin{multline*}
2\sum_{\boldsymbol{\alpha}\in\mathscr S^\ast}\sum_{0\leq \boldsymbol{\ell}\leq \frac{kp}{\delta}-1}e^{-2\pi i\frac{h}{k}Q(\boldsymbol{\ell}+\boldsymbol{\alpha})} \sum_{\substack{n_1,n_2\geq 0\\ n_1\not\equiv n_2\pmod{2}}}\frac{B_{n_1+1}\left(\frac{\delta(\ell_1+\alpha_1)}{kp}\right)}{(n_1+1)!} \frac{B_{n_2+1}\left(\frac{\delta(\ell_2+\alpha_2)}{kp}\right)}{(n_2+1)!} \\\times\left(\mathcal G_3^{(n_1,n_2)} (\boldsymbol{0})+(-1)^{n_1+1} \widetilde{\mathcal G}_3^{(n_1,n_2)}(\boldsymbol{0})\right)\left(\frac{kp}{\delta}\right)^{n_1+n_2} t^{\frac{n_1+n_2-1}{2}}.
\end{multline*}
Then
\begin{align*}
\mathcal G_3^{(n_1,n_2)}(\boldsymbol{0})+(-1)^{n_1+1}\widetilde{\mathcal G}_3^{(n_1,n_2)}(\boldsymbol{0}) = i^{n_1+n_2-1}\mathcal G_1^{(n_1,n_2)}(\boldsymbol{0})
\end{align*}
gives the relation to \eqref{F2main}.

We next consider $H_2$.   We have, with $\mathcal G_4(x):=x\mathcal F_4(x)$,
$$
H_2\left(\frac{h}{k}+\frac{it}{2\pi}\right)=\frac1{2\sqrt{t}}\sum_{\beta \in \left\{\frac1{p},1-\frac1{p}\right\}}\sum_{0\leq r\leq \frac{kp}{\delta}-1} e^{-2\pi i\frac{h}{k}(r+\beta)^2} \sum_{m\in \frac{(r+\beta)\delta}{kp}+\N_0} \mathcal G_4\left(\frac{kp}{\delta}\sqrt{t}m\right).
$$
The Euler-Maclaurin main term is
$$
\frac1{2\sqrt{t}} \frac{\delta}{kp\sqrt{t}} \mathcal I_{\mathcal G_4} \sum_{\beta\in\left\{\frac1{p},1-\frac1{p}\right\}}\sum_{0\leq r\leq \frac{kp}{\delta}-1}e^{-2\pi i\frac{h}{k}(r+\beta)^2}= \frac{\delta}{kpt}\mathcal I_{\mathcal G_4} \sum_{r\pmod{\frac{kp}{\delta}}} e^{-2\pi i\frac{h}{k}\left(r+\frac1p\right)^2}.
$$

The second term becomes
$$
-\sum_{0\leq r\leq \frac{kp}{\delta}-1} e^{-2\pi i\frac{h}{k}\left(r+\frac1{p}\right)^2} \sum_{m\geq 0} \frac{B_{2m+2}\left(\frac{\delta\left(r+\frac1{p}\right)}{kp}\right)}{(2m+2)!} \mathcal G_4^{(2m+1)}(0) \left(\frac{kp}{\delta}\right)^{2m+1}t^m.
$$
Then
$$
\mathcal G_4^{(2m+1)}(0)=(2m+1)\mathcal F_4^{(2m)}(0) = (2m+1)(-1)^{m+1} \mathcal F_2^{(2m)}(0) = (-1)^{m+1}\mathcal G_2^{(2m+1)}(0).
$$
gives the relation to \eqref{splitsum}.

Finally, we consider $\mathcal E_{2,2}$. We first study $\mathcal E_{2,2}^\ast$ and write
\begin{align*}
\mathcal E_{2,2}^\ast \left(\frac{h}{k}+\frac{it}{2\pi}\right) =& \frac1{\sqrt{\pi t}} \sum_{\boldsymbol{\alpha} \in \mathscr S}\eta (\boldsymbol{\alpha})\sum_{0\leq \boldsymbol{\ell}\leq \frac{kp}{\delta}-1}e^{-2\pi i\frac{h}{k}Q(\boldsymbol{\ell}+\boldsymbol{\alpha})} \sum_{n \in \frac{\delta}{kp}(\boldsymbol{\ell}+\boldsymbol{\alpha})+\N_0^2} \mathcal G_5\left(\frac{kp}{\delta} \sqrt{t}\boldsymbol{n}\right)\\&+\frac1{\sqrt{\pi t}}\sum_{\boldsymbol{\alpha}\in \widetilde{\mathscr S}} \widetilde \eta (\boldsymbol{\alpha})\sum_{0\leq \boldsymbol{\ell}\leq \frac{kp}{\delta}-1} e^{-2\pi i\frac{h}{k}Q(-\ell_1-\alpha_1,\ell_2+\alpha_2)} \sum_{n \in \frac{\delta}{kp}(\boldsymbol{\ell}+\boldsymbol{\alpha})+\N_0^2} \widetilde{\mathcal G}_5 \left(\frac{kp}{\delta}\sqrt{t} \boldsymbol{n}\right),
\end{align*}
where
\[
\mathcal G_5(\boldsymbol{x}):= \frac{1}{2\sqrt{2}}e^{-\frac{3x_1^2}{2}-3x_1x_2-x_2^2 }M^*\left(\sqrt{\frac{3}{2\pi}}x_1\right),\quad \widetilde{\mathcal G}_5(\boldsymbol{x}):= {\mathcal G}_5(-x_1,x_2).
\]
As before the main term in Euler-Maclaurin vanishes. The second term equals
\begin{multline*}
-\frac2{\sqrt{\pi t}} \sum_{\boldsymbol{\alpha}\in \mathscr S^\ast} \sum_{0\leq \boldsymbol{\ell}\leq \frac{kp}{\delta}-1} e^{-2\pi i\frac{h}{k}Q(\boldsymbol{\ell}+\boldsymbol{\alpha})} \sum_{n_2\geq 0} \frac{B_{2n_2+1}\left(\frac{\delta(\ell_2+\alpha_2)}{kp}\right)}{(2n_2+1)!}\\\times \int_0^\infty \left(\mathcal G_5^{(0,2n_2)}(x_1,0)+\widetilde{\mathcal G}_5 ^{(0,2n_2)} (x_1,0)\right)dx_1 \left(\frac{kp}{\delta}\right)^{2n_2-1}t^{n_2-\frac12}.
\end{multline*}
It is however not hard to see that
$$
\mathcal G_5^{(0,2n_2)} (x_1,0) +\widetilde{\mathcal G}_5 ^{(0,2n_2)} (x_1,0)=0.
$$
The third term in Euler-Maclaurin is
\begin{multline*}
-\frac{2}{\sqrt{\pi t}} \sum_{\boldsymbol{\alpha}\in \mathscr S^\ast} \sum_{0\leq \boldsymbol{\ell} \leq \frac{kp}{\delta}-1} e^{-2\pi i\frac{h}{k}Q(\boldsymbol{\ell}+\boldsymbol{\alpha})} \sum_{n_1\geq 0} \frac{B_{2n_1+1} \left(\frac{\delta(\ell_1+\alpha_1)}{kp}\right)}{(2n_1+1)!}\\\times \int_0^\infty \left(\mathcal G_5^{(2n_1,0)}(0,x_2)-\widetilde{\mathcal G}_5^{(2n_1,0)} (0,x_2)\right)dx_2 \left(\frac{kp}{\delta}\right)^{2n_1-1}t^{n_1-\frac12}.
\end{multline*}
Now
\begin{align*}
\mathcal{G}_5^{(2n_1, 0)}\left(0, x_2\right)-\widetilde{\mathcal{G}}_5^{(2n_1, 0)}\left(0, x_2\right)=2\mathcal{G}_{5, 1}^{(2n_1, 0)}\left(0, x_2\right),
\end{align*}
where $\mathcal G_{5,1}(\boldsymbol{x}) := -\frac{1}{2\sqrt{2}}e^{-\frac{3x_1^2}{2}-3x_1x_2-x_2^2}$.
We thus need to compute
\begin{align*}
&2\int_0^\infty \mathcal{G}_{5, 1}^{(2n_1, 0)}\left(0, x_2\right)dx_2=-\frac1{\sqrt{2}}\left[\frac{\partial^{2n_1}}{\partial x_1^{2n_1}}e^{\frac{3x_1^2}{4}}\int_0^\infty e^{-\left(x_2+\frac32 x_1\right)^2}dx_2\right]_{x_1=0}
\\
&=-\frac{\sqrt{\pi}}{2\sqrt{2}}\left[\frac{\partial^{2n_1}}{\partial x_1^{2n_1}}\left(e^{\frac{3x_1^2}{4}}\left(1-E\left(\frac{3x_1}{2\sqrt{\pi}}\right)\right)\right)\right]_{x_1=0}
=-\frac{\sqrt{\pi}}{2\sqrt{2}}\left[\frac{\partial^{2n_1}}{\partial x_1^{2n_1}}e^{\frac{3x_1^2}{4}}\right]_{x_1=0}.
\end{align*}
This term then contributes as
\begin{equation}\label{leftover}
\frac{1}{\sqrt{2}}\sum_{\boldsymbol{\alpha} \in \mathscr S^\ast}\sum_{0\leq \boldsymbol{\ell}\leq \frac{kp}{\delta}-1}e^{-2\pi i\frac{h}{k}Q(\boldsymbol{\ell}+\boldsymbol{\alpha})} \sum_{n_1\geq 0}
\frac{B_{2n_1+1}\left(\frac{\delta(\ell_1+\alpha_1)}{kp}\right)}{(2n_1+1)!}
\left[\frac{\partial^{2n_1}}{\partial x_1^{2n_1}} e^{\frac{3x_1^2}{4}}\right]_{x_1=0}\left(\frac{kp}{\delta}\right)^{2n_1-1} t^{n_1-1}.
\end{equation}

The final term in the Euler-Maclaurin summation formula is
\begin{multline*}
 \frac2{\sqrt{\pi t}}\sum_{\boldsymbol{\alpha}\in \mathscr S^\ast}\eta(\boldsymbol{\alpha})\sum_{0\leq\boldsymbol{\ell}\leq \frac{kp}{\delta}-1}e^{-2\pi i\frac{h}{k} Q(\boldsymbol{\ell}+\boldsymbol{\alpha})}
\sum\limits_{n_1, n_2\geq 0\atop{n_1\not\equiv n_2\pmod{2}}}\frac{B_{n+1}\left(\frac{\delta\left(\ell_1+\alpha_1\right)}{kp}\right)}{\left(n_1+1\right)!}
\frac{B_{n+2}\left(\frac{\delta\left(\ell_2+\alpha_2\right)}{kp}\right)}{\left(n_2+1\right)!}\\
\times\left(\mathcal{G}_5^{(n_1, n_2)}(\boldsymbol{0})+(-1)^{n_1+1}\widetilde{\mathcal{G}}_5^{(n_1, n_2)}(\boldsymbol{0})\right)\left(\frac{kp}{\delta}\sqrt{t}\right)^{n_1+n_2}.
\end{multline*}
It is easy to see that under the condition $n_1\not\equiv n_2\pmod{2}$ we have
\[
\mathcal{G}_5^{(n_1, n_2)}(\boldsymbol{0})+(-1)^{n_1+1}\widetilde{\mathcal{G}}_5^{(n_1, n_2)}(\boldsymbol{0})=0.
\]
Next, we consider
\[
H_3\left(\frac{h}{k}+\frac{it}{2\pi}\right)=\frac1{2\sqrt{2\pi t}}\sum_{\beta\in\left\{\frac1p, 1-\frac1p\right\}}\sum_{0\leq r\leq\frac{kp}{\delta}-1}
e^{-2\pi i\frac{h}{k}(r+\beta)^2}\sum_{m\in\frac{\delta(r+\beta)}{kp}+\N_0}\mathcal{F}_2\left(\frac{kp}{\delta}\sqrt{t}m\right).
\]
The Euler-Maclaurin main term is
\begin{align*}
\frac{\delta}{2k pt\sqrt{2\pi }}\mathcal{I}_{\mathcal{F}_2}\sum_{\beta\in\left\{\frac1p, 1-\frac1p\right\}}\sum_{r\pmod{\frac{kp}{\delta}}}e^{-2\pi i\frac{h}{k}(r+\beta)^2}
=\frac{\delta}{kpt\sqrt{2\pi }}\mathcal{I}_{\mathcal{F}_2}\sum_{r\pmod{\frac{kp}{\delta}}}e^{-2\pi i\frac{h}{k}\left(r+\frac1p\right)^2}.
\end{align*}
The final term is
\[
\frac1{\sqrt{2\pi t}}\sum_{0\leq r\leq\frac{kp}{\delta}-1}e^{-2\pi i\frac{h}{k}\left(r+\frac1p\right)^2}\sum_{m\geq 0}
\frac{B_{2m+2}\left(\frac{\delta\left(r+\frac1p\right)}{kp}\right)}{(2m+2)!}\mathcal{F}_2^{(2m+1)}(0)=0
\]
since $\mathcal{F}_2$ is an even function.

Collecting all growing terms gives
\begin{equation}\label{mainat}
\frac{\delta}{kpt}\sum_{r\pmod{\frac{kp}{\delta}}}e^{-2\pi i\frac{h}{k}\left(r+\frac1p\right)^2}
\left(\mathcal{I}_{\mathcal{G}_3(0, \cdot)-\widetilde{\mathcal{G}}_3(0, \cdot)}+\mathcal{I}_{\mathcal{G}_4}+\frac1{\sqrt{\pi}}
\mathcal{I}_{\mathcal{G}_5(0, \cdot)-\widetilde{\mathcal{G}}_5(0, \cdot)}+\frac{\mathcal{I}_{\mathcal{F}_2}}{\sqrt{2\pi}}\right).
\end{equation}
We compute $\mathcal I_{\mathcal{F}_2}=\frac{\sqrt{\pi}}{2}$ and, using integration by parts,
\begin{align*}
\mathcal{I}_{\mathcal{G}_4} &= \int_{0}^{\infty}xe^{x^2}M^*\left(\sqrt{\frac{2}{\pi}}x\right)dx=-\frac{M^*(0)}{2}-\sqrt{\frac{2}{\pi}}\int_{0}^{\infty}e^{-x^2}dx=\frac12-\frac1{\sqrt{2}}
\end{align*}
by conjugating \eqref{sumsmatch}. Moreover, \eqref{want} gives
\begin{align*}
\mathcal{I}_{\mathcal{G}_3(0, \cdot)-\widetilde{\mathcal{G}}_3(0, \cdot)}&-\int_0^\infty x_2e^{-x_2^2}dx_2+\frac1{\sqrt{2}}=\frac12\left[e^{-x_2^2}\right]_0^\infty+\frac1{\sqrt{2}}
=-\frac12+\frac1{\sqrt{2}},\\
\mathcal{I}_{\mathcal{G}_5(0, \cdot)-\widetilde{\mathcal{G}}_5(0, \cdot)}&=-\frac{\sqrt{\pi}}{2\sqrt{2}}.
\end{align*}
Thus the term inside the paranthesis in \eqref{mainat} vanishes.

We are left to show that the contributions from \eqref{computediff} and \eqref{leftover} vanish. For this it suffices to show
that, for all $n\in\N$, 
\begin{equation*}
\sum_{\boldsymbol{\alpha}\in\mathscr{S}^\ast}\sum_{0\leq \boldsymbol{\ell}\leq\frac{kp}{\delta}-1}e^{-2\pi i\frac{h}{k}Q(\boldsymbol{\ell}+\boldsymbol{\alpha})}B_{2n+1}\left(\frac{\delta\left(\ell_1+\alpha_1\right)}{kp}\right)=0.
\end{equation*}

As in (\ref{splitsum}) we get that this sum is zero for $\frac{p}{\delta} \notin \{ 1,2 \}$.
Next we consider $\frac{p}\delta =1$. We first combine the first and third element in $\mathscr S^\ast$. Using \eqref{claimquad} and
\begin{align}\label{oddminus}
B_{2m+1}(1-x) = -B_{2m+1}(x)
\end{align}
gives that these cancel. Thus we need to show that
\begin{align}\label{wantvanish}
\sum_{0\leq \ell<k} B_{2n+1}\left(\frac{\ell_1}k\right)e^{-2\pi i\frac{h}k Q\left(\ell_1,\ell_2+1-\frac1p\right)}=0.
\end{align}
We use \eqref{onlysecondterm} and distinguish again whether $k$ is even or odd. If $k$ is odd we do the same change of variables and use \eqref{oddminus} to obtain that \eqref{wantvanish} equals
$$
B_{2n+1}(0) \sum_{\ell_2\pmod k}e^{-2\pi i\frac{h}k\left(\ell_2+1-\frac1p\right)^2}=0
$$
since for $m\geq 3$ odd, $B_m(0) =0$.

If $k$ is even, then we obtain
$$
B_{2n+1}(0) \sum_{\ell_2\pmod k} e^{-2\pi i\frac{h}k \left(\ell_2+1-\frac1p\right)^2}+B_{2n+1}\left(\frac12\right) \sum_{\ell_2\pmod k} e^{-2\pi i\frac{h}k \left(\ell_2+1-\frac1p\right)^2} =0
$$
since for $m$ odd $B_m(\frac12) =0$.

We next turn to the case $\frac{p}\delta =2$. Then only the second element survives and we want
\begin{equation}\label{alsowant}
\sum_{0\leq \boldsymbol{\ell}\leq 2k-1} B_{2n+1}(0) e^{-2\pi i\frac{h}k \left(\ell_1,\ell_2+1-\frac1p\right)} = 0.
\end{equation}
We obtain for the left-hand side of \eqref{alsowant}
$$
\left(B_{2n+1}(0)+B_{2n+1}\left(\frac12\right)\right) \sum_{\ell_2 \pmod{k}} e^{-2\pi i\frac{h}k \left(\ell_2 +1-\frac1p\right)^2}=0.
$$
This finally proves the theorem.


\end{proof}

\subsection{Proof of Theorem \ref{maintheorem}}
We are now ready to prove a refined version of Theorem \ref{maintheorem}.

\begin{theorem}\label{quantumref}
	\begin{enumerate}[leftmargin=*]
	
		\item[\textnormal{(1)}] The function $\widehat{F}_1:\Q\to\C$ defined by $\widehat{F}_1(\frac{h}{k}):=F_1(e^{2\pi i\frac{ph}{k}})$ is a depth two quantum modular form of weight one for $\Gamma_p$ with multiplier $(\frac{-3}{d})$.
		
		\item[\textnormal{(2)}] The function $\widehat{F}_2:\mathcal \Q \to \C$ defined by $\widehat{F}_2(\frac{h}{k}):=F_2(e^{2\pi i\frac{ph}{k}})$ is a depth two quantum modular form of weight two for $\Gamma_p$ with multiplier $(\frac{3}{d})$.
		
	\end{enumerate}
\end{theorem}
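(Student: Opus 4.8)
The plan is to deduce the quantum modularity of $\widehat F_1$ and $\widehat F_2$ from the transformation laws already proved for the Eichler integrals $\mathcal E_1$ and $\mathcal E_2$, using the asymptotic matching of Theorems \ref{asagree} and \ref{matchas2} to bridge the false theta functions and their companions. First I would record that, by Theorem \ref{asagree}, the radial expansion of $\mathbb E_1(\tau)=\mathcal E_1(\frac\tau p)$ at every rational $\frac hk$ coincides, after $h\mapsto -h$ and $t\mapsto -t$, with the expansion $F_1(e^{2\pi i\frac hk-t})\sim\sum_m a_{h,k}(m)t^m$. Since all coefficients agree, the difference between $\widehat F_1$ and $\mathbb E_1$ along the vertical line above $\frac hk$ vanishes to infinite order, hence is flat at $\frac hk$. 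As $\widehat F_1(\frac hk)=F_1(e^{2\pi i\frac{ph}{k}})$ is exactly the radial limit of $\mathcal E_1$ at $\frac hk$ (the sign flip $h\mapsto -h$ being absorbed by the conjugation, using $\Gamma_p^\ast=\Gamma_p$), this shows that on $\Q$ the functions $\widehat F_1$ and $\mathcal E_1$ are interchangeable modulo a function extending real-analytically across $\frac hk$. The identical reasoning applies to $\widehat F_2$ and $\mathbb E_2(\tau)=\mathcal E_2(\frac\tau p)$ via Theorem \ref{matchas2}.

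Next I would form the modularity cocycle of $\widehat F_1$ for $M=\left(\begin{smallmatrix}a&b\\c&d\end{smallmatrix}\right)\in\Gamma_p$ and, invoking the flatness just established, replace $\widehat F_1$ by $\mathcal E_1$ up to a real-analytic error. Proposition \ref{quantE} then identifies the resulting cocycle with $\sum_{j=1}^{12}(r_{f_j,g_j,\frac dc}+I_{f_j}r_{g_j,\frac dc})$, where $f_j,g_j\in S_{\frac32}$. Each $r_{f_j,g_j,\frac dc}$ is real-analytic on $\R$ by Theorem \ref{quantheorem} (the cuspidal case), so it lies in $\mathcal O(R)=\mathcal Q_\kappa^0\,\mathcal O(R)$; each $I_{f_j}$ is the Eichler integral of a weight $\frac32$ cusp form, hence an element of $\mathcal Q_{\frac12}=\mathcal Q_{\frac12}^1$ by Lemma \ref{lquant}, while $r_{g_j,\frac dc}$ is again real-analytic, so $I_{f_j}r_{g_j,\frac dc}\in\mathcal Q_{\frac12}^1\,\mathcal O(R)$. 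Thus the cocycle lies in $\mathcal O(R)+\mathcal Q_{\frac12}^1\,\mathcal O(R)$, whose maximal depth is $N_j=1$; by Definition \ref{defgen} this exhibits $\widehat F_1$ as a depth two quantum modular form of weight one with multiplier $\left(\frac{-3}{d}\right)$ for $\Gamma_p$, with quantum set all of $\Q$ since the radial expansions exist at every rational. For $\widehat F_2$ the same argument via Proposition \ref{Derivativetrans} yields $\sum_{j=1}^{18}(r_{f_j,g_j,\frac dc}+I_{f_j}r_{g_j,\frac dc})$ with $f_j,g_j$ of weight $\frac12$ or $\frac32$; the Eichler integrals $I_{f_j}$ are again depth one quantum modular forms (now of weight $\frac32$ or $\frac12$), giving weight two, multiplier $\left(\frac3d\right)$, and depth two.

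The hard part will be the rigorous justification of the first paragraph: that agreement of the two asymptotic expansions to all orders genuinely allows substituting $\mathcal E_j$ for $\widehat F_j$ inside the cocycle with only a flat, hence real-analytic, discrepancy. This is the delicate Zagier-type step—one must argue that $F_j$ and its Eichler companion differ by a $C^\infty$ function vanishing to infinite order at each $\frac hk\in\Q$, and that this property is stable under the weight-$k$ slash action, so that the discrepancy is absorbed into $\mathcal O(R)$. Two bookkeeping points also need care: tracking the scaling $\mathbb E_j(\tau)=\mathcal E_j(\frac\tau p)$ alongside the normalization $\widehat F_j(\frac hk)=F_j(e^{2\pi i\frac{ph}{k}})$ so that the acting group is precisely $\Gamma_p$ (here $\Gamma_p^\ast=\Gamma_p$ is essential), and verifying that the errors $r_{f_j,g_j,\frac dc}$ extend real-analytically across a suitable open $R\subseteq\R$, which is ensured by the cuspidality hypotheses of Theorem \ref{quantheorem} (when a $g_j$ is merely a holomorphic weight $\frac12$ form, one takes $R$ avoiding the single point $-\frac dc$).
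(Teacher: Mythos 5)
Your proposal follows the paper's own proof: the asymptotic matching of Theorems \ref{asagree} and \ref{matchas2} plus the transformation formulas of Propositions \ref{quantE} and \ref{Derivativetrans}, with the cocycle landing in $\mathcal O(R)+\mathcal Q_{\kappa}^{1}(\Gamma_p,\chi)\mathcal O(R)$ exactly as in your second paragraph, and with the reflection absorbed by $\Gamma_p^{\ast}=\Gamma_p$. The only correction concerns the ``hard part'' you defer at the end: it is not needed, and the paper never performs it. Theorem \ref{quantumref} asserts quantum modularity in the sense of Definition \ref{defgen}, which is a condition on \emph{values} on $\Q$ alone; full asymptotic expansions are relevant only for the \emph{strong} variant mentioned in the remark following that definition. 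The $m=0$ case of Theorems \ref{asagree} and \ref{matchas2} gives the exact identities $\widehat{F}_1\bigl(\frac hk\bigr)=a_{hp_1,\frac{k}{p_2}}(0)$ and $\widehat{F}_2\bigl(\frac hk\bigr)=b_{hp_1,\frac{k}{p_2}}(0)$, where $p_1:=p/\gcd(k,p)$ and $p_2:=\gcd(k,p)$, and these constants are in turn the radial limits of $\mathcal E_1$ and $\mathcal E_2$ at the rational point $-\frac hk$. Hence on $\Q$ each $\widehat{F}_j$ \emph{equals} the reflected boundary values of $\mathcal E_j$---the discrepancy is identically zero, not merely flat---so the cocycle of $\widehat{F}_j$ under $M\in\Gamma_p$ is literally the cocycle of $\mathcal E_j$ under $M^{\ast}$ evaluated at $-x$, and Propositions \ref{quantE} and \ref{Derivativetrans} apply verbatim. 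In particular, no argument about infinite-order vanishing, nor about its stability under the slash action, is required; that issue would arise only if one were proving the strong quantum modularity of $\widehat{F}_j$, which is not what the theorem claims.
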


\begin{proof}
		(1) We have, by Theorem \ref{asagree},
		$$
		\widehat{F}_1\left(\frac{h}{k}\right) = \lim_{t\to 0^+} F_1\left(e^{2\pi i\frac{ph}{k}-t}\right)=a_{hp_1,\frac{k}{p_2}}(0) = \lim_{t\to 0^+}\mathbb E_1\left(-\frac{h}{k}+\frac{it}{2\pi}\right),
		$$
		where $p_1:= p/\gcd(k,p)$, $p_2:=\gcd(k,p)$. Proposition \ref{quantE} then gives the claim.

		(2)  Theorem \ref{matchas2} gives 		$$
		\widehat{F}_2\left(\frac{h}{k}\right)= \lim_{t\to 0^+} {F}_2\left(e^{2\pi i\frac{ph}{k}-t}\right) = b_{hp_1,\frac{k}{p_2}}(0) = \lim_{t\to0^+}\mathbb E_2\left(-\frac{h}{k}+\frac{it}{2\pi}\right).
		$$
		Proposition \ref{Derivativetrans} then gives the claim.
\end{proof}

\begin{remark} For odd $d$, we have that $(\frac{3}{d})=(\frac{-3}{d})=1$ if and only if $d \equiv 1 \pmod{12}$ so that both $F_1$ and $F_2$ can be viewed as quantum modular forms with the trivial character under a suitable subgroup of $ \Gamma_p$ (e.g. the principal congruence subgroup $\Gamma(12p)$).
\end{remark}

\section{Completed indefinite theta functions}

In this section, we embed the double Eichler integrals in a modular context by viewing them as ``purely non-holomorphic'' parts of indefinite theta series.

\subsection{Weight one	} \label{Completion:weightone}

The functions $E_2$ and $M_2$ were introduced in \cite{ABMP}, where they played a crucial role in understanding modular indefinite theta functions of signature $(j,2)$ ($j\in\N_0$). We consider the quadratic form $Q_1(\boldsymbol{n}):=\frac12\boldsymbol{n}^TA_1\boldsymbol{n}$ and the bilinear form $B_1(\boldsymbol{n},\boldsymbol{m}):=\boldsymbol{n}^TA_1\boldsymbol{m}$ given by
$
A_1:=\left(\begin{smallmatrix}
6 & 3 & 6 & 3 \\
3 & 2 & 3 & 2 \\
6 & 3 & 0 & 0 \\
3 & 2 & 0 & 0
\end{smallmatrix}\right),
$
and define $A_0:=\left(\begin{smallmatrix}
6&3\\3&2
\end{smallmatrix}\right)$, $P_0(\boldsymbol{n}):=M_2(\sqrt{3};\sqrt{3}\left(2n_1+n_2\right),n_2)$ and, for $\boldsymbol{n}\in\R^4$, set
\begin{align*}
P(\boldsymbol{n})&:=M_2\left(\sqrt{3}; \sqrt{3}(2n_3+n_4), n_4\right)
+\left(\sgn(2n_3+n_4)+\sgn(n_1)\right)
\left(\sgn(3n_3+2n_4)+\sgn(n_2)\right)
\\&\quad +
\left(\sgn\left(n_4\right)+\sgn\left(n_2\right)\right)M_1\left(\sqrt{3}(2n_3+n_4)\right)
+
\left(\sgn\left(n_3\right)+ \sgn\left(n_1\right)\right)M_1\left(3n_3+2n_4\right).
\end{align*}
Note that, for $\boldsymbol{\alpha}\in\mathscr{S}^\ast$,
\begin{align*}
2\mathcal E_{1,\boldsymbol{\alpha}}(\tau)=\Theta_{-A_0,P_0,\boldsymbol{\alpha}}(\tau).
\end{align*}
We view this function as ``purely non-holomorphic" part of the indefinite theta function
\begin{equation}\label{defineTheta}
\Theta_{A_1,P,\boldsymbol{a}}(\tau)= \sum_{\boldsymbol{n}\in \boldsymbol{a}+\Z^4}
P\left(\sqrt{v}\boldsymbol{n}\right)q^{Q_1\left(\boldsymbol{n}\right)},
\end{equation}
where $\boldsymbol{a}\in \frac1pA_1^{-1}\Z^4$ with $(a_3,a_4)=(\alpha_1,\alpha_2)$. One can either employ Section 4.3 of \cite{ABMP} or proceed directly (as we do here) to prove the following proposition.

\begin{proposition}\label{Prop:CompletionOne}
	Assume that $\boldsymbol{a}\in \frac1p A_1^{-1}\Z^4$ with $a_1,a_2,a_4\not\in \Z$.
	{\normalfont
		\begin{enumerate}[leftmargin=*]
			\item
			{\it We have
				\begin{align*}
					\Theta_{A_1,P^-,\boldsymbol{a}}(\tau)
					=2\mathcal E_{1,(a_3,a_4)}(\tau)
					\Theta_{A_0,1,(a_1-a_3,a_2-a_4)}(\tau)
					,
				\end{align*}
				where
				\begin{equation*}
					P^-(\boldsymbol{m}):= M_2\left(\sqrt{3};\sqrt{3}\left(2n_3+n_4\right),n_4\right).
				\end{equation*}
			}
			\item
			{\it The functions $\Theta_{A_1,P,\boldsymbol{a}}$ and $\Theta_{A_0,P_0,(a_3,a_4)}$ converge absolutely and locally uniformly.}
			
			\item
			{\it The function $\tau\mapsto\Theta_{A_1,P,a}(p\tau)$ transforms like a modular form of weight two for some subgroup of $\SL_2(\Z)$ and some character.}
			
		\end{enumerate}
	}
	
\end{proposition}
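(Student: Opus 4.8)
The plan is to deduce part (3) from Vign\'eras' theorem (Theorem \ref{Vigneras}) applied to the symmetric $4\times 4$ matrix $A_1$ and the weight function $P$, and then to transfer the resulting modularity to $\tau\mapsto\Theta_{A_1,P,\boldsymbol a}(p\tau)$ by a scaling argument. Since $A_1$ is non-singular of signature $(2,2)$, Theorem \ref{Vigneras} produces modularity of weight $\lambda+\frac{m}{2}=\lambda+2$, so the target weight two forces the choice $\lambda=0$. The absolute and locally uniform convergence demanded in the hypothesis of Theorem \ref{Vigneras} is precisely part (2) of the present proposition, so that input is already in hand and I only need to verify the two structural conditions on $P$.

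First I would check the integrability condition (1) of Theorem \ref{Vigneras}. The decisive feature is that $P$ decays like a Gaussian along exactly those directions in which the indefinite form $Q_1$ fails to be negative, so that $P(\boldsymbol w)e^{\pi Q_1(\boldsymbol w)}$ together with its second-order derivatives and its products with polynomials of degree at most two lie in $L^1(\R^4)\cap L^2(\R^4)$. Concretely, the $M_2$- and $M_1$-pieces decay like Gaussians by \eqref{MG} and the radial asymptotics \eqref{limitsM2}, while each sign combination $\sgn(2n_3+n_4)+\sgn(n_1)$, $\sgn(3n_3+2n_4)+\sgn(n_2)$, $\sgn(n_4)+\sgn(n_2)$, and $\sgn(n_3)+\sgn(n_1)$ vanishes outside an appropriate half-space. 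Combining these gives the required membership; this is exactly the type of bookkeeping familiar from Vign\'eras', Zwegers', and the ABMP treatment of indefinite theta functions.

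The heart of the matter is the Vign\'eras differential equation $(\mathcal D-\frac{1}{4\pi}\Delta_{A_1^{-1}})P=0$. Exploiting the block structure $A_1=\left(\begin{smallmatrix}A_0&A_0\\A_0&0\end{smallmatrix}\right)$ one computes $A_1^{-1}=\left(\begin{smallmatrix}0&A_0^{-1}\\A_0^{-1}&-A_0^{-1}\end{smallmatrix}\right)$, which makes $\Delta_{A_1^{-1}}$ fully explicit in the variables $(n_1,n_2)$ and $(n_3,n_4)$. I would then differentiate $P$ term by term, using the first partial derivatives of $M_2$ recorded in \eqref{diffM2} and \eqref{middM1}, the identity $E'(u)=2e^{-\pi u^2}$, and the distributional derivatives of the $\sgn$-factors. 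The correction terms built from products of $\sgn$'s and single $M_1$'s in the definition of $P$ are designed precisely so that the non-eigenfunction contributions created by differentiating the bare function $M_2(\sqrt 3;\sqrt3(2n_3+n_4),n_4)$ -- in particular the delta-type boundary terms from the signs and the leftover Gaussian pieces -- cancel, leaving $(\mathcal D-\frac{1}{4\pi}\Delta_{A_1^{-1}})P=0$. This is exactly the verification carried out in the general framework of \cite[Section 4.3]{ABMP}, which one may either invoke directly or reproduce in this special case. I expect this differential-equation computation to be the main obstacle: tracking the sign-derivative terms and confirming that they are annihilated by the tailored correction terms is the only genuinely delicate point, everything else reducing to decay estimates.

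Finally, once $\Theta_{A_1,P,\boldsymbol a}$ is known to transform with weight two for some congruence subgroup and character, the rescaled function $\tau\mapsto\Theta_{A_1,P,\boldsymbol a}(p\tau)$ transforms with the same weight for the group obtained by conjugating by $\left(\begin{smallmatrix}p&0\\0&1\end{smallmatrix}\right)$ and intersecting with $\SL_2(\Z)$ (explicitly, those $M=\left(\begin{smallmatrix}a&b\\c&d\end{smallmatrix}\right)$ with $p\mid c$), which yields the claimed statement with $\lambda=0$ giving weight two.
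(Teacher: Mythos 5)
Your plan for part (3) contains a genuine gap: Theorem \ref{Vigneras} cannot be applied to $P$ itself, because $P$ is not even continuous. Owing to the double null limit (the vectors $(1,0,0,0)^T$ and $(0,1,0,0)^T$ are isotropic for $A_1^{-1}$), the kernel $P$ contains bare products of sign functions, and these do \emph{not} combine with the $M_1$- and $M_2$-terms into anything smooth. Concretely, the only $n_1$-dependence of $P$ sits in its second and fourth summands, so across the hyperplane $\{n_1=0\}$ one finds, using $M_1=M=E-\sgn$,
\begin{equation*}
P\big|_{n_1=0^+}-P\big|_{n_1=0^-}
=2\left(\sgn(3n_3+2n_4)+\sgn(n_2)\right)+2M(3n_3+2n_4)
=2\sgn(n_2)+2E(3n_3+2n_4),
\end{equation*}
which is nonvanishing away from a codimension-two set (and similarly across $\{n_2=0\}$). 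Hence $P$ cannot solve the Vign\'eras equation classically, and condition (1) of Theorem \ref{Vigneras} fails as well: second derivatives of $P$ are measures supported on hyperplanes, not functions in $L^1\cap L^2$. Your assertion that the delta-type terms produced by differentiating the $\sgn$-factors are cancelled by the correction terms is therefore false; those corrections are exactly what makes the jumps nonzero. The paper's proof never differentiates $P$: using Lemma \ref{limitsE2} it writes $P=\lim_{\varepsilon\to0}\widehat P_\varepsilon$, where each $\widehat P_\varepsilon$ is a finite sum of honest $E_2$-functions with linear arguments, each real-analytic and satisfying the Vign\'eras equation with $\lambda=0$; Theorem \ref{Vigneras} is applied for each fixed $\varepsilon$, and the modular transformation law is then passed to the pointwise limit $\varepsilon\to0$.

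Two further points. First, the rescaling in the paper is not cosmetic: Theorem \ref{Vigneras} is applied to $\Theta_{pA_1,\widehat P_{\varepsilon,p},\boldsymbol a}$ with $\widehat P_{\varepsilon,p}(\boldsymbol n):=\widehat P_\varepsilon(\sqrt p\,\boldsymbol n)$, precisely because $\boldsymbol a\in\tfrac1p A_1^{-1}\Z^4=(pA_1)^{-1}\Z^4$ is integral for the rescaled lattice; this is how the argument $p\tau$ is handled, rather than by first getting modularity at $\tau$ and conjugating the group afterwards. Second, your proposal addresses only part (3): part (2), the absolute and locally uniform convergence, is itself one of the assertions of Proposition \ref{Prop:CompletionOne} and cannot be quoted as ``already in hand'' (the paper proves it via the decay \eqref{limitsM2} for the $M_2$-part and, for the pure sign part, a Gaussian bound extracted from a Laplace expansion of the Gram determinant of $\boldsymbol n,\boldsymbol b_1,\dots,\boldsymbol b_4$); and part (1), the identity $\Theta_{A_1,P^-,\boldsymbol a}=2\mathcal E_{1,(a_3,a_4)}\Theta_{A_0,1,(a_1-a_3,a_2-a_4)}$, is not addressed at all (in the paper it follows from the lattice substitution $(n_1,n_2,n_3,n_4)\mapsto(n_1-n_3,n_2-n_4,n_3,n_4)$ together with Lemma \ref{thm:thetasum}).
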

\begin{remark}
	When considering indefinite theta functions of signature $(j,2)$, one usually obtains four $M_2$-terms as the purely ``non-holomorphic" part. The arguments of these four $M_2$-functions are dictated by the holomorphic part. The fact that $(1,0,0,0)^T$ and $(0,1,0,0)^T$ (which correspond to $n_1$ and $n_2$ occuring in $P$) have norm zero with respect to $A_1^{-1}$ causes the ``missing" $M_2$-terms to vanish. Therefore we refer to this situation as a \emph{double null limit} (see \cite{ABMP}).
\end{remark}
\begin{proof}[Proof of Proposition \ref{Prop:CompletionOne}]
	(1)
	Shifting $(n_1,n_2,n_3,n_4)\mapsto (n_1-n_3,n_2-n_4,n_3,n_4)$ on the left hand side of the identity gives the claim.

	(2) For $\Theta_{-A_0,P_0,(a_3,a_4)}$ we employ the
	asymptotic given in \eqref{limitsM2}, to obtain
	\begin{align*}
	&\left\lvert
	M_2\left(\sqrt{3};\sqrt{3v}\left(2n_1+n_2\right),\sqrt{v}n_2\right)q^{\frac12  \boldsymbol{n}^TA_0\boldsymbol{n}}\right\rvert
	\leq \frac{e^{-\pi \left(3(2n_1+n_2)^2+n_2^2\right)v}}{\pi^2 n_1n_2}e^{\pi  \boldsymbol{n}^TA_0\boldsymbol{n}v}
	\\  &\qquad \leq
	c_1e^{-2\pi  \boldsymbol{n}^TA_0\boldsymbol{n} v}e^{\pi  \boldsymbol{n}^TA_0\boldsymbol{n} v}
	=
	c_1e^{-\pi  \boldsymbol{n}^TA_0\boldsymbol{n} v}
	\end{align*}
	for some $c_1\in\R^+$ and $(n_1,n_2)\in (a_3,a_4)+\Z^2$ with $n_1\neq 0$. By plugging in the definition, one can show that for some $c_2\in\R^+$ and $n=(0,n_2)\in (a_3,a_4)+\Z^2$
	\begin{align*}
		\left\lvert
		M_2\left(\sqrt{3};\sqrt{3v}n_2,\sqrt{v}n_2\right)q^{\frac12  \boldsymbol{n}^TA_0\boldsymbol{n}}\right\rvert
		\leq 	c_2e^{-\pi  \boldsymbol{n}^TA_0\boldsymbol{n} v}.
	\end{align*}
	Using that $A_0$ is positive definite, we obtain, for some $c_3\in\R^+$
	\begin{align*}
	\sum_{\boldsymbol{n}\in (a_3,a_4)+\Z^2} \left\lvert
	M_2\left(\sqrt{3};\sqrt{3v}\left(2n_1+n_2\right),\sqrt{v}n_2\right)q^{\frac12 \boldsymbol{n}^T A_0\boldsymbol{n}}\right\rvert
	\leq c_3
	\sum_{\boldsymbol{n}\in (a_3,a_4)+\Z^2} e^{-\pi  \boldsymbol{n}^TA_0\boldsymbol{n}v }  <\infty,
	\end{align*}
	implying the absolute and locally uniform convergence of $\Theta_{-A_0,P_0,(a_3,a_4)}$. Combining this with (1) and the convergence of the positive definite theta series $\Theta_{A_1,1,(a_1-a_3,a_2-a_4)}$, we obtain absolute and locally uniform convergence of the $M_2$-part of $\Theta_{A_1,P,\boldsymbol{a}}$.
	
	For the part containing only sign-terms
	\begin{align}\label{signseries}
	\sum_{\boldsymbol{n}\in \boldsymbol{a}+\Z^4}
	\left(\sgn(2n_3+n_4)+\sgn(n_1)\right)
	\left(\sgn(3n_3+2n_4)+\sgn(n_2)\right)
	q^{Q_1(\boldsymbol{n})},
	\end{align}
	we consider the determinant of  $\Delta_{A_1}(\boldsymbol{n},\boldsymbol{b}_1,\boldsymbol{b}_2,\boldsymbol{b}_3,\boldsymbol{b}_4)$, where $\left(\Delta_M(\boldsymbol{v}_1,\dots,\boldsymbol{v}_5)\right)_{j,\ell}:=\boldsymbol{v}_j^T M\boldsymbol{v}_\ell$ and
	\begin{align*}
	(\boldsymbol{b}_1,\boldsymbol{b}_2,\boldsymbol{b}_3,\boldsymbol{b}_4):=\frac13
	\begin{pmatrix}
	0 & 0 & 1 & 0 \\
	0 & 0 & 0 & 3 \\
	2 & -3& -1& 0 \\
	-3& 6 & 0 & -3
	\end{pmatrix}.
	\end{align*}
	We compute the determinant $\det(\Delta_{A_1}(\boldsymbol{n},\boldsymbol{b}_1,\boldsymbol{b}_2,\boldsymbol{b}_3,\boldsymbol{b}_4))$ via Laplace expansion to obtain
	\begin{align*}
	e^{-\pi v Q_1(\boldsymbol{n})}&\leq
	e^{-\pi  \left(\frac{15}{16}B_1(\boldsymbol{b}_1,\boldsymbol{n})^2+\frac{2}{9}B_1(\boldsymbol{b}_2,\boldsymbol{n})^2+B_1(\boldsymbol{b}_1,\boldsymbol{n})B_1(\boldsymbol{b}_3,\boldsymbol{n})+2B_1(\boldsymbol{b}_2,\boldsymbol{n})B_1(\boldsymbol{b}_4,\boldsymbol{n})
		\right)v}
	\\
	&\leq
	e^{-c_4 \left(B_1(\boldsymbol{b}_1,\boldsymbol{n})^2+B_1(\boldsymbol{b}_2,\boldsymbol{n})^2+\lvert B_1(\boldsymbol{b}_3,\boldsymbol{n})\rvert +\lvert B_1(\boldsymbol{b}_4,\boldsymbol{n})\rvert
		\right)v}
	\end{align*}
	with some $c_4\in\R^+$ for all $\boldsymbol{n}\in \boldsymbol{a}+\Z^4$ which satisfy the condition
	\begin{align*}
	\left(\sgn(2n_3+n_4)+\sgn(n_1)\right)
	\left(\sgn(3n_3+2n_4)+\sgn(n_2)\right)\neq 0.
	\end{align*}
	Thus \eqref{signseries} is dominated by
	\begin{align*}
	& \sum_{\boldsymbol{n}\in \boldsymbol{a}+\Z^4}\left\lvert \left(\sgn(2n_3+n_4)+\sgn(n_1)\right)
	\left(\sgn(3n_3+2n_4)+\sgn(n_2)\right)
	e^{-\pi  Q_1(\boldsymbol{n})v}
	\right\rvert\\
	&\qquad \leq4  \sum_{\boldsymbol{n}\in \boldsymbol{a}+\Z^4}
	e^{-c_4 \left(B_1(\boldsymbol{b}_1,\boldsymbol{n})^2+B_1(\boldsymbol{b}_2,\boldsymbol{n})^2+\lvert B_1(\boldsymbol{b}_3,\boldsymbol{n})\rvert +\lvert B_1(\boldsymbol{b}_4,\boldsymbol{n})\rvert
		\right)v} <\infty.
	\end{align*}
	
	To deal with the contribution of the third and fourth summand of $P$ one combines the approaches of the two previous terms.

	(3) 	We use Lemma \ref{limitsE2} to rewrite $P$ as a limit of $E_2$-functions, namely
	\begin{align*}
	P(\boldsymbol{n})=&\lim_{\varepsilon\to 0}\widehat{P}_\varepsilon(\boldsymbol{n}),
	\end{align*}
	where
	\begin{align*}
	\widehat{P}_\varepsilon(\boldsymbol{n}) :=&\Bigg(
	E_2\left(\frac{\varepsilon}{3}; \sqrt{3}\left(2n_3+n_4\right), -\varepsilon\left(n_1+n_3+\frac{n_4}{\sqrt{3}}\right)+\frac{3n_2}{\varepsilon\left(2\sqrt{3}-3\right)} \right)
	\\&+
	E_2\left(\frac{\varepsilon}{2}; \left(3n_3+2n_4\right),\frac{3n_1}{\varepsilon\left(2\sqrt{3}-3\right)}- \varepsilon\left(n_2+\sqrt{3}n_3+n_4\right) \right)
	+E_2\left(\sqrt{3};\sqrt{3}\left(2n_3+n_4\right),n_4\right) \\
	&+
	E_2\left(-\sqrt{3}; \frac{n_2}{2\varepsilon}-\frac{\varepsilon}{2}\left(n_2+2n_4\right),\frac{\sqrt{3}}{2\varepsilon}\left(2n_1+n_2\right)-\frac{\sqrt{3}}{2}\varepsilon\left(2n_1+n_2+4n_3+2n_4\right) \right)
	\Bigg).
	\end{align*}
	One can then verify that each occuring term $E_2(\kappa; b^Tn,c^Tn)$ satisfies the Vign\'eras differential equation given in Theorem \ref{Vigneras} with $\lambda=0$ and $A=A_1$. A straightforward calculation shows that the Vign\'eras differential equation is satisfied for $\widehat{P}_{\varepsilon}$ with respect to $A_1$ if and only if it is satisfied for $\widehat{P}_{\varepsilon,p}(\boldsymbol{n}):=\widehat{P}_{\varepsilon}(\sqrt{p}\boldsymbol{n})$ with respect to $pA_1$.
	Furthermore, we have
	\begin{align*}
	\Theta_{A_1,P,\boldsymbol{a}}(p\tau)&=\Theta_{pA_1,P_p,\boldsymbol{a}}(\tau)
	=
	\lim_{\varepsilon\to 0}\Theta_{pA_1,\widehat{P}_{\varepsilon,p},\boldsymbol{a}}(\tau)
	\end{align*}
	where $P_p(\boldsymbol{n}):=P(\sqrt{p}\boldsymbol{n})$. We can apply Theorem \ref{Vigneras} to obtain weight 2 modularity of $\Theta_{pA_1,\widehat{P}_{\varepsilon,p},\boldsymbol{a}}$ since $\boldsymbol{a}\in (pA_1)^{-1}\Z^4$. Now, taking the limit $\varepsilon\to0$ proves the claim.
	
\end{proof}

\subsection{Completion: weight two}
Similarly as in the previous Section \ref{Completion:weightone}, the function $\mathbb{E}_2$ may be related to a modular object of weight three. This connection becomes evident when writing $\mathbb{E}_2$ as a Jacobi derivative as in Lemma \ref{Eichler2}. We leave the details to the reader.

\subsection{Lowering}
%
The indefinite theta series considered in Subsection \ref{Completion:weightone} are higher depth harmonic Maass forms following Zagier-Zwegers. Roughly speaking, by this we mean that applying the {\it Maass lowering operator} $L:=-2iv^2\frac{\partial}{\partial \overline{\tau}}$ makes the function simpler. In particular, for the iterated Eichler integral, we have
$$
L\left(I_{f_1,f_2}(\tau)\right)= 2^{k_1}v^{k_1}f_1\left(-\overline{\tau}\right)I_{f_2}(\tau).
$$
Now $v^{k_1}f_1\left(-\overline{\tau}\right)$ is $v^{k_1}$ times a conjugated modular form of weight $k_1$ (so transforming of weight $-k_1$) and $I_{f_2}$, defined in (\ref{Eichler}), is the non-holomorphic part of an harmonic Maass form of weight $2-k_2$.

\section{Conclusion and further questions}

We conclude here with several comments and research directions

\begin{itemize}

\item[(1)] We plan to more systematically study  higher depth quantum modular forms and to describe explicitly the quantum $S$-modular matrix of $F(q)$. This requires a modification of several arguments used here for $F_2(q)$ (note that we restricted ourselves to $\Gamma_p$ out of necessity). This result would allow us to make a more precise connection between  $W(p)_{A_2}$ and its irreducible modules.
For one, we should be able to associate an $S$-matrix to the set of atypical irreducible $W(p)_{A_2}$-characters, in parallel to \cite{CM}.

\item[(2)] Iterated (or multiple) Eichler integrals studied in Section 5 are of independent interest.
As in other theories dealing with iterated integrals (e.g. non-commutative modular symbols, Chen's integrals and multiple zeta-values) shuffle relations are expected to play an important role.
Another goal worth pursuing is to connect  iterated Eichler integrals of half-integral weights to Manin's work  \cite{Manin}.

\item[(3)] We plan to investigate the asymptotic of $F(q)$ in terms of finite $q$-series evaluated at root of unity. This  requires certain hypergeometric type formulas for double rank two false theta functions.

\item[(4)] 

In recent work \cite{BKM} we found a new expression for the error of modularity appearing in Propositions \ref{quantE} and \ref{Derivativetrans}, at least if $M \tau=-\frac{1}{\tau}$.  Our formulae involve what we end up calling,  ``double Mordell" integrals.  
In the rank one case this connection is well-understood \cite[Theorem 1.16]{Zw}.

\item[(5)] Very recently, W. Yuasa  \cite{Wataru} gave an explicit formula for the  {\em  tail} of  $(2,2p)$-torus links associated to the sequence of colored Jones polynomials: $J_{n \omega_j}(K,q)$, $n \in \N$, where $\omega_j$, $j=1,2$ are the fundamental weights. We were able to identify
the same tail as a summand of $F(q)$, up to the factor $1-q$ (viz. extract the ``diagonal" $m_1=m_2$ in formula (\ref{thetaLie})). This raises the following question: Is it true that $F(q)$ is the tail of  $J_{n \rho}(K,q)$, ($n\in\N$) (here $\rho=\omega_1+\omega_2$), up to a rational function of $q$? For related computations of tails colored with $\frak{sl}_3$ representations see \cite{Garoufalidis}.

\end{itemize}

\end{document}